\newtheorem{theorem}{Theorem}[section]
\newtheorem{proposition}[theorem]{Proposition}
\newtheorem{lemma}[theorem]{Lemma}
\newtheorem{corollary}[theorem]{Corollary}
\theoremstyle{definition}
\newtheorem{definition}[theorem]{Definition}
\newtheorem{remark}[theorem]{Remark}
\theoremstyle{plain}
\numberwithin{equation}{section}
\newtheorem{theorem*}{Theorem}
\newtheorem{proposition*}[theorem*]{Proposition}
\newtheorem{corollary*}[theorem*]{Corollary}
\def \Z{\mathbb Z}
\newcommand{\Mod}{\mathrm{Mod}}
\newcommand{\PMod}{\mathrm{PMod}}
\newcommand{\UMod}{\mathrm{UMod}}
\newcommand{\Homeo}{\mathrm{Homeo}}
\newcommand{\LMod}{\mathrm{LMod}}
\newcommand{\SMod}{\mathrm{SMod}}
\renewcommand{\S}{\mathbb{S}}
\renewcommand{\L}{\mathcal{L}}
\newcommand{\I}{\mathcal{I}}
\newcommand{\C}{\mathcal{C}}
\newcommand{\Sp}{\mathrm{Sp}}
\newcommand{\SL}{\mathrm{SL}}
\newcommand{\Stab}{\mathrm{Stab}}
\numberwithin{equation}{subsection}
\begin{document}
	\title[Generating $\LMod$ of regular cyclic covers]{Generating the liftable mapping class groups \\of regular cyclic covers}

	\author[S. Dey]{Soumya Dey}
	\address{Krea University, 5655, Central Expressway, Sri City, Andhra Pradesh 517646 India}
	\email{soumya.sxccal@gmail.com}
	\urladdr{https://sites.google.com/site/soumyadeymathematics}
	
	\author[N. K. Dhanwani]{Neeraj K. Dhanwani}
	\address{Department of Mathematical Sciences\\
		Indian Institute of Science Education and Research Mohali\\
		Knowledge city, Sector 81, SAS Nagar, Manauli PO 140306}
	\email{neerajk.dhanwani@gmail.com}
	
	\author[H. Patil]{Harsh Patil}
	\address{Department of Mathematics\\
		University of Bristol\\
		Beacon House, Queens Road \\
		Bristol, BS8 1QU, UK}
	\email{cr22307@bristol.ac.uk}
	
	\author[K. Rajeevsarathy]{Kashyap Rajeevsarathy}
	\address{Department of Mathematics\\
		Indian Institute of Science Education and Research Bhopal\\
		Bhopal Bypass Road, Bhauri \\
		Bhopal 462 066, Madhya Pradesh\\
		India}
	\email{kashyap@iiserb.ac.in}
	
	\urladdr{https://home.iiserb.ac.in/$_{\widetilde{\phantom{n}}}$kashyap/}
	
	\subjclass[2020]{Primary 57K20, Secondary 57M60}
	
	\keywords{surface; regular covers; liftable mapping class; generators}
	\begin{abstract}
Let $\mathrm{Mod}(S_g)$ be the mapping class group of the closed orientable surface of genus $g \geq 1$, and let $\mathrm{LMod}_{p}(X)$ be the liftable mapping class group associated with a finite-sheeted branched cover $p:S \to X$, where $X$ is a hyperbolic surface. For $k \geq 2$, let $p_k: S_{k(g-1)+1} \to S_g$ be the standard $k$-sheeted regular cyclic cover. In this paper, we show that $\{\mathrm{LMod}_{p_k}(S_g)\}_{k \geq 2}$ forms an infinite family of self-normalizing subgroups in $\mathrm{Mod}(S_g)$, which are also maximal when $k$ is prime. Furthermore, we derive explicit finite generating sets for $\mathrm{LMod}_{p_k}(S_g)$ for $g \geq 3$ and $k \geq 2$, and $\mathrm{LMod}_{p_2}(S_2)$. For $g \geq 2$, as an application of our main result, we  also derive a generating set for $\mathrm{LMod}_{p_2}(S_g) \cap C_{\mathrm{Mod}(S_g)}(\iota)$, where $C_{\mathrm{Mod}(S_g)}(\iota)$ is the centralizer of the hyperelliptic involution $\iota \in \mathrm{Mod}(S_g)$. Let $\L$ be the infinite ladder surface, and let $q_g : \L \to S_g$ be the standard infinite-sheeted cover induced by $\langle h^{g-1} \rangle$ where $h$ is the standard handle shift on $\mathcal{L}$. As a final application, we derive a finite generating set for $\mathrm{LMod}_{q_g}(S_g)$ for $g \geq 3$.  
\end{abstract}
	
\maketitle
\section{Introduction}
\label{sec:intro}
	
Let $S_g$ be the closed connected orientable surface of genus $g \ge 1$, and let $\Mod(S_g)$ be the mapping class group  of $S_g$. Understanding the finite-index subgroups of $\Mod(S_g)$ and their homology groups is a natural pursuit (see \cite[\textsection 2 and \textsection 6]{DM1}). A special class of finite-index subgroups of $\Mod(S_g)$ arise as the liftable mapping class groups of finite-sheeted covers. The study of the liftable mapping class groups can be traced back to the seminal works of Birman-Hilden~\cite{BH1,BH2,BH3} in the 1970s. In the past decade,  there has also been some renewed interest~\cite{GW3,GW2,GW1} on the study of these groups.  The problem of liftability of periodic mapping classes under finite cyclic branched covers is tackled combinatorially in~\cite{RS1} using the theories of orbifolds and automorphisms of compact Riemann surfaces. For $k \ge 1$ and $g_k := k(g-1)+1$,  let $p_k : S_{g_k} \to S_g$ be the standard regular $k$-sheeted cover of $S_g$ induced by a $\Z_k$-action on $S_{g_k}$, as shown in Figure \ref{fig:regular_cover} below (for $k = 4$).  Let $\LMod_{p}(X)$ denote the liftable mapping class group of a finite-sheeted branched cover $p:S \to X$. Recently, in~\cite{ADDR21}, a symplectic characterization of elements in $\LMod_{p_k}(S_g)$ was derived. Furthermore, it was shown that for $g \geq 2$, $\LMod_{p_k}(S_g)$ is a generalization of the congruence subgroup $\Gamma_0(k) (= \LMod_{p_k}(S_1))$ of $\SL(2,\Z)$ which appears frequently in the theory of modular forms~\cite{HA,IO,TM,RS}. In this paper, we show that for $k \geq 2$, $\LMod_{p_k}(S_g)$ is a self-normalizing subgroup of $\Mod(S_g)$ which is maximal when $k$ is prime. Furthermore, we derive an explicit finite generating set for $\LMod_{p_k}(S_g)$ for $k \geq 2$ and $g \geq 3$, and also for the case when $(g,k) = (2,2)$.

	\begin{figure}[htbp]
		\labellist
		\small
		\pinlabel $a_1$ at 49 22
		\pinlabel $b_1$ at 70 80
		
		\pinlabel $c_1$ at 100 32
		
		\pinlabel $a_2$ at 118 22
		\pinlabel $b_2$ at 140 80
		
		\pinlabel $a_g$ at 320 22
		\pinlabel $b_g$ at 342 80
		
		\pinlabel $\pi/2$ at 410 360
		\pinlabel $p_4$ at 220 160
		
		\endlabellist
		\centering
		\includegraphics[width=40ex]{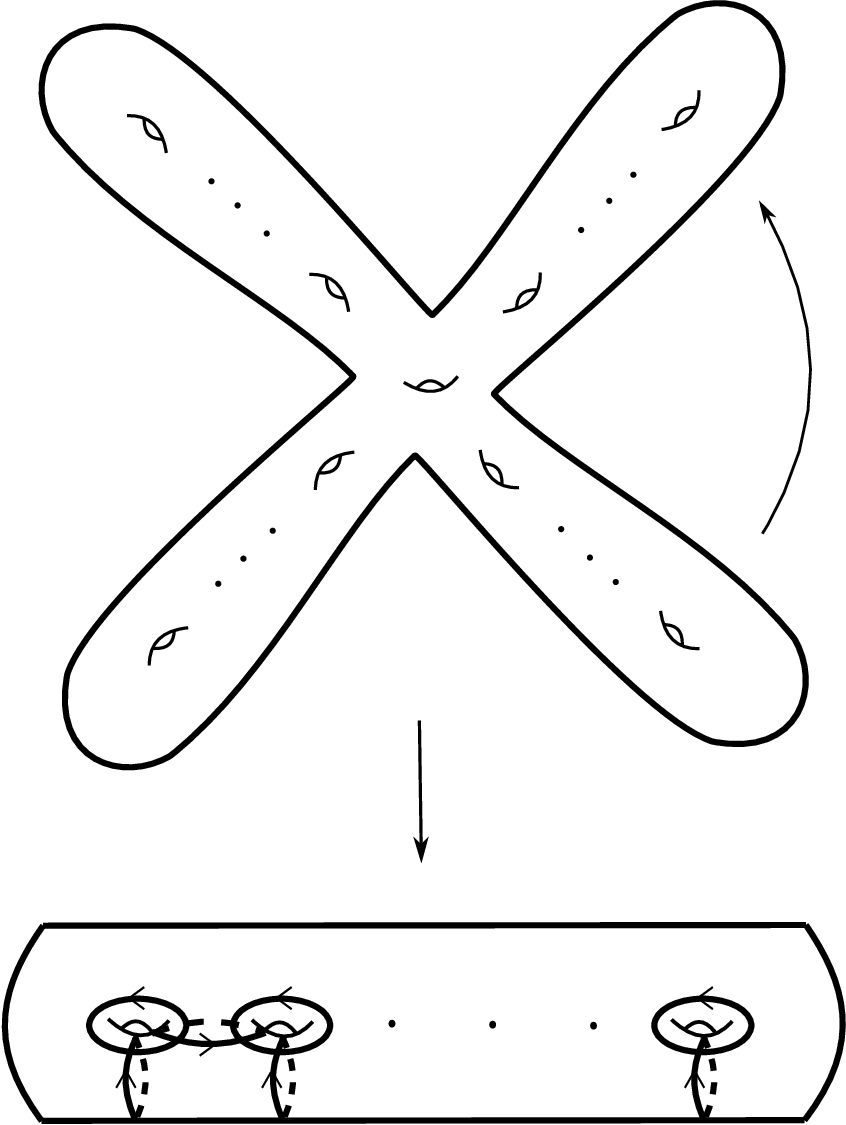}
		\caption{A free regular cover $p_4: S_{g_4} \to S_g$ induced by a $\Z_4$-action on $S_{g_{4}}.$}
		\label{fig:regular_cover}
	\end{figure}
	
For $k$ prime, it was shown in~\cite{ADDR21} that $[\Mod(S_g):\LMod_{p_k}(S_g)] = \sum_{i=0}^{2g-1} k^i$ and that $\LMod_{p_k}(S_g)$ is not a maximal subgroup of $\Mod(S_g)$ when $k$ is composite. Moreover, using the main result of \cite{BGP}, it can be shown that $\LMod_{p_2}(S_g)$ is a maximal subgroup of $\Mod(S_g)$. Thus, a natural question that arises in this context is whether $\LMod_{p_k}(S_g)$ is maximal when $k$ is prime. In Section~\ref{sec:alg_sig_lmod}, using the symplectic criterion derived in~\cite{ADDR21} we show the following. 
\begin{theorem*}
\label{alg_props_lmod}
For $g \geq 2$ and $k \geq 2$, we have:
\begin{enumerate}[(i)]
\item $\LMod_{p_k}(S_g)$ is self-normalizing in $\Mod(S_g)$, and
\item when $k$ is prime, $\LMod_{p_k}(S_g)$ is maximal in $\Mod(S_g)$. 
\end{enumerate}
\end{theorem*}
	
Let $\Psi: \Mod(S_g) \to \Sp(2g, \Z)$ be the symplectic representation induced by the action of $\Mod(S_g)$ on $H_1(S_g, \Z)$, that preserves the algebraic intersection number $\hat{i}(\alpha,\beta)$ between any two classes $\alpha, \beta \in H_1(S_g, \mathbb{Z})$. Let $\Psi_k$ denote the composition of the map $\Psi$ with the modulo $k$ reduction map $\Sp(2g, \Z) \to \Sp(2g, \Z_k)$, and let $\Mod(S_g)[k] := \ker(\Psi_k)$ (also known as the \textit{level-$k$ subgroup} of $\Mod(S_g)$). Let $\{a_1,b_1, \ldots,a_g,b_g \}$ be the standard generators of $H_1(S_g,\Z) (\cong \Z^{2g})$ represented by the oriented curves in Figure~\ref{fig:regular_cover} and the standard unit vectors $e_1, \ldots, e_{2g}$ in $\Z^{2g}$, respectively.

Note that $p_{k}$ induces an injective map $(p_{k})_*:\pi_1(S_{g_k}) \to \pi_1(S_g)$. In fact $(p_k)_*(\pi_1(S_{g_k}))$ is a normal subgroup of $\pi_1(S_g)$, which is the kernel of the map $\phi:\pi_1(S_g)\to \mathbb{Z}_k$ defined by $c\xmapsto{\phi} \hat{i}(a_1,c) \pmod k$, where $c$ on the right refers to the class it represents in $H_1(S_g, \mathbb{Z})$. Let $(p_k)_{\#}: H_1(S_{g_k},\Z) \to H_1(S_g,\Z)$ be the induced map on homology. By identifying $\pi_1(S_{g_k})$ with  $\ker \, \phi < \pi_1(S_g)$, we obtain $(p_k)_{\#} (H_1(S_{g_k},\Z))  = \langle a_1,b_1^k, a_2,b_2,\ldots,a_g,b_g\rangle$. Let $\Mod_{p_k}(S_g,e_1) := \Stab_{\Mod(S_g)}(e_1)$, where $e_1 = (1, 0,\ldots,0) \in H_1(S_g,\Z_k)$.

 In \cite{ADDR21}, it was shown that for $g \geq 2$, there exists a normal series in $\Sp(2g,\Z)$ given by
\[\tag{$\ast$} 1 \lhd \Psi(\Mod(S_g)[k]) \lhd \Psi(\Mod_{p_k}(S_g, e_1)) \lhd \Psi(\LMod_{p_k}(S_g))\]
that generalizes a well known~\cite[\textsection 4.2]{TM} normal series of congruence subgroups in $\SL(2,\Z)$.
Moreover, it was shown that the normal series (*) pulls back under $\Psi$ to an analogous series in $\Mod(S_g)$ with $[\LMod_{p_k}(S_g):\Mod_{p_k}(S_g,e_1)] = \phi(k)$, where $\phi$ is the Euler's totient function. Furthermore, a set of coset representatives of $\LMod_{p_k}(S_g)/\Mod_{p_k}(S_g, e_1)$ are given by 
$$S_k'' := \{T_{b_1}^{\bar{\ell}-1} T_{a_1}^{-1} T_{b_1}^{\ell -1} : \ell \in \Z_k^{\times} \text{ and } \ell \bar{\ell} \equiv 1 \pmod{k}\}.$$
\noindent Thus, our approach in this paper involves the derivation of a finite generating set for $\Mod_{p_k}(S_g,e_1)$. Again, from~\cite{ADDR21}, we know that $\Mod_{p_k}(S_1,e_1)$ is the congruence subgroup $\Gamma_1(k) < \SL(2,\Z)$, where
$$\Gamma_1(k) = \left\{ \begin{bmatrix} a & b \\ c & d \end{bmatrix} \in \SL(2,\Z) : a, d \equiv 1 \pmod{k}, \,c \equiv 0 \pmod{k}\right \}.$$
Let $T_c$ denote the left-handed Dehn twist about a simple closed curve $c$ in $S_g$. Given any finite generating set $\S_k$ for $\Gamma_1(k)$ (see \cite[Lemma 12]{DKMO} and \cite[Proposition 1.17]{WS})  expressed as a product of the matrices $\Psi(T_{a_1})$ and $\Psi(T_{b_1})$, consider a collection $\tilde{\S}_k$ of words (mapping classes) in $\Mod(S_1)$ in $\{T_{a_1},T_{b_1}\}$ such that $\Psi(\tilde{\S}_k) =\S_k$ and $|\S_k| = |\tilde{\S}_k|$. Our main result can be stated as follows.

\begin{theorem*}\label{theorem1}
For $k \geq 2$ and $g \geq 3$, 
$$\Mod_{p_k}(S_g,e_1) = \langle \ \tilde{\S}_k \cup \S_{g,k} \cup \{T_{a_2},T_{b_2}, \ldots,T_{a_g},T_{b_g},T_{c_1},\ldots,T_{c_{g-1}}\}\rangle,$$
where $\S_{g,k} = \{F_1,\ldots,F_{g-2}\}$ is a collection of bounding pair maps in the Torelli group $\I(S_g)$ such that for each $i$, $F_i$ corresponds to a bounding pair of genus $i$ (see Figure~\ref{fig:torelli_gens} below) that intersects the curve $a_1$ nontrivially. Consequently, 
$$\LMod_{p_k}(S_g) = \langle \ \S_k'' \cup \tilde{\S}_k \cup \S_{g,k} \cup \{T_{a_2},T_{b_2}, \ldots,T_{a_g},T_{b_g},T_{c_1},\ldots,T_{c_{g-1}}\}\rangle.
$$
\end{theorem*}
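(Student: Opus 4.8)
The plan is to study $\Mod_{p_k}(S_g,e_1)$ via the symplectic representation $\Psi:\Mod(S_g)\to\Sp(2g,\Z)$, whose kernel is the Torelli group $\I(S_g)$. Since the action on $H_1(S_g,\Z_k)$ factors through $\Psi$, the stabilizer surjects onto $G_k:=\Stab_{\Sp(2g,\Z)}(e_1)$, the group of integral symplectic matrices fixing $e_1$ modulo $k$, and restricting the exact sequence $1\to\I(S_g)\to\Mod(S_g)\xrightarrow{\Psi}\Sp(2g,\Z)\to 1$ gives
$$1\to\I(S_g)\to\Mod_{p_k}(S_g,e_1)\xrightarrow{\Psi}G_k\to 1.$$
Set $H:=\langle\tilde{\S}_k\cup\S_{g,k}\cup\{T_{a_2},T_{b_2},\ldots,T_{a_g},T_{b_g},T_{c_1},\ldots,T_{c_{g-1}}\}\rangle$. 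First I would check $H\le\Mod_{p_k}(S_g,e_1)$ by a routine intersection-number computation: the curves underlying $T_{a_i},T_{b_i}$ $(i\ge 2)$ and $T_{c_i}$ all have zero algebraic intersection with $a_1$, so these twists fix $[a_1]$; each word in $\tilde{\S}_k$ fixes $e_1$ modulo $k$ because its image lies in $\Gamma_1(k)$; and the $F_i$ lie in $\I(S_g)$. By the standard extension principle, if $H$ surjects onto $G_k$ and contains the kernel $\I(S_g)$ then $H=\Mod_{p_k}(S_g,e_1)$, so it suffices to prove (I) $\Psi(H)=G_k$ and (II) $\I(S_g)\le H$.

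For (I) I would work entirely inside $\Sp(2g,\Z)$. The transvections $\Psi(T_{a_i}),\Psi(T_{b_i}),\Psi(T_{c_i})$ for $i\ge 2$ generate the symplectic group of the last $g-1$ handles and fix $a_1,b_1$, the transvection $\Psi(T_{c_1})$ couples the first handle to the rest, and $\Psi(\tilde{\S}_k)$ generate the genus-one stabilizer $\Gamma_1(k)$ acting on $\langle a_1,b_1\rangle$. Given $M\in G_k$ I would run a column-reduction (Euclidean) argument: using the coupling and block transvections clear the $a_1$-column of $M$ to $e_1$, then peel off the first handle and induct on $g$, the base $g=1$ being precisely the hypothesis that $\Psi(\tilde{\S}_k)$ generates $\Gamma_1(k)$. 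The modulo-$k$ constraint, which lives only on the $a_1$-column, is exactly what forces the genus-one block into $\Gamma_1(k)$ rather than all of $\SL(2,\Z)$, explaining the appearance of $\tilde{\S}_k$ in place of $\{T_{a_1},T_{b_1}\}$.

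Step (II) is the crux. Since $g\ge 3$, Johnson's theorem provides a finite generating set of $\I(S_g)$ consisting of bounding pair maps. The subgroup $N:=H\cap\I(S_g)$ is normal in $H$ with $H/N\cong G_k$, and it contains the prescribed maps $F_1,\ldots,F_{g-1}$; equivalently, $N$ is normally generated in $H$ by the $F_i$ together with the Torelli elements obtained by lifting the defining relations of $G_k$ along the generators of part (I). The goal is to show this normal closure is all of $\I(S_g)$. Here the shape of $\S_{g,k}$ is essential: because $F_i$ is supported on a genus-$(i+1)$ bounding pair meeting $a_1$ nontrivially, conjugating the $F_i$ by the available twists and by $\tilde{\S}_k$ — whose images, by (I), sweep out all of $G_k$ — moves these bounding pairs through enough topological configurations to recover a Johnson generating set. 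Concretely I would realize each Johnson generator as a bounding pair map and express it as a product of $H$-conjugates of the $F_i$, tracking the $G_k$-action on the relevant homological splittings and using change-of-coordinates to place any bounding pair in standard position. Reconciling Johnson's non-constructive finite generation with the explicit conjugators available in $H$ is the main obstacle; the symplectic reduction in (I) is comparatively mechanical.

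Granting (I) and (II), we obtain $\Mod_{p_k}(S_g,e_1)=H$. The statement for $\LMod_{p_k}(S_g)$ is then immediate: by~\cite{ADDR21} the set $\S_k''$ is a transversal for $\Mod_{p_k}(S_g,e_1)$ in $\LMod_{p_k}(S_g)$, so every liftable class has the form $s\,m$ with $s\in\S_k''$ and $m\in\Mod_{p_k}(S_g,e_1)$; hence $\LMod_{p_k}(S_g)$ is generated by $\S_k''$ together with the generating set just produced for $\Mod_{p_k}(S_g,e_1)$.
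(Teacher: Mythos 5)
Your skeleton---split along $1 \to \I(S_g) \to \Mod_{p_k}(S_g,e_1) \xrightarrow{\Psi} G_k \to 1$ with $G_k := \Psi(\Mod_{p_k}(S_g,e_1))$, prove the image statement by column reduction with base case $\Gamma_1(k)$, show the kernel lies in $H$, then append the transversal $\S_k''$ from Theorem~\ref{thm:norm_series_gen}---is exactly the paper's (Theorems~\ref{thm1} and~\ref{thm:gen_kernel}). Your step (I) is the paper's argument in outline, though calling it ``comparatively mechanical'' undersells it: clearing the first column using only the allowed matrices while respecting the mod-$k$ constraints requires two number-theoretic lemmas (a vector $\mathcal{B}$ with $\gcd(\lambda_1,\lambda_2)=1$ and $\lambda_1 \equiv 1 \pmod{k}$, produced via Dirichlet's theorem, and B\'ezout coefficients with $\alpha_2 \equiv 0 \pmod{k}$, produced via pigeonhole). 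Still, that part is sound in spirit.

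Step (II) has a genuine gap. You propose to write each Johnson generator as a product of $H$-conjugates of the $F_i$, but you supply no mechanism for certifying that the conjugators lie in $H$: the change-of-coordinates principle produces a conjugator in $\Mod(S_g)$, and the fact that $\Psi(H) = G_k$ controls a candidate conjugator only modulo $\I(S_g)$---which is precisely the group you are trying to capture, so the argument is circular as stated. Your fallback, ``lifting the defining relations of $G_k$,'' presupposes a presentation of $G_k$ that neither you nor the paper possesses, and the paper never needs one: it proves outright that the full kernel $\I(S_g)$ lies in the candidate subgroup. The missing device is the stabilizer of $a_1$. Cutting along $a_1$ gives the exact sequence $1 \to \Z \to \Stab_{\Mod(S_g)}(a_1) \to \Mod(S_g - a_1) \to 1$, and since $\Mod(S_g - a_1)$ is generated by the twists $T_{c_1},\dots,T_{c_{g-1}}, T_{b_2},\dots,T_{b_g}, T_{a_{g-1}}, T_{a_g}$ already among your generators, one gets $\Stab_{\Mod(S_g)}(a_1) = \langle S, T_{a_1}\rangle \le H$ (with $T_{a_1}$ recovered via $\tilde{\S}_k$). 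This does two jobs at once that your sketch cannot do. First, the Johnson generators whose supporting subchains are \emph{disjoint} from $a_1$ are not handled by conjugating the $F_i$ at all---they preserve $a_1$, hence lie in $\Stab_{\Mod(S_g)}(a_1)$ and are directly products of the twist generators. Second, for generators whose subchains \emph{meet} $a_1$, prepending $a_1$ to both subchains shows the change-of-coordinates conjugator onto the corresponding $F_i$ can be chosen inside $\Stab_{\Mod(S_g)}(a_1) \le H$. Without this (or an equivalent way of manufacturing conjugators inside $H$), your step (II) does not close.
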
 
\begin{figure}[htbp]
		\includegraphics[width=50ex]{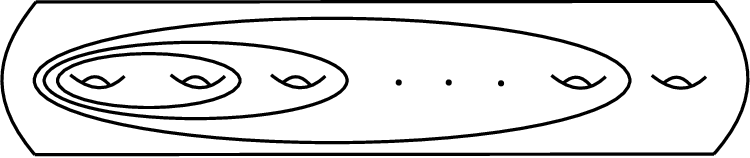}
		\caption{Each curve in the figure along with its image under the hyperelliptic involution $\iota$ forms a bounding pair that determines a map of type $F_i$.}
		\label{fig:torelli_gens}
	\end{figure}

\noindent The component groups of ($\ast$) (which are the congruence subgroups of the paramodular group $\Sp(2g,\Z)$) are of great importance in the theory of Siegel modular forms~\cite{HA,IO,RS}. Thus, it may be noted that the proof of our main result (see \textsection \ref{sec:gen_lmod}) also yields generating sets for the groups $\Psi(\LMod_{p_k}(S_g))$ and $\Psi(\Mod_{p_k}(S_g,e_1))$, for $g,k \geq 2$. Let $R_{g,k} \in \Mod(S_{g_k})$ be represented by the free $2\pi/k$ rotation of $S_{g_k}$ that induces the cover $p_k$. As an immediate application of Theorem~\ref{theorem1} we derive an explicit generating set for the centralizer of $R_{g,2}$ in $\Mod(S_{2g-1})$ for $g \geq 3$ (see Corollary~\ref{thm:cent_gen_set}). 

In the past decade, there has been a growing interest around mapping class groups of surfaces of infinite type, also known as \textit{big mapping class groups} (see \cite{AV20} and the references therein). Since infinite-type surfaces are inverse limits of finite-type surfaces, big mapping class groups are used to analyze the asymptotic and stable properties of mapping class groups of finite-type surfaces. In Section \ref{sec:ladder}, we consider the covering map $q_g : \mathcal{L} \to S_g$ induced by $\langle h^{g-1} \rangle$ where $h$ is the standard handle shift on the infinite ladder surface $\mathcal{L}$ (see Figure~\ref{fig:ladder}). We show that this cover satisfies the Birman-Hilden property~\cite{MW}, and establish the following. 
	\begin{theorem*}
For $g \geq 2$, let $\displaystyle \UMod(S_g) :=  \bigcap_{k \geq 2} \LMod_{p_k}(S_g)$ and let $\Mod(S_g,e_1)$ be the stabilizer subgroup of $\Mod(S_g)$ with respect to the vector $e_1 \in H_1(S_g,\Z)$. Then: 
\begin{enumerate}[(i)]
\item $\UMod(S_g) = \LMod_{q_g}(S_g)$, and consequently for $g \geq 3$, we have
		$$\UMod(S_g) = \langle \S_{g,k}  \cup \{\iota, T_{a_1},T_{a_2},T_{b_2}, \ldots,T_{a_g},T_{b_g},T_{c_1},\ldots,T_{c_{g-1}}\} \rangle.$$
\item There exists a normal series of $\UMod(S_g)$ in $\Mod(S_g)$ given by 
$$1 \lhd \I(S_g) \lhd \Mod(S_g,e_1) \lhd \UMod(S_g),$$ 
where the two distinct cosets of $\UMod(S_g)/\Mod(S_g,e_1)$ are represented by the elements in $\langle \iota \rangle$.
\end{enumerate}
\end{theorem*}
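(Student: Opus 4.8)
The plan is to treat the two assertions in turn, deriving (i) from the liftability criterion together with the main theorem, and (ii) from the homological action on $e_1$.

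For the equality $\UMod(S_g) = \LMod_{q_g}(S_g)$ in (i), I would first record that $q_g$ is the infinite cyclic cover of $S_g$ dual to a primitive class $\chi \in \mathrm{Hom}(H_1(S_g,\Z),\Z)$ with $\chi(b_1)=1$ and $\chi$ vanishing on the remaining generators, so that $\pi_1(\L) = \ker(\chi\colon \pi_1(S_g)\to\Z)$ and each $p_k$ arises from $q_g$ by reducing $\chi$ modulo $k$; in particular $\pi_1(\L) \leq \pi_1(S_{g_k})$ and $q_g = p_k\circ r_k$ for the intermediate cover $r_k\colon \L\to S_{g_k}$. Since the deck group of $q_g$ is $\Z$, a class $f\in\Mod(S_g)$ lifts under $q_g$ exactly when $f_*$ preserves $\ker\chi$, i.e. when $\chi\circ f_* = \pm\chi$; by the symplectic criterion of \cite{ADDR21}, $f\in\LMod_{p_k}(S_g)$ exactly when $\chi\circ f_*\equiv\lambda_k\,\chi\pmod{k}$ for some $\lambda_k\in\Z_k^\times$. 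The inclusion $\LMod_{q_g}(S_g)\subseteq\UMod(S_g)$ is then immediate by reduction mod $k$ (with $\lambda_k=\pm1$). For the reverse inclusion I would set $\psi:=\chi\circ f_*$ for $f\in\UMod(S_g)$; the hypotheses give $\psi\equiv\lambda_k\chi\pmod{k}$ for every $k$, which forces the coordinates $\psi(a_i)$ and $\psi(b_j)$ with $j\neq1$ to be divisible by every $k$, hence zero, so $\psi=\psi(b_1)\chi$; as $f_*$ is symplectic, $\psi$ is primitive, whence $\psi(b_1)=\pm1$ and $\chi\circ f_*=\pm\chi$. Thus $f\in\LMod_{q_g}(S_g)$, giving the equality.

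For the explicit generating set when $g\geq3$, I would reduce to describing $\Mod(S_g,e_1)$, the stabilizer of the integral vector $e_1=a_1$, and then adjoin $\iota$. Writing $\D:=\{T_{a_2},T_{b_2},\ldots,T_{a_g},T_{b_g},T_{c_1},\ldots,T_{c_{g-1}}\}$, I claim
$$\Mod(S_g,e_1) = \langle\, \{T_{a_1}\}\cup\S_{g,k}\cup\D\,\rangle =: G.$$
The inclusion $G\subseteq\Mod(S_g,e_1)$ is routine, since $T_{a_1}$ fixes $a_1$ homologically, $\S_{g,k}\subset\I(S_g)$ fixes all of $H_1$, and each twist in $\D$ lies in $\Mod_{p_k}(S_g,e_1)$ for every $k$ by the main theorem, hence fixes $e_1$ integrally. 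For the reverse inclusion I would use the exact sequence $1\to\I(S_g)\to\Mod(S_g,e_1)\to\Stab_{\Sp(2g,\Z)}(e_1)\to1$ induced by the symplectic representation: it suffices to show (A) $\I(S_g)\subseteq G$ and (B) the images of $T_{a_1}$ and of $\D$ generate $\Stab_{\Sp(2g,\Z)}(e_1)$. The key simplification in (B) is that the $\SL(2,\Z)$-stabilizer of $e_1$ over $\Z$ is the unipotent group $\langle\begin{bmatrix}1&1\\0&1\end{bmatrix}\rangle$ — the image of $T_{a_1}$ — so the family $\tilde\S_k$ (whose images generate $\Gamma_1(k)$) collapses to the single twist $T_{a_1}$. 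Granting (A) and (B), any $f\in\Mod(S_g,e_1)$ factors as $f=w\cdot t$, where $w$ is a word in $\{T_{a_1}\}\cup\D$ realizing the image $f_*\in\Stab_{\Sp(2g,\Z)}(e_1)$ and $t=w^{-1}f\in\I(S_g)\subseteq G$, so $f\in G$. Finally, part (ii) gives $[\UMod(S_g):\Mod(S_g,e_1)]=2$ with $\iota$ a nontrivial coset representative, so $\UMod(S_g)=\langle G,\iota\rangle$, which is precisely the asserted generating set.

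For (ii), the bottom of the series $1\lhd\I(S_g)\lhd\Mod(S_g,e_1)$ is immediate: $\I(S_g)$ is the kernel of the symplectic representation, hence normal in $\Mod(S_g)$, and it is contained in $\Mod(S_g,e_1)$ because Torelli elements fix $e_1$. For the top step I would define the character $\varepsilon\colon\UMod(S_g)\to\{\pm1\}$ by $f_*(e_1)=\varepsilon(f)\,e_1$, which is well defined by the equality in (i) and is a homomorphism; its kernel is exactly $\Mod(S_g,e_1)$, so $\Mod(S_g,e_1)\lhd\UMod(S_g)$, while $\varepsilon(\iota)=-1$ (as $\iota_*=-\mathrm{Id}$) together with $\iota^2=1$ shows $\varepsilon$ is surjective with the two cosets represented by $\langle\iota\rangle$, of index $2$. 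I expect the principal difficulty to lie in the two pillars (A) and (B) of the generating-set computation. Part (B) must be established over $\Z$ rather than modulo $k$, so one cannot simply quote $\Gamma_1(k)$-generation; proving that the transvections attached to $a_1$ and to the curves in $\D$ generate the full stabilizer $\Stab_{\Sp(2g,\Z)}(e_1)$ is the delicate arithmetic step. Part (A) is subtle because $G$ omits $T_{b_1}$, so I must verify that the bounding-pair maps $\S_{g,k}$, conjugated only by the available twists, still exhaust $\I(S_g)$; I would handle both by extracting and integralizing the corresponding steps of the main theorem's proof.
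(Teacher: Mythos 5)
Your proposal is correct and follows essentially the paper's own route: your dual class $\chi$ is just the paper's subgroup $V_g=(q_g)_{\#}(H_1(\L,\Z))$ described covariantly, your divisible-by-every-$k$ argument reproves the criterion quoted as Corollary~\ref{cor:umod_crit}, and your assembly of the generating set from the exact sequence $1\to\I(S_g)\to\Mod(S_g,e_1)\to\Stab_{\Sp(2g,\Z)}(e_1)\to1$ together with the index-two coset of $\iota$ is precisely how the paper combines Theorem~\ref{thm1}, Theorem~\ref{thm:gen_kernel} and Proposition~\ref{prop:lmod_qg}. Moreover, the two difficulties you flag are already in place: your step (A) is verbatim Theorem~\ref{thm:gen_kernel} (whose statement is integral, since $\ker(\Psi\vert_{\Mod_{p_k}(S_g,e_1)})=\I(S_g)$, and whose generating set omits $T_{b_1}$ exactly as you need), while your step (B) is an easier special case of the matrix reduction proving Theorem~\ref{thm1}, because any $A\in\Stab_{\Sp(2g,\Z)}(e_1)$ already has first column $e_1$ and, by symplecticity, second row $(0,1,0,\dots,0)$, so only the steps involving $M_2$, $M_3$ and $\Psi(T_{a_1})$ (all words in your proposed generators) are required.
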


Let $S_{g,n}$ denote the surface of genus $g$ with $n$ marked points. Let $\delta : S_g \to S_{0,2g+2}$ be the branched cover induced by the action of $\langle \iota \rangle$ on $S_g$, and let $C_{\Mod(S_g)}(\iota)$ be the centralizer of $\{\iota\}$ in $\Mod(S_g)$. The group $C_{\Mod(S_g)}(\iota)$ which is commonly known as the \textit{hyperelliptic mapping class group} (and formerly known as the \textit{symmetric mapping class group}~\cite{BH1,BH2,BH3}) has been widely studied~\cite{FRC,NK}. The techniques used in the proof of Theorem~\ref{theorem1} do not work for the case $g = 2$ since $\I(S_2)$ is not finitely generated~\cite{MM}. In Section~\ref{sec:lmod_p2_s2}, we use the well-known~\cite{BH3} epimorphism $\hat{\delta} : C_{\Mod(S_g)}(\iota) \to \Mod(S_{0,2g+2})$ induced by $\delta$ to show the following. 

\begin{theorem*}
For $g \geq 2$, we have: 
\begin{enumerate}[(i)]
\item $\LMod_{p_2}(S_g) \cap C_{\Mod(S_g)}(\iota) = \langle \iota, T_{a_1},T_{a_g}, T_{b_1}^2, T_{b_2}. \ldots T_{b_g}, T_{c_1}, \ldots, T_{c_{g-1}}\rangle$ and
\item $\LMod_{p_2}(S_2) = \langle T_{a_1}, T_{b_1}^2, T_{c_1}, T_{a_2}, T_{b_2} \rangle$.
\end{enumerate}
\end{theorem*}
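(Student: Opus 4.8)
The plan is to work through the Birman--Hilden correspondence. Since $\phi(2)=1$, the subgroup $\Mod_{p_2}(S_g,e_1)\lhd\LMod_{p_2}(S_g)$ has index $1$, so $\LMod_{p_2}(S_g)=\Stab_{\Mod(S_g)}(e_1)$ with $e_1=[a_1]\in H_1(S_g,\Z_2)$; thus liftability under $p_2$ is detected entirely by the action on $H_1(S_g,\Z_2)$. On the centralizer we have the exact sequence
$$1 \longrightarrow \langle\iota\rangle \longrightarrow C_{\Mod(S_g)}(\iota) \xrightarrow{\ \hat\delta\ } \Mod(S_{0,2g+2}) \longrightarrow 1,$$
which I will use to transfer the computation to the punctured sphere.

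For the inclusion $\supseteq$ in (i), I would first check that each proposed generator lies in the intersection. Every curve among $a_1,a_g,b_1,b_2,\dots,b_g,c_1,\dots,c_{g-1}$ is invariant under $\iota$, so the corresponding twists (and $T_{b_1}^2$) commute with $\iota$, while $\iota$ is central in $C_{\Mod(S_g)}(\iota)$. A direct computation of the homological action then shows that each of these curves meets $a_1$ an even number of times --- the only standard curve meeting $a_1$ oddly is $b_1$, which is precisely why one takes $T_{b_1}^2$ --- so each generator fixes $e_1$ in $H_1(S_g,\Z_2)$; and $\iota$ acts as $-I\equiv I\pmod 2$, hence fixes $e_1$ as well.

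The reverse inclusion is where the real work lies, and I would push it down through $\hat\delta$. Because $\iota$ acts trivially on $H_1(S_g,\Z_2)$, the action of $C_{\Mod(S_g)}(\iota)$ on $H_1(S_g,\Z_2)$ factors through $\hat\delta$; under the standard identification of $H_1(S_g,\Z_2)$ with even subsets of the $2g+2$ branch points modulo the full set, the class $e_1=[a_1]$ is the pair $\{p_1,p_2\}$ joined by the arc that $a_1$ double-covers. For $g\ge2$ the only even subset equivalent to $\{p_1,p_2\}$ is itself, so fixing $e_1$ is equivalent to $\hat\delta(f)$ preserving the unordered pair $\{p_1,p_2\}$, giving
$$\LMod_{p_2}(S_g)\cap C_{\Mod(S_g)}(\iota)=\hat\delta^{-1}\big(\Stab_{\Mod(S_{0,2g+2})}(\{p_1,p_2\})\big).$$
Realizing the chain of symmetric curves $a_1,b_1,c_1,b_2,\dots,c_{g-1},b_g,a_g$ as covering the consecutive arcs $p_1p_2,\dots,p_{2g+1}p_{2g+2}$, their twists map to the half-twists $\sigma_1,\dots,\sigma_{2g+1}$, and the listed generators map exactly to $\sigma_1,\sigma_2^2,\sigma_3,\dots,\sigma_{2g+1}$. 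The crux is then the generation lemma $\Stab_{\Mod(S_{0,2g+2})}(\{p_1,p_2\})=\langle\sigma_1,\sigma_2^2,\sigma_3,\dots,\sigma_{2g+1}\rangle$, which I expect to be the main obstacle: the permutation images generate $S_2\times S_{2g}$, the index-$\binom{2g+2}{2}$ subgroup of $S_{2g+2}$, and one must further show, via Reidemeister--Schreier on a half-twist presentation of $\Mod(S_{0,2g+2})$, that $\sigma_2^2$ together with the remaining $\sigma_i$ already generate the relevant pure mapping classes. Lifting a generating set of this stabilizer and adjoining $\iota$ (which generates $\ker\hat\delta$) yields (i).

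For (ii), I would use that for $g=2$ the hyperelliptic involution is central in $\Mod(S_2)$, so $C_{\Mod(S_2)}(\iota)=\Mod(S_2)$ and the intersection in (i) is all of $\LMod_{p_2}(S_2)$; specializing (i) gives $\LMod_{p_2}(S_2)=\langle\iota,T_{a_1},T_{a_2},T_{b_1}^2,T_{b_2},T_{c_1}\rangle$. It remains to show $\iota$ is redundant, i.e.\ $\iota\in G_0:=\langle T_{a_1},T_{b_1}^2,T_{c_1},T_{a_2},T_{b_2}\rangle$; since $\hat\delta(G_0)=\Stab_{\Mod(S_{0,6})}(\{p_1,p_2\})$ and $\ker\hat\delta=\langle\iota\rangle$, the index $[\LMod_{p_2}(S_2):G_0]$ is $1$ or $2$, and equals $1$ precisely when $\iota\in G_0$. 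In $\Mod(S_2)^{\mathrm{ab}}\cong\Z/10$ every nonseparating twist equals a generator $t$ and $\iota=5t$, so any word representing $\iota$ has odd total twist-exponent, while the non-liftable $T_{b_1}$ is available only through $T_{b_1}^2$. The delicate point of (ii) is therefore to produce and verify an explicit word for $\iota$ in the five liftable generators in which $T_{b_1}$ occurs an even number of times, thereby upgrading the inclusion $G_0\le\LMod_{p_2}(S_2)$ to an equality.
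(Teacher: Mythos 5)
Your outline retraces the paper's own route almost step for step (Birman--Hilden sequence, identification of $\LMod_{p_2}(S_g)\cap C_{\Mod(S_g)}(\iota)$ with the preimage of the stabilizer of the pair/class $\beta_1+\beta_2$ in $\Mod(S_{0,2g+2})$, the generating set $\sigma_1,\sigma_2^2,\sigma_3,\dots,\sigma_{2g+1}$, and centrality of $\iota$ for $g=2$), but the two steps you yourself label ``the main obstacle'' and ``the delicate point'' are precisely the content of the paper's proof, and you leave both unproven. For (i), no Reidemeister--Schreier computation is needed, and you do not carry one out: since $\Stab_{\Mod(S_{0,2g+2})}(\beta_1+\beta_2)$ contains $\ker\rho=\mathrm{PMod}(S_{0,2g+2})$, it is generated by $\mathrm{PMod}(S_{0,2g+2})$ together with lifts $\sigma_1,\sigma_3,\dots,\sigma_{2g+1}$ of generators of its image $\Sigma_2\times\Sigma_{2g}$; the pure mapping class group is generated by the twists $\tau_{ij}=(\sigma_{j-1}\cdots\sigma_{i-1})\sigma_i^2(\sigma_{j-1}\cdots\sigma_{i-1})^{-1}$, and every $\tau_{ij}$ visibly lies in $\langle\sigma_1,\sigma_2^2,\sigma_3,\dots,\sigma_{2g+1}\rangle$ except for the occurrence of $\sigma_2\sigma_1^2\sigma_2^{-1}$, which the braid relation resolves via $\sigma_2\sigma_1^2\sigma_2^{-1}=\sigma_1^{-1}\sigma_2^2\sigma_1$. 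Without this (or an equivalent) argument your part (i) is a reduction, not a proof.

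For (ii) the missing word for $\iota$ is supplied in the paper by citing that $\iota=(T_{a_1}T_{b_1})^3(T_{a_2}T_{b_2})^{-3}$ in $\Mod(S_2)$ and applying the braid relation once: $(T_{a_1}T_{b_1})^3=(T_{a_1}T_{b_1}^2)^2$, so $\iota=(T_{a_1}T_{b_1}^2)^2(T_{a_2}T_{b_2})^{-3}\in\langle T_{a_1},T_{b_1}^2,T_{c_1},T_{a_2},T_{b_2}\rangle$. Moreover, your abelianization sanity check is wrong and would have misled the search: in $H_1(\Mod(S_2))\cong\Z_{10}$ the hyperelliptic involution maps to $0$, not to $5t$ (the standard word $T_{c_5}T_{c_4}T_{c_3}T_{c_2}T_{c_1}^2T_{c_2}T_{c_3}T_{c_4}T_{c_5}$ has ten positive twists, and the word above has total twist exponent $6-6=0$); the element $(T_{c_1}\cdots T_{c_5})^3$, which does map to $5t$, is an involution but is \emph{not} $\iota$. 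Consequently the parity constraint you impose on candidate words (``odd total twist-exponent'') is spurious, and the correct word has even total exponent, with $T_{b_1}$ occurring only through $T_{b_1}^2$, exactly as needed.
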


\section{Algebraic significance of $\LMod_{p_k}(S_g)$ in $\Mod(S_g)$}
	\label{sec:alg_sig_lmod}
	Let $\Psi: \Mod(S_g) \to \Sp(2g, \Z)$ be the symplectic representation induced by the action of $\Mod(S_g)$ on $H_1(S_g, \Z)$, and let $\Psi_k$ denote the composition of the map $\Psi$ with the modulo $k$ reduction map $\Sp(2g, \Z) \to \Sp(2g, \Z_k)$. Let $\I(S_g)$ denote the Torelli group, which is the kernel of $\Psi$. Restricting $\Psi$ to $\Mod_{p_k}(S_g,e_1) = \Stab_{\Mod(S_g)}(e_1)$, where $e_1 = (1, 0,\ldots,0) \in H_1(S_g,\Z_k)$, we have a short exact sequence	$$1 \to \I(S_g) \to \Mod_{p_k}(S_g,e_1) \xrightarrow{\Psi} \Psi(\Mod_{p_k}(S_g,e_1)) \to 1.$$
	
	In \cite{ADDR21}, the following symplectic criteria for liftability of mapping classes under $p_k$ have been deduced, which we will use in this section. 
\begin{theorem}[{\cite[Theorem 2.2]{ADDR21}}]
\label{thm:symp_lift_crit}
Given an $f \in \Mod(S_g)$, the following statements are equivalent.
\begin{enumerate}[(i)]
\item $f \in \LMod_{p_k}(S_g)$.
\item $\Psi(f) = (d_{ij})_{2g \times 2g}$, where $k|d_{2i}$, for $1\leq i\leq 2g$ and $i \neq 2$, and $\gcd(d_{22},k)=1$.
\item $\Psi_k(f) = (e_{ij})_{2g \times 2g}$, where $e_{2i}=0$, for $1\leq i\leq 2g$ and $i \neq 2$, and $e_{22} \in \Z_k^{\times}$.
\item $f \in \Stab_{\Mod(S_g)}(\{\ell e_1:\ell \in \Z_k^{\times}\})$.
\end{enumerate}
\end{theorem}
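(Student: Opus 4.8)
The plan is to prove the cycle of equivalences by first establishing the genuinely topological step (i) $\Leftrightarrow$ (ii), and then obtaining (iii) and (iv) as purely linear-algebraic reformulations. Recall that $p_k$ is the regular cover classified by the surjection $\phi : \pi_1(S_g) \to \Z_k$ obtained by composing the Hurewicz map $\pi_1(S_g) \to H_1(S_g,\Z)$ with the homomorphism reading off the $b_1$-coordinate modulo $k$; its kernel is exactly the index-$k$ subgroup whose homological image is $\langle a_1,b_1^k,a_2,b_2,\ldots,a_g,b_g\rangle = (p_k)_{\ast}(\pi_1(S_{g_k}))$. Since $\ker\phi$ is normal, I would invoke the standard covering-space lifting criterion: a homeomorphism $f$ representing a class in $\Mod(S_g)$ lifts through $p_k$ if and only if $f_{\#}$ preserves $\ker\phi$. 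Because $f_{\#}$ is an automorphism and $\phi$ is surjective, the condition $f_{\#}(\ker\phi) = \ker\phi$ is equivalent to $\phi\circ f_{\#} = \alpha\circ\phi$ for some $\alpha \in \mathrm{Aut}(\Z_k)$, and every such $\alpha$ is multiplication by a unit $\ell \in \Z_k^{\times}$.

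Next I would transfer this to the symplectic matrix. As $\phi$ factors through $H_1$, the relation $\phi\circ f_{\#} = \ell\cdot\phi$ depends only on $\Psi(f) = (d_{ij})$ and states that the second coordinate (the $b_1 = e_2$ coordinate) of $\Psi(f)v$ is congruent to $\ell$ times the second coordinate of $v$ modulo $k$, for every $v \in \Z^{2g}$. Reading off the second row, this is precisely $k \mid d_{2i}$ for all $i \neq 2$ together with $d_{22} \equiv \ell \pmod{k}$; surjectivity of $\phi\circ f_{\#}$ forces $\ell \in \Z_k^{\times}$, i.e.\ $\gcd(d_{22},k)=1$. This is exactly statement (ii), yielding (i) $\Leftrightarrow$ (ii). Statement (iii) is then nothing but the reduction of (ii) modulo $k$ under $\Psi_k$, so (ii) $\Leftrightarrow$ (iii) is immediate.

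For (iii) $\Leftrightarrow$ (iv) I would exploit the symplectic form $\omega$ on $H_1(S_g,\Z_k)$, in which $e_1 = a_1$ and $e_2 = b_1$ are dual, so that $\omega(e_1,w)$ equals the second coordinate of $w$ for every $w$. Writing $A = \Psi_k(f)$, statement (iii) reads $(Av)_2 = e_{22}v_2$ for all $v$, i.e.\ $\omega(e_1,Av) = e_{22}\,\omega(e_1,v)$. Using the $\Sp$-invariance $\omega(e_1,Av) = \omega(A^{-1}e_1,v)$ and nondegeneracy of $\omega$, this is equivalent to $A^{-1}e_1 = e_{22}e_1$, that is $Ae_1 = e_{22}^{-1}e_1$ with $e_{22}^{-1} \in \Z_k^{\times}$. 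Thus $\Psi_k(f)$ carries $e_1$ to a unit multiple of itself, which is precisely the assertion that $f \in \Stab_{\Mod(S_g)}(\{\ell e_1 : \ell \in \Z_k^{\times}\})$; reversing each implication gives the converse.

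The hard part will be the careful justification of the lifting criterion at the level of mapping classes: handling basepoints and isotopy, and using normality of $\ker\phi$ so that conjugacy of the relevant subgroups collapses to equality (and so that the criterion descends to a well-defined condition on $\Mod(S_g)$). The secondary subtlety is the bookkeeping between the two distinguished homology classes, since the cover is governed by the $b_1 = e_2$ functional while (iv) is phrased in terms of the dual class $e_1 = a_1$; one must track the unit $\ell$ against its inverse $e_{22}^{-1}$ through the symplectic pairing to match the four statements exactly. Everything beyond these two points is routine linear algebra.
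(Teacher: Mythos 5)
Your argument is correct, but note first that this paper never proves the statement: it is imported verbatim from \cite[Theorem 2.2]{ADDR21}, so there is no in-paper proof to compare against. Your proposal is essentially the standard argument (and the one the cited source follows): the cover $p_k$ is classified by the surjection $\phi$ reading the $b_1$-coordinate mod $k$, the lifting criterion for the regular cover gives $f$ liftable iff $f_{\#}(\ker\phi)=\ker\phi$ (with the inclusion $f_{\#}(\ker\phi)\subseteq\ker\phi$ upgrading to equality automatically, since $\ker\phi$ has finite index and $f_{\#}$ is an automorphism), this is equivalent to $\phi\circ f_{\#}=\ell\,\phi$ for a unit $\ell$, which is exactly the second-row condition (ii); reduction mod $k$ gives (iii); and the symplectic duality $\omega(e_1,w)=w_2$ together with $\omega(e_1,Av)=\omega(A^{-1}e_1,v)$ and nondegeneracy of $\omega$ over $\Z_k$ converts the row condition into $Ae_1=e_{22}^{-1}e_1$, i.e.\ (iv). You correctly flag the two genuine subtleties: basepoint/isotopy bookkeeping, which is harmless precisely because $\ker\phi$ is normal and $\phi$ factors through homology, and the inversion of the unit when passing from the $b_1$-functional governing the cover to the dual class $e_1=a_1$ in (iv). One small observation you could add: in (ii) the condition $\gcd(d_{22},k)=1$ is in fact automatic from $k\mid d_{2i}$ for $i\neq 2$, since the second row of a symplectic integer matrix is primitive; your derivation of it from surjectivity of $\phi\circ f_{\#}$ is equally valid.
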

\noindent We will also require the following result which combines Theorem 3.1 and Corollary 3.3 from~\cite{ADDR21}. 

\begin{theorem}
\label{thm:norm_series_gen}
For $g \ge 1$ and $k \geq 2$, there exists a normal series of $\LMod_{p_k}(S_g)$ given by
$$ 1 \lhd \Mod(S_g)[k] \lhd \Mod_{p_k}(S_g, e_1) \lhd \LMod_{p_k}(S_g), \text{ where}$$ $$\LMod_{p_k}(S_g)/\Mod_{p_k}(S_g, e_1) \cong \Z_k^{\times} \text{ and }$$
$$[\Mod(S_g) : \Mod_{p_k}(S_g,e_1)] = | \{ \text{Primitive vectors in } \Z_k^{2g}\}|.$$ Moreover, the distinct cosets of $\LMod_{p_k}(S_g)/\Mod_{p_k}(S_g, e_1)$ are represented by the elements in $$\S_k'' := \{T_{b_1}^{\bar{\ell}-1} T_{a_1}^{-1} T_{b_1}^{\ell -1} : \ell \in \Z_k^{\times} \text{ and } \ell \bar{\ell} \equiv 1 \pmod{k}\}.$$
\end{theorem}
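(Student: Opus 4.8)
The plan is to deduce all three assertions — the normal series, the quotient $\Z_k^\times$, and the index — from the linear action of $\Mod(S_g)$ on $H_1(S_g,\Z_k)\cong\Z_k^{2g}$ via $\Psi_k$, combined with the liftability criterion of Theorem~\ref{thm:symp_lift_crit}. First I would set $\Mod(S_g)[k]:=\ker\Psi_k$ and record the chain of inclusions $\ker\Psi_k\subseteq\Mod_{p_k}(S_g,e_1)\subseteq\LMod_{p_k}(S_g)$. The first holds because an element acting trivially on $\Z_k^{2g}$ fixes $e_1$; the second holds because an element fixing $e_1$ also fixes each $\ell e_1$ (by linearity), hence stabilizes the set $\{\ell e_1:\ell\in\Z_k^\times\}$, so Theorem~\ref{thm:symp_lift_crit}(iv) applies.

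For the normal series, $\Mod(S_g)[k]$ is a kernel and is therefore normal in all of $\Mod(S_g)$, a fortiori in $\Mod_{p_k}(S_g,e_1)$. The substantive statement is $\Mod_{p_k}(S_g,e_1)\lhd\LMod_{p_k}(S_g)$, which I would establish by a conjugation computation on homology. Given $f\in\LMod_{p_k}(S_g)$, criterion (iv) forces $\Psi_k(f)(e_1)=\ell e_1$ for a unique $\ell\in\Z_k^\times$, since $e_1$ lies in the stabilized set, $f$ permutes that set, and primitivity of $e_1$ makes $\ell$ unique. Then for $h\in\Mod_{p_k}(S_g,e_1)$ one computes $\Psi_k(fhf^{-1})(e_1)=\Psi_k(f)\,\Psi_k(h)(\ell^{-1}e_1)=\Psi_k(f)(\ell^{-1}e_1)=e_1$, using that $h$ fixes $e_1$; hence $fhf^{-1}\in\Mod_{p_k}(S_g,e_1)$.

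The assignment $f\mapsto\ell$ just used defines a map $\chi:\LMod_{p_k}(S_g)\to\Z_k^\times$; I would check it is a homomorphism from the identity $\Psi_k(fg)(e_1)=\chi(g)\chi(f)\,e_1$ and identify $\ker\chi=\Mod_{p_k}(S_g,e_1)$ directly from the definition. Surjectivity of $\chi$ is the crux, and is exactly where the explicit family $\S_k''$ enters: I would verify, by an $\SL(2,\Z_k)$ matrix computation confined to the $\langle a_1,b_1\rangle$-block, that $\Psi_k\big(T_{b_1}^{\bar\ell-1}T_{a_1}^{-1}T_{b_1}^{\ell-1}\big)(e_1)=\ell e_1$, the relation $\ell\bar\ell\equiv1\pmod k$ being precisely what cancels the $b_1$-coordinate. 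This simultaneously shows $\S_k''\subseteq\LMod_{p_k}(S_g)$ and that $\chi$ attains every $\ell\in\Z_k^\times$. Since the values $\chi=\ell$ are pairwise distinct, the first isomorphism theorem yields $\LMod_{p_k}(S_g)/\Mod_{p_k}(S_g,e_1)\cong\Z_k^\times$ with $\S_k''$ a complete set of coset representatives.

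Finally, for the index I would apply the orbit–stabilizer theorem: by definition $\Mod_{p_k}(S_g,e_1)=\Stab_{\Mod(S_g)}(e_1)$, so $[\Mod(S_g):\Mod_{p_k}(S_g,e_1)]$ equals the cardinality of the $\Psi_k(\Mod(S_g))$-orbit of $e_1$. Invoking the classical surjectivity $\Psi_k(\Mod(S_g))=\Sp(2g,\Z_k)$ together with the standard transitivity of $\Sp(2g,\Z_k)$ on primitive vectors of $\Z_k^{2g}$ (of which $e_1$ is one), this orbit is exactly the set of all primitive vectors, giving the stated index. I expect the main obstacle to be the surjectivity of $\chi$ via the $\S_k''$ computation; the conjugation and orbit arguments are formal once the liftability criterion and the surjectivity and transitivity properties of $\Sp(2g,\Z_k)$ are in hand.
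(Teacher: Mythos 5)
Your proof is correct, but note that there is no in-paper proof to compare it against: the paper imports this statement wholesale from \cite{ADDR21} (it explicitly says the theorem ``combines Theorem 3.1 and Corollary 3.3'' of that paper) and never argues it. Your reconstruction uses exactly the toolkit the paper makes available, and every step checks out: criterion (iv) of Theorem~\ref{thm:symp_lift_crit} makes the character $\chi:\LMod_{p_k}(S_g)\to\Z_k^{\times}$ well defined (uniqueness of $\ell$ holding since $\ell e_1=(\ell,0,\dots,0)$ are pairwise distinct in $\Z_k^{2g}$), its kernel is $\Mod_{p_k}(S_g,e_1)$ by definition, and normality plus the isomorphism $\LMod_{p_k}(S_g)/\Mod_{p_k}(S_g,e_1)\cong\Z_k^{\times}$ follow once $\chi$ is onto. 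I verified your block computation: with either sign convention for the twists one gets
$$\bigl(T_{b_1}^{\bar{\ell}-1}T_{a_1}^{-1}T_{b_1}^{\ell-1}\bigr)_{\#}(a_1)=\ell\, a_1 \mp (\ell\bar{\ell}-1)\, b_1 \equiv \ell\, e_1 \pmod{k},$$
so the hypothesis $\ell\bar{\ell}\equiv 1\pmod k$ kills the $b_1$-coordinate exactly as you predicted; membership of these words in $\LMod_{p_k}(S_g)$ then follows by linearity, since $\Psi_k(f)(e_1)=\ell e_1$ forces $\Psi_k(f)(\ell' e_1)=\ell'\ell\, e_1$, which is criterion (iv). The orbit--stabilizer argument for the index is likewise sound, though the two facts you flag as standard would need citations in a written version: surjectivity of $\Psi_k$ onto $\Sp(2g,\Z_k)$, and transitivity of $\Sp(2g,\Z_k)$ on primitive vectors of $\Z_k^{2g}$ --- the latter is cleanly deduced from transitivity of $\Sp(2g,\Z)$ on primitive integral vectors together with the fact that every primitive vector mod $k$ lifts to a primitive vector in $\Z^{2g}$. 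The argument is uniform in $g\geq 1$ (for $g=1$ it degenerates to the $\Gamma_1(k)\lhd\Gamma_0(k)$ picture the paper alludes to), so nothing breaks at the edge case.
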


\subsection{Maximality of $\LMod_{p_k}(S_g)$}

In this subsection, we discuss the maximality of $\LMod_{p_k}(S_g)$ in $\Mod(S_g)$ when $k$ is prime. In the process, we also give a set of coset representatives of $\LMod_{p_k}(S_g)$ in $\Mod(S_g)$.  Let $G$ be a group and $H\subset G$ be a finite-index subgroup of $G$. Suppose $[G:H]=n$, and $\{x_i : 1\leq i\leq n\}$ be the left coset representatives. Let $H_i$ denote the left coset of $G/H$ given by $x_iH$. Without loss of generality, we assume that $x_1=1$ and $H_1=H$. We will require the following elementary group-theoretic lemma. 

\begin{lemma}\label{lem:imp_max}
Under the notations above, we have: 
\begin{enumerate}[(i)]
\item $\langle H, x_iH\rangle=\langle H,y\rangle$ for any $y\in x_iH$, and
\item if there exists some $y\in G$ such that $y\notin H\cup H_i$ but $y \in \langle H, H_i\rangle$, then $\langle H, H_j \rangle \subset \langle H, H_i \rangle$, where $y \in H_j$.
\end{enumerate}
\end{lemma}

We fix the following notation. Let $I_{2g}$ denote the identity matrix of order $2g$, and $E(i,j)$ denote the $2g \times 2g$ matrix whose $(i,j)$th entry is $1$ and all other entries are $0$. Let $P_{2i}=\Psi_k(T_{b_i}^{-1})=I_{2g}+E(2i,2i-1), \text{ and }$, $P_{2i+1}=\Psi_k(T_{c_i}^{-1}T_{a_i}T_{a_{i+1}})=I_{2g}+E(2i+1,2i)+E(2i-1,2i+2),$ and 
let $\mathcal{L}_g= \{T_{a_1},T_{b_1}, \ldots,T_{a_g},T_{b_g},T_{c_1},\ldots,T_{c_{g-1}}\},$ be the Lickorish generating set~\cite{WBRL} for $\Mod(S_g)$ (where the curves are as indicated in Figure~\ref{fig:regular_cover}).

\begin{theorem}
For $k$ prime and $g \geq 1$, $\LMod_{p_k}(S_g)$ is a maximal subgroup of $\Mod(S_g)$. 
\end{theorem}

\begin{proof}
Since $\Mod(S_g)[k] < \LMod_{p_k}(S_g)$, it suffices to show that $\Psi_k(\LMod_{p_k}(S_g))$ is maximal in $\Sp(2g;\Z_k)$. From Theorem~\ref{thm:symp_lift_crit}, the distinct cosets of $\Psi_k(\LMod_{p_k}(S_g))$ in $\Sp(2g,\Z_k)$ are in bijective correspondence with the set $$\{\{\ell v : \ell \in \Z_k^{\times} \}:  v \in H_1(S_g,\Z_k) \text{ is primitive}\}.$$ \noindent Moreover, each coset  is represented by a matrix that maps the vector $e_1 \in H_1(S_g,\Z_k)$ to some vector $\ell v$, where $\ell \in \Z_k^{\times}$. The distinct coset representatives are listed in Table~\ref{tab:coset_reps} below. 

\begin{table}[H]
\begin{tabular}{|l|l|l|}
\hline
Coset type & Primitive vector $v$  & Coset representative \\
\hline
$1$ & $(1,x_{1,2},\cdots,x_{1,2g})$ &  $P_2^{x_{1,2}-1}\Psi(T_{a_1}^{-x_{1,4}(x_{1,3}-1)})P_3^{x_{1,3}-1}P_4^{x_{1,4}-1}\cdots$ \\ 
&  & $\Psi(T_{a_{g-1}}^{-x_{1,2g}(x_{1,2g-1}-1)})P_{2g-1}^{x_{1,2g-1}-1}P_{2g}^{x_{1,2g}}P_{2g-1}P_{2g-2}\cdots P_{2}$ \\ \hline 

$2$ & $(0, 1,x_{2,3},\cdots,x_{2,2g})$ & $ \Psi(T_{a_1}^{-x_{2,4}(x_{2,3}-1)})P_3^{x_{2,3}-1}P_4^{x_{2,4}-1}\cdots \Psi(T_{a_{g-1}}^{-x_{2,2g}(x_{2,2g-1}-1)})$\\
& & $P_{2g-1}^{x_{2,2g-1}-1}P_{2g}^{x_{2,2g}}P_{2g-1}P_{2g-2}\cdots P_{3}\Psi(T_{b_1}T_{a_1}T_{b_1})^{-1}$ \\ \hline

$3$ & $(0,0,1,x_{3,4},\cdots,x_{3,2g})$ & $ P_4^{x_{3,4}-1} \Psi(T_{a_{2}}^{-x_{3,6}(x_{3,5}-1)})P_{5}^{x_{3,5}-1}P_{6}^{x_{3,6}-1} \cdots \Psi(T_{a_{g-1}}^{-x_{3,2g}(x_{3,2g-1}-1)})$ \\
& & $P_{2g-1}^{x_{3,2g-1}-1}P_{2g}^{x_{3,2g}}P_{2g-1}P_{2g-2}\cdots P_{4}\Psi(T_{a_1}T_{b_1}T_{c_1}T_{b_2}T_{a_2})^{-2}$ \\ \hline

$\vdots$ & $\vdots$ & $\vdots$ \\ \hline

$2g-1$ & $(0,0,\cdots,1,x_{2g-1,2g})$ & $P_{2g}^{x_{2g-1,2g}}\Psi(T_{a_1}T_{b_1}T_{c_1}T_{b_2}T_{c_2}\cdots T_{c_{g-1}}T_{b_g}T_{a_g})^{-2}$\\ \hline

$2g$ & $(0,0,\cdots,0,1)$ & $\Psi((T_{b_g}T_{a_g}T_{b_g})^{-1}(T_{a_1}T_{b_1}T_{c_1}T_{b_2}T_{c_2}\cdots T_{c_{g-1}}T_{b_g}T_{a_g})^{-2})$ \\ \hline 
\end{tabular}
\caption{The coset representatives of $\Psi_k(\LMod_{p_k}(S_g))$ in $\Sp(2g,\Z_k)$.}
\label{tab:coset_reps}
\end{table}
For $ 1\leq i \leq 2g$, let $g_{i,j}$ (for $1 \leq j \leq k^{2g-i}$) denote the distinct coset representatives of type $i$ with $g_{1,1}$ being the identity matrix. Let $H:=\Psi_k(\LMod_{p_k}(S_g))$ and let $H_{i,j} = \langle H, g_{i,j} \rangle$. In the view of Lemma~\ref{lem:imp_max}, it suffices to show that $\langle H, g_{i,j}\rangle = \Sp(2g,\Z_k)$ whenever $(i,j) \neq (1,1)$. Since $\Psi_k(h) \in H$, for each $h \in \mathcal{L}_g \setminus \{T_{b_1}\}$, this amounts to showing that $\Psi_k(T_{b_1})\in H_{i,j}$ for all $(i,j)\neq (1,1)$. 

First, we consider the case when $i = 2$. It is apparent that $H_{2,j}=\langle H, \Psi_k(T_{b_1}T_{a_1}T_{b_1})^{-1}\rangle,$ for each $j$. By the braid relation and fact that $\Psi(T_{a_1})\in H$, we have $\Psi({T_{b_1}})\in \langle H, g_{2,j}\rangle$ for each $j$.
 
Now we consider the case $i\geq 3$. It follows immediately that $$H_{i,j} = \langle H, \Psi(T_{a_1}T_{b_1}T_{c_1}T_{b_2}T_{c_2}\cdots T_{c_{\ell-1}}T_{b_\ell}T_{a_\ell})^{-2} \rangle$$ for each $j$, where $\ell=\lfloor \frac{i+1}{2}\rfloor$. Since $\Psi(T_{a_1}T_{b_1}T_{c_1}T_{b_2}T_{c_2}\cdots T_{c_{\ell-1}}T_{b_\ell}T_{a_\ell})^{2}\in H_{i,j}$, the commutativity relation between Dehn twists (about disjoint curves), would imply that $\Psi(T_{b_1}T_{c_1}T_{a_1}T_{b_1})\in H_{i,j}$. Thus, we have
\begin{eqnarray*}
\Psi((T_{b_1}T_{c_1}T_{a_1}T_{b_1})^{-1}T_{b_2}(T_{b_1}T_{c_1}T_{a_1}T_{b_1})) & = & \Psi(T_{b_1}^{-1}T_{c_1}^{-1}T_{b_2}T_{c_1}T_{b_1}) \\
& = & \Psi(T_{b_1}^{-1}T_{b_2}T_{c_1}T_{b_2}^{-1}T_{b_1}) \\
& = & \Psi(T_{b_2}T_{c_1}T_{b_1}T_{c_1}^{-1}T_{b_2}^{-1}),
\end{eqnarray*}
where the first equality follows from the commutativity (of Dehn twists about disjoint curves) and the last two follow from the braid relation between Dehn twists about curves (that intersect once). 
Hence, $\Psi({T_{b_1}})\in H_{i,j}$ for each $i \geq 3$.

Finally, we consider the case $i = 1$. In order to show that $\Psi(T_{b_1}) \in H_{1,j}$ (for $j 
\neq 1$), we will show that $H_{1,j}$ will contain an element of $g_{i,j'}H$ for some $i\geq 2$. Suppose that $g_{1,j}$ corresponds to the primitive vector $(1,y_2,y_3,\cdots,y_{2g})$. Choose a minimum $n$ such that $y_n\neq 0$. If ${\bar{y}}_n y_n\equiv 1 \pmod k$, then the matrix $M=I_{2g}+({\bar{y}}_n-1)E(1,1)+(y_n-1)E(2,2)  \in H$. Now setting $Q_2 = \Psi(T_{a_1})$, consider the following matrices for $1 \le i \le g-1$:
\begin{eqnarray*}
	Q_{2i+2} &  = & \Psi(T_{d_i} T_{a_1}^{-1} T_{a_{i+1}}^{-1})^{-1} \\
                   & = & I_{2g} + E(2i+1, 2) + E(1, 2i+2) \text{ and } \\
    Q_{2i+1} & =  & \Psi(T_{a_{i+1}} T_{b_{i+1}} T_{a_{i+1}}) ~ Q_{2i+2} ~ \Psi(T_{a_{i+1}} T_{b_{i+1}} T_{a_{i+1}})^{-1} \\
	               & = & I_{2g} - E(2i+2, 2) + E(1, 2i+1),
	\end{eqnarray*} 
where the $d_i$ are as shown in Figure~\ref{fig:curvesdi}. Then $\tilde{g}_{1,j}=Q_n^{-\bar{y}_n}g_{1,j}M \in \langle H, g_{n,j'}\rangle$. Thus, our assertion now follows from Lemma~\ref{lem:imp_max}.
\end{proof}

\subsection{$\LMod_{p_k}(S_g)$ is self-normalizing}
\label{sec:self_normal}
A subgroup $H$ of a group $G$ is said to be \textit{self-normalizing} if the normalizer $N_G(H) = H$. In other words, this means that if $H \trianglelefteq K \le G$ then $H=K$. In this subsection, we show that $\LMod_{p_k}(S_g)$ satisfies this property. 

\begin{theorem}
For $g \geq 2$ and $k \geq 2$, $\LMod_{p_k}(S_g)$ is self-normalizing in $\Mod(S_g)$. 
\end{theorem}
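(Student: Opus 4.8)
The plan is to show that $N_{\Mod(S_g)}(\LMod_{p_k}(S_g)) = \LMod_{p_k}(S_g)$, for which only the inclusion $N_{\Mod(S_g)}(\LMod_{p_k}(S_g)) \subseteq \LMod_{p_k}(S_g)$ requires an argument, the reverse being automatic. The natural strategy is to pass to the symplectic picture via $\Psi_k : \Mod(S_g) \to \Sp(2g,\Z_k)$. By Theorem~\ref{thm:symp_lift_crit}(iii), an element $f$ lies in $\LMod_{p_k}(S_g)$ precisely when $\Psi_k(f)$ stabilizes the line $\{\ell e_1 : \ell \in \Z_k^{\times}\}$ of primitive scalar multiples of $e_1$ in $H_1(S_g,\Z_k)$; equivalently, $\LMod_{p_k}(S_g) = \Psi_k^{-1}(H)$, where $H := \Psi_k(\LMod_{p_k}(S_g))$ is the stabilizer in $\Sp(2g,\Z_k)$ of this set of primitive vectors. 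Since $\LMod_{p_k}(S_g)$ is a full $\Psi_k$-preimage and $\Psi_k$ is surjective, normalizing $\LMod_{p_k}(S_g)$ in $\Mod(S_g)$ is equivalent to normalizing $H$ in $\Sp(2g,\Z_k)$, so the problem reduces to showing that $H$ is self-normalizing in $\Sp(2g,\Z_k)$.

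First I would set up this reduction carefully: if $f \in N_{\Mod(S_g)}(\LMod_{p_k}(S_g))$, then conjugation by $\Psi_k(f)$ preserves $H = \Psi_k(\LMod_{p_k}(S_g))$, because $\LMod_{p_k}(S_g) = \ker(\Psi_k)\text{-saturated}$ and $\ker \Psi_k \subseteq \LMod_{p_k}(S_g)$ (the Torelli group and more lies in every $\LMod_{p_k}$). Hence $\Psi_k(f) \in N_{\Sp(2g,\Z_k)}(H)$, and it suffices to prove $N_{\Sp(2g,\Z_k)}(H) = H$; pulling back via $\Psi_k$ then gives $f \in \Psi_k^{-1}(H) = \LMod_{p_k}(S_g)$. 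The heart of the matter is therefore the purely linear-algebraic claim about the stabilizer $H$ of the primitive multiples of $e_1$.

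To prove $H$ is self-normalizing in $\Sp(2g,\Z_k)$, I would argue that any $M \in N_{\Sp(2g,\Z_k)}(H)$ must itself stabilize the distinguished set $W := \{\ell e_1 : \ell \in \Z_k^{\times}\}$, which by the criterion is exactly the condition $M \in H$. The idea is to characterize $W$ intrinsically from $H$: the subgroup $H$ acts on $\Z_k^{2g}$, and $W$ should be recoverable as a canonical $H$-invariant object — for instance, by identifying the fixed or simultaneously-stabilized primitive vectors of a suitable subgroup of $H$ (such as the image of the transvection-type generators $\eta_g(\Gamma_1(k))$ together with the $\Psi_k(T_{c_i})$ and $\Psi_k(T_{a_j}),\Psi_k(T_{b_j})$ from Theorem~\ref{thm1}). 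If $M$ normalizes $H$, then $M W$ is again such a canonical object for $M H M^{-1} = H$, forcing $M W = W$, whence $M \in H$. Concretely, one computes that the common stabilized line through $e_1$ is the unique primitive direction left invariant by all of $H$, and normalization permutes such invariants trivially.

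The main obstacle I anticipate is the intrinsic characterization of $W$ from the abstract group $H$: one must verify that $W$ (or the submodule $\langle e_1 \rangle$ together with its primitivity data) is genuinely the only $H$-invariant set of its kind, so that a normalizing $M$ has no freedom to move it. This is where the explicit generating set from Theorem~\ref{thm1} becomes essential — I would use the specific matrices $\eta_g(\Gamma_1(k))$ and the images of the $T_{a_j}, T_{b_j}, T_{c_i}$ to pin down the span and the primitivity class of $e_1$ as the unique line on which $H$ acts through $\Z_k^{\times}$-scalars while acting with full symplectic freedom on a complement. Handling the edge cases where $k$ is composite (so $\Z_k^{\times}$ and the structure of primitive vectors mod $k$ are more delicate) and ensuring the argument is uniform in $g \geq 2$ will require the number-theoretic Lemmas~\ref{lemma1} and~\ref{lemma2} to guarantee enough transvections lie in $H$ to rigidify $W$.
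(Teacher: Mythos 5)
Your reduction to the symplectic quotient is correct and rigorous as far as it goes: since $\Psi_k$ is surjective onto $\Sp(2g,\Z_k)$ and $\ker \Psi_k = \Mod(S_g)[k] \leq \Mod_{p_k}(S_g,e_1) \leq \LMod_{p_k}(S_g)$ (Theorem~\ref{thm:norm_series_gen}), the group $\LMod_{p_k}(S_g)$ is indeed the full preimage of $H := \Stab_{\Sp(2g,\Z_k)}(W)$ with $W = \{\ell e_1 : \ell \in \Z_k^{\times}\}$ (Theorem~\ref{thm:symp_lift_crit}), and $N_{\Mod(S_g)}(\Psi_k^{-1}(H)) = \Psi_k^{-1}(N_{\Sp(2g,\Z_k)}(H))$. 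Likewise, your observation that a normalizing $M$ sends $W$ to the scalar class $MW = \{\ell \, Me_1 : \ell \in \Z_k^{\times}\}$, which is again $H$-invariant, is sound, so everything hinges on showing $W$ is the \emph{unique} $H$-invariant $\Z_k^{\times}$-scalar class of primitive vectors.

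That uniqueness claim, however, is exactly the step you flag as ``the main obstacle'' and never carry out, and it is the entire mathematical content of the theorem; the paper's proof consists almost wholly of this verification. The paper argues directly in $\Mod(S_g)$: for $h$ in the normalizer it sets $v = h_{\#}(e_1)$, deduces $f_{\#}(v) = \ell v$ for all $f \in \LMod_{p_k}(S_g)$, lifts $v$ to $\bar{v} = \sum_i \alpha_i a_i + \sum_i \beta_i b_i \in H_1(S_g,\Z)$, and then exhibits explicit liftable classes moving any $v \notin W$ off its scalar line: if some $\alpha_i, \beta_i$ with $i \geq 2$ is nonzero, a change-of-coordinates homeomorphism fixing a neighborhood of $a_1 \cup b_1$ and carrying the primitive part of $\bar{v}$ supported on the higher handles to $b_2$ does the job; if $v = \alpha_1 a_1 + \beta_1 b_1$ with $\beta_1 \neq 0$, then $T_{c_1} \in \Mod_{p_k}(S_g,e_1)$ sends $v$ to $\alpha_1 a_1 + \beta_1(b_1 + a_1 - a_2)$, which is not a unit multiple of $v$. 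Your proposed substitute --- recovering $W$ ``intrinsically'' from the generating matrices of Theorem~\ref{thm1} via Lemmas~\ref{lemma1} and~\ref{lemma2} --- points at the wrong tools: those lemmas are column-reduction devices used to prove the generation statement and yield no invariance or rigidity assertion about $H$-orbits; what is needed is precisely the short orbit computation above. So your skeleton (quotient to $\Sp(2g,\Z_k)$, then uniqueness of the invariant class) is viable and, once the uniqueness is proved, essentially equivalent to the paper's argument, but as written the proposal asserts rather than proves the one nontrivial step and therefore has a genuine gap.
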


\begin{proof}
Let us assume on the contrary that there exists a proper subgroup $H < \Mod(S_g)$ in which $\LMod_{p_k}(S_g)$ is normal.  Now choose an $h \in H \setminus \LMod_{p_k}(S_g)$. Then by our assumption $h^{-1} f h \in \LMod_{p_k}(S_g)$ for any $f \in \LMod_{p_k}(S_g)$. By Theorem~\ref{thm:symp_lift_crit}, this is equivalent to requiring that 
$(h^{-1} f h)_{\#} (e_1) = \ell e_1$, for some $\ell \in \Z_k^{\times}$. In other words, $(fh)_{\#}(e_1) = \ell (h_{\#}(e_1))$. Denoting $h_{\#}(e_1) = v$, we have $f_{\#}(v) = \ell \, v$, for some $\ell \in \Z_k^{\times}$. Thus, it suffices to show that for any primitive vector $v \in H_1(S_g,\Z_k)$ such that $v \notin \{\ell e_1 : \ell \in \Z_k^\times\}$, there exists an $f \in \LMod_{p_k}(S_g)$ such that $f_{\#}(v) \neq \ell v$, for any $\ell \in \Z_k^{\times}$.

Now given a $v \in H_1(S_g,\Z_k)$, consider a $\bar{v} \in H_1(S_g,\Z)$ such that $v = \bar{v} \pmod k$. Let 
$\bar{v} = \sum_{i=1}^g \alpha_i a_i + \sum_{i=1}^g \beta_i b_i$. We break our argument into the following two cases.

\textit{Case 1: At least one among $\{\alpha_2, \beta_2, \ldots, \alpha_g, \beta_g\}$ is nonzero.} Without loss of generality, let us assume that $\alpha_2 \neq 0$. Then $\bar{v} = \alpha_1a_1 + \beta_1b_1 + mw,$ where $m \in \Z$ and $w \in \langle a_2,b_2, \ldots, a_g,b_g\rangle$ is primitive. Let $c$ be an oriented simple closed curve in $S_g$ that represents the class of $w$. Note that $c$ can be chosen to be disjoint from both $a_1$ and $b_1$. Let $f \in \Mod(S_g)$ such that $f(a_1) = a_1$, $f(c) = b_2$, and $f$ is represented by a homeomorphism that restricts to the identity in a closed regular neighborhood of $a_1 \cup b_1$. (Such a choice of $f$ is possible from the change of coordinates principle.) Then $f_{\#}(\bar{v}) = \alpha_1 a_1 +\beta_1b_1 + mb_2$, from which it is apparent that $f_{\#}(\bar{v}) \neq \ell \bar{v}$, for any $\ell \in \Z_k^{\times}$. The argument for any other $\alpha_i$ or $\beta_j$ would be similar. 

\textit{Case 2: $\alpha_i = \beta_i = 0$, for $2 \leq i \leq g$.} We may further assume that $\beta_1 \neq 0$, for this would  otherwise imply that $h \in \LMod_{p_k}(S_g)$. In this case, we can see that $(T_{c_1})_{\#}(v) =\alpha_1a_1+\beta_1 (b_1+a_1-a_2) \neq \ell v$, for any $\ell \in \Z_k^{\times}$.
Thus, our assertion follows.
\end{proof}

	\section{Generating $\LMod_{p_k}(S_g)$}
	\label{sec:gen_lmod}

A crucial tool which we will use comes from the work of Johnson in \cite{John1}. We begin by deriving a generating set for $\Psi(\Mod_{p_k}(S_g,e_1))$ using the symplectic criteria for liftability and employing a few number-theoretic arguments. 

\subsection{Generating $\Psi(\Mod_{p_k}(S_g,e_1))$}
We establish the following theorem in this subsection.
	\begin{theorem}\label{thm1}
		$\Psi(\Mod_{p_k}(S_g,e_1))$ is generated by $$\mathbb{S}^{(g)} = \eta_g(\Gamma_1(k)) \cup \Psi( \{ T_{a_2}, T_{b_2}, \dots ,T_{a_g}, T_{b_g}, T_{c_1}, \dots ,T_{c_{g-1}} \})\,$$ where $\eta_g$ is the canonical embedding $\eta_g : \SL(2,\Z) \to \Sp(2g,\mathbb{Z})$ defined by 
$$A \xmapsto{\eta_g}  \begin{bmatrix}
A       & O_{2,2g-2}\\
O_{2g-2,2} & I_{2g-2}
\end{bmatrix},$$ where $O_{m,n}$ is the $m \times n$ zero block and $I_{2g-2}$ is the $(2g-2) \times (2g-2)$ identity block.
	\end{theorem}
	
\noindent The proof of Theorem~\ref{thm1} will require the following technical number-theoretic lemmas. 
\begin{lemma}\label{lemma1}
Let $\mathcal{A}$ be a tuple $(a_{11}, \dots ,a_{(2g)1}) \in \Z^{2g}$ such that $\gcd(a_{11}, \dots ,a_{(2g)1})=1$. Then there exists $\mathcal{B} = (\lambda_1, \dots , \lambda_{2g}) \in \Z^{2g}$ such that $\gcd(\lambda_1 , \lambda_2)=1$ and $\mathcal{A}\cdot \mathcal{B} = 1$. Further, if $a_{11} \equiv 1 \pmod k$ and $a_{j1} \equiv 0 \pmod k$ for all $2 \le j \le 2g$, then $\lambda_1 \equiv 1 \pmod k$.
	\end{lemma}
	
\begin{proof}
Consider $\delta = \gcd (a_{21}, a_{31}, \dots , a_{(2g)1})$. Since $\gcd (a_{11}, \delta) = 1$, we have integers $t_1, t_2$ such that $t_1 a_{11} + t_2 \delta = 1$. We define $\gamma = \gcd (a_{31} , \dots , a_{(2g)1})$. So we have integers $t_3, \dots, t_{2g}$ such that $t_3 a_{31} + \dots + t_{2g} a_{(2g)1} = \gamma$. As $\delta = \gcd(a_{21}, \gamma)$, we can choose integers $r_1, r_2$ such that $r_1 a_{21} + r_2 \gamma = \delta$. This equation can be rewritten as 
$$r_1 \frac{a_{21}}{\delta} + r_2 \frac{\gamma}{\delta} = 1.$$
		
		By Dirichlet prime number theorem, there exists a natural number $N$ such that $r_1 + N \frac{\gamma}{\delta}$ is a prime number bigger than $t_1$. Denoting $\beta_1 = r_1 + N \frac{\gamma}{\delta}$ and $\beta_2 = r_2 - N \frac{a_{21}}{\delta}$, we have:	
		$$\beta_1 \frac{a_{21}}{\delta} + \beta_2 \frac{\gamma}{\delta} = 1.$$
\noindent From the equations above, we deduce that $t_1 a_{11} + t_2 (\beta_1 a_{21} + \beta_2 \gamma) = 1$, which implies that
		\begin{equation} 
		\label{eqn:1}
		t_1 a_{11} + t_2 (\beta_1 a_{21} + \beta_2 (t_3 a_{31} + \dots + t_{2g} a_{(2g)1})) = 1.
		\end{equation}

Denoting $\lambda_1 = t_1, \lambda_2 = t_2 \beta_1, \lambda_i = t_2 \beta_2 t_i$, for $3 \le i \le 2g$, we further assume that $a_{11} \equiv 1 \pmod k$ and $a_{j1} \equiv 0 \pmod k$, for $2 \le j \le 2g$. Thus, Equation~\ref{eqn:1} takes the form $\lambda_1 a_{11} + \lambda_2 a_{21} + \dots + \lambda_{2g} a_{(2g)1} = 1$, from which it follows that $\lambda_1 \equiv 1 \pmod k$.
	\end{proof}
		
	\begin{lemma}\label{lemma2}
Let $\lambda_1, \lambda_2$ be two coprime integers and $\lambda_1 \equiv 1 \pmod k$. Then there exist $\alpha_1, \alpha_2 \in \Z$ with $\alpha_2 \equiv 0 \pmod k$, such that $\alpha_1 \lambda_1 + \alpha_2 \lambda_2 = 1.$ 
	\end{lemma}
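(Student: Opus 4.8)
The plan is to reduce the statement to a single Bézout-type condition and then verify that the coprimality hypotheses guarantee it. First I would substitute $\alpha_2 = k\beta$, so that the desired equation $\alpha_1\lambda_1 + \alpha_2\lambda_2 = 1$ becomes $\alpha_1\lambda_1 + \beta(k\lambda_2) = 1$. By Bézout's identity, this has a solution in integers $\alpha_1,\beta$ if and only if $\gcd(\lambda_1, k\lambda_2) = 1$; producing such $\alpha_1$ and $\beta$ then yields $\alpha_2 = k\beta$, which is divisible by $k$ as required.

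The main point is therefore to establish $\gcd(\lambda_1, k\lambda_2) = 1$, which I would do by checking that $\lambda_1$ is coprime to each factor separately. Coprimality with $\lambda_2$ is part of the hypothesis. For coprimality with $k$, I would use $\lambda_1 \equiv 1 \pmod k$: writing $\lambda_1 = 1 + mk$ for some $m \in \Z$, any common divisor $d$ of $\lambda_1$ and $k$ divides $\lambda_1 - mk = 1$, forcing $d = 1$, so $\gcd(\lambda_1, k) = 1$. Since $\lambda_1$ is coprime to both $k$ and $\lambda_2$, it is coprime to their product, giving $\gcd(\lambda_1, k\lambda_2) = 1$ and completing the argument.

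I do not expect a genuine obstacle here, as the statement is elementary; the only subtlety worth flagging is that the congruence condition $\lambda_1 \equiv 1 \pmod k$ is exactly what upgrades a plain Bézout relation (which offers no control on the coefficient of $\lambda_2$) to one with $k \mid \alpha_2$. As an alternative route that avoids the reduction, one could instead start from any Bézout relation $a\lambda_1 + b\lambda_2 = 1$ and exploit the freedom in its general solution $(a + t\lambda_2,\, b - t\lambda_1)$ for $t \in \Z$: choosing $t = b$ yields $\alpha_2 = b - b\lambda_1 = b(1-\lambda_1)$, which is divisible by $k$ precisely because $\lambda_1 \equiv 1 \pmod k$, paired with $\alpha_1 = a + b\lambda_2$. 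Either formulation makes the role of the congruence hypothesis transparent, and I would present whichever reads more cleanly in context.
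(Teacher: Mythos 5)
Your proof is correct, and both of your routes are genuinely cleaner than the paper's own argument, though all three rest on the same underlying fact. The paper also starts from an arbitrary B\'ezout relation, but then proves by hand, via a pigeonhole argument on the set $\{\lambda_2 k i : 0 \le i \le \lambda_1 - 1\}$, that $\lambda_1$ divides $\lambda_2 k i_0 + 1$ for some $i_0$, and sets $\alpha_1 = (\lambda_2 k i_0 + 1)/\lambda_1$, $\alpha_2 = -k i_0$; this is in effect a bare-hands construction of an inverse of $k\lambda_2$ modulo $\lambda_1$, where coprimality of $k$ with $\lambda_1$ is used inside the pigeonhole step (and is exactly your observation that $\lambda_1 \equiv 1 \pmod k$ forces $\gcd(\lambda_1,k)=1$). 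Your first route compresses this by substituting $\alpha_2 = k\beta$ and invoking B\'ezout once for $\gcd(\lambda_1, k\lambda_2) = 1$, replacing the pigeonhole computation with the standard multiplicativity of coprimality; your second route, shifting a given solution along the solution line by $t = b$ to get $\alpha_2 = b(1-\lambda_1)$, is perhaps the most transparent of all, since divisibility of $\alpha_2$ by $k$ drops out directly from the congruence hypothesis with no auxiliary gcd claim. What the paper's version buys is self-containedness (it proves the invertibility from scratch rather than citing B\'ezout for the composite modulus); what yours buy is brevity and a clearer view of why the hypothesis $\lambda_1 \equiv 1 \pmod k$ is exactly what is needed.
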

	
\begin{proof}
Since $\gcd(\lambda_1, \lambda_2) = 1$, there exist  $\alpha_1, \alpha_2 \in \Z$ such that $\alpha_1 \lambda_1 + \alpha_2 \lambda_2 = 1.$ Now let us assume that $\lambda_1 \equiv 1 \pmod k$. Consider the set $\{ \lambda_2 k i ~ | ~ 0 \le i \le \lambda_1 - 1 \}$. 

We first establish that $\lambda_1$ divides $(\lambda_2 k i + 1)$, for some $i$. Suppose we assume on the contrary. Then by pigeonhole principle, there exist $0 \le i \ne j \le \lambda_1 - 1$ such that $\lambda_2 k i + 1 \equiv \lambda_2 k j + 1 \pmod {\lambda_1}$, and hence $\lambda_2 k i \equiv \lambda_2 k j \pmod {\lambda_1}$. Since both $\lambda_2$ and $k$ are coprime with $\lambda_1$, we have $i \equiv j \pmod {\lambda_1}$, which is a contradiction. 

Hence, there exists $i_0$ such that $\lambda_1$ divides $(\lambda_2 k i_0 + 1)$. Finally, by choosing $\alpha_1 = (\lambda_2 k i_0 + 1) / \lambda_1$ and $\alpha_2 = - k i_0$, our assertion follows.
	\end{proof}

Let $\Stab_{\Mod(S_g)}(\overrightarrow{a_1})$ denote the subgroup of $\Mod(S_g)$ which preserves the curve $a_1$ with a chosen orientation. Let $\PMod(S)$ denote the pure mapping class group of $S$. We need the following lemma, which follows from \cite[Chapter 4]{FM}.

\begin{lemma}\label{lem:staba1}
	$\Stab_{\Mod(S_g)}(\overrightarrow{a_1}) = \langle  T_{a_1}, T_{c_1}, \dots , T_{c_{g-1}}, T_{a_2},T_{b_2}, \dots , T_{a_g},T_{b_g} \rangle$.
\end{lemma}

\begin{proof}
	Let $S_g - a_1$ denote the surface obtained from $\overline{S_g \ \backslash \ a_1}$ after capping its boundary components with marked disks. We have the following short exact sequence
	\begin{equation} 
		\label{ses1}
		1 \to \Z \to \Stab_{\Mod(S_g)}(\overrightarrow{a_1}) \xrightarrow \pi \PMod(S_g - a_1)  \to 1.
	\end{equation} Since $\PMod(S_g - a_1) = \langle S \rangle$, where
	$$S =  \{ T_{c_1}, \dots , T_{c_{g-1}}, T_{a_2},T_{b_2}, \dots , T_{a_g},T_{b_g} \} ,$$
	if follows from the sequence (\ref{ses1}) that $\Stab_{\Mod(S_g)}(\overrightarrow{a_1}) = \langle S , T_{a_1}\rangle$. 
\end{proof}
	
We are now in a position to prove Theorem~\ref{thm1}.

\begin{proof}[Proof (of Theorem~\ref{thm1})]
	Given an $A = (a_{ij}) \in \Psi(\Mod_{p_k}(S_g,e_1))$, we wish to express $A$ as a product of elements in the generating set $$\mathbb{S}^{(g)} = \eta_g(\Gamma_1(k)) \cup \Psi( \{ T_{a_2}, T_{b_2}, \dots ,T_{a_g}, T_{b_g}, T_{c_1}, \dots ,T_{c_{g-1}} \}).$$ To this end, we iteratively multiply $A$ with matrices from $\mathbb{S}^{(g)}$ so that the end product is the identity matrix $I_{2g}$. For this purpose let us consider the curves $d_1, d_2, \dots, d_{g-1}$ as shown in Figure~\ref{fig:curvesdi}. By Lemma~\ref{lem:staba1}, $\Psi(T_{d_i}) \in \langle \mathbb{S}^{(g)} \rangle$, for all $i$.

	\begin{figure}[h]
		\labellist
		\small
		\pinlabel $d_1$ at 68 50
		\pinlabel $d_2$ at 115 45
		\pinlabel $d_3$ at 160 41
		\pinlabel $d_{g-1}$ at 292 28
		
		\endlabellist
		\centering
		\includegraphics[width=60ex]{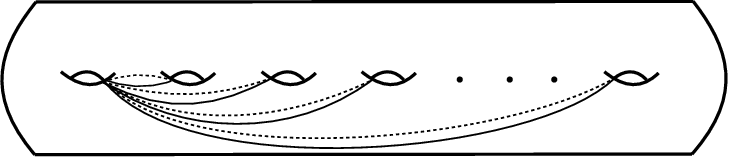}
		\caption{The curves $d_i$ in $S_g$.}
		\label{fig:curvesdi}
	\end{figure}
	
	To begin with, we consider the first column $A(1) = (a_{11}, a_{21}, \dots ,a_{(2g)1})$ of $A$. Lemma \ref{lemma1} provides us a $B = (\lambda_1, \lambda_2,\dots,\lambda_{2g}) \in \Z^{2g}$ such that $\gcd(\lambda_1 , \lambda_2)=1$ and $A(1)\cdot B = 1$. By Lemma \ref{lemma2}, we have $\alpha_1, \alpha_2 \in \Z$, such that $\alpha_1 \equiv 0 \pmod k$ and $\lambda_1\alpha_2 + \lambda_2\alpha_1 = 1.$
	
	Let $I_{2g}$ denote the identity matrix of order $2g$, and $E(i,j)$ denote the $2g \times 2g$ matrix whose $(i,j)$th entry is $1$ and all other entries are $0$. Consider the following matrices, for $1 \le i \le g-1$:
	
	\begin{eqnarray*}
	M_1 & = &\eta_g \left( \begin{bmatrix}
		\lambda_1&\lambda_2 \\
		-\alpha_1&\alpha_2 
	\end{bmatrix} \right),\\
	M_{2i+1} &  = & \Psi(T_{d_i} T_{a_1}^{-1} T_{a_{i+1}}^{-1})^{-1} \\
                   & = & I_{2g} + E(2i+1, 2) + E(1, 2i+2), \text{ and } \\
    M_{2i+2} & =  & \Psi(T_{a_{i+1}} T_{b_{i+1}} T_{a_{i+1}}) ~ M_{2i+1} ~ \Psi(T_{a_{i+1}} T_{b_{i+1}} T_{a_{i+1}})^{-1} \\
	               & = & I_{2g} - E(2i+2, 2) + E(1, 2i+1).
	\end{eqnarray*} 

\noindent Note that $M_1, \Psi(T_{a_1}), \Psi(T_{b_1})^k \in \eta_g(\Gamma_1(k))$. We consider the element
$$A_1= \Psi(T_{a_1})^{- \sum_{i=1}^{g-1} \lambda_{2i+1} \lambda_{2i+2}} M_{2g}^{\lambda_{2g-1}} M_{2g-1}^{\lambda_{2g}} \dots M_4^{\lambda_3} M_3^{\lambda_4} M_1 A.$$

\noindent Then the $(1,1)^{th}$ entry in $A_1$, which we denote as $A_1(1,1)$, equals $1$. Now we consider the following matrices, for $1 \le i \le g-1$: 

\begin{eqnarray*}
N_{2i+1} & = & \Psi(T_{b_1}^{-k} T_{a_{i+1}}^{-k}) M_{2i+1} \Psi(T_{b_1})^k M_{2i+1}^{-1} \\ 
	          & = & I_{2g} - k.E(2i+1, 1) + k.E(2, 2i+2) \text{ and} \\
N_{2i+2} &  = & \Psi(T_{a_{i+1}} T_{b_{i+1}} T_{a_{i+1}}) ~ N_{2i+1} ~ \Psi(T_{a_{i+1}} T_{b_{i+1}} T_{a_{i+1}})^{-1} \\
	          & =  & I_{2g} + k.E(2i+2, 1) + k.E(2, 2i+1).
\end{eqnarray*}

\noindent Next, we consider 
\begin{eqnarray*} 
A_2 & = & N_{2g}^{(-1)^{2g-1} \frac{A_1(2g,1)}{k}} N_{2g-1}^{(-1)^{2g-2} \frac{A_1(2g-1,1)}{k}} \dots N_4^{(-1)^3 \frac{A_1(4,1)}{k}} N_3^{(-1)^2 \frac{A_1(3,1)}{k}} A_1 \text{ and}\\
A_3 & = &\Psi(T_{b_1})^{A_2(2,1)} A_2.
\end{eqnarray*} 
 We note that the first column of $A_3$ is $(1,0,\dots, 0)$, and as $A_3 \in \Sp(2g, \Z)$, the second row of $A_3$ is $(0,1,0,\dots,0)$.

Finally, we consider
\begin{eqnarray*}  
A_4 & = & M_{2g}^{(-1)^{2g} A_3(2g,2)} M_{2g-1}^{(-1)^{2g-1} A_3(2g-1,2)} \dots M_4^{(-1)^4 A_3(4,2)} M_3^{(-1)^3 A_3(3,2)} A_3 \text{ and} \\ 
A_5 & = & {\Psi(T_{a_1})}^{-A_4(1,2)}A_4.
\end{eqnarray*} 
\noindent We see that $A_5$ has the form $\begin{bmatrix}
	I_2 & O_{2 \times 2g-2}\\
	O_{2g-2 \times 2} & M,
\end{bmatrix},$
where $M \in \Sp(2g-2, \Z)$, and  $A_5$ is a product of elements from
$$ \Psi(\{ T_{a_2},\dots,T_{a_g}, T_{b_2}, \dots, T_{b_g}, T_{c_2}, \dots, T_{c_{g-1}} \}).$$
Hence, the theorem follows.
\end{proof}

\subsection{Generating the kernel of $\, \Psi\vert_{\Mod_{p_k}(S_g,e_1)}$}\label{genkernel} We begin this subsection by recalling some relevant definitions and tools from \cite{John1}. 
	
\begin{definition}
For $n \geq 2$, a \textit{chain} in $S_g$ is an ordered collection $(\gamma_1, \gamma_2, \dots , \gamma_n)$ of
oriented simple closed curves such that:
\begin{enumerate}[(i)]
\item for each $1 \le i \le n-1$, the geometric intersection number $i(\gamma_i, \gamma_{i+1}) = 1$ and the algebraic intersection number $\hat i (\gamma_i, \gamma_{i+1}) = +1$,
\item $i(\gamma_i, \gamma_j) = 0$ if $|i-j| > 1$, and
\item the homology classes of the $\gamma_i$'s are linearly independent.	
\end{enumerate}
\noindent The integer $n$ is called the \textit{length} of the above chain $(\gamma_1, \gamma_2, \dots , \gamma_n)$. 
\end{definition}

Given a chain $C = (\gamma_1, \dots , \gamma_{2k+1})$ of odd length in $S_g$, there exists a unique pair of curves that constitutes the boundary of a closed regular neighborhood of $C$. We call this pair $B$ as the \textit{bounding pair associated with the chain $C$.} Moreover, the unique bounding pair map in $\I(S_g)$ associated with such a $B$ is given by the product of opposite twists about the two boundary curves in $B$ with the positive twist being taken about the curve lying to the left of $\gamma_1, \gamma_2, \dots, \gamma_{2k+1}$. This is called the \textit{bounding pair map associated with $C$}, and is denoted by $ch(\gamma_1, \dots , \gamma_{2k+1})$.

We further extend this definition in the following manner. By an \textit{ordered subcollection} of $C$, we mean a subtuple $(\gamma_{i_1}, \gamma_{i_2}, \ldots, \gamma_{i_{\ell}})$ of $C$ such that $\ell$ is even. Given an ordered subcollection $(\gamma_{i_1}, \gamma_{i_2}, \ldots, \gamma_{i_{\ell}})$ of $C$, let $\theta_j$ denote the free homotopy class of the curve $\tilde{\gamma}_{i_j}*\tilde{\gamma}_{i_j+1}*\ldots *\tilde{\gamma}_{i_{(j+1)}-1}$, where $\tilde{\gamma}_m$ denotes the representative of $\gamma_m$ in $(\pi_1(S_g),*)$. We define $$\widehat{ch}(\gamma_{i_1}, \gamma_{i_2}, \ldots, \gamma_{i_{\ell}}):= ch(\theta_1,\ldots,\theta_{\ell-1})$$ to be the \textit{bounding pair map associated with the subcollection $(\gamma_{i_1}, \gamma_{i_2}, \ldots, \gamma_{i_\ell})$} and $(\theta_1,\ldots,\theta_{\ell-1})$ will be called the \textit{chain induced by} $(\gamma_{i_1}, \gamma_{i_2}, \ldots, \gamma_{i_{\ell}})$.  For further details, we refer the reader to \cite[Section 2]{John1}.

 Now we consider the ordered collection $C_g = (a_1,b_1,c_1,b_2,c_2,b_3,\ldots,c_{g-1},b_g,a_g)$ of length $2g+1$, where the curves $a_i$, $b_i$, and $c_i$ are as shown in Figure~\ref{fig:Chain_curve}. For simplicity, we rename these curves and fix the notation $C_g := (\alpha_1,\ldots,\alpha_{2g+1})$. Also consider the curve $\beta$ as shown in Figure~\ref{fig:Chain_curve}.

\begin{figure}[H]
	\labellist
	\small
	\pinlabel $a_1$ at 64 32
	\pinlabel $b_1$ at 61 94
	\pinlabel $c_1$ at 108 52
	\pinlabel $b_2$ at 125 90
	\pinlabel $c_2$ at 194 51
	\pinlabel $b_3$ at 223 94
	\pinlabel $a_{g}$ at 432 32
	\pinlabel $b_{g}$ at 426 94
	\pinlabel $\beta$ at 147 123
	\endlabellist
	\begin{center}
		\includegraphics[width=75ex]{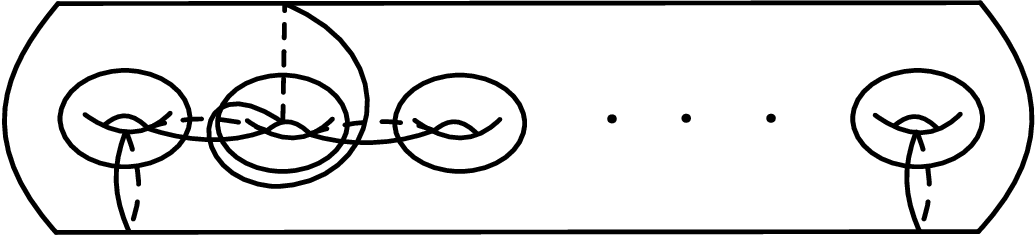}
	\end{center}
	\caption{The Johnson chain of curves.}
	\label{fig:Chain_curve}
\end{figure}

\noindent We now state the main result from \cite{John1} that is relevant to our context.
 
\begin{theorem}[Johnson]
\label{thm:johnson}
For $g \ge 3$, the bounding pair maps associated with the ordered subcollections of $C_g$ and $(\beta, \alpha_5, \alpha_6, \ldots, \alpha_{2g+1})$ generate $\I(S_g)$.
\end{theorem}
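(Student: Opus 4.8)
The plan is to reduce the statement to the classical theorem of Birman and Powell that $\I(S_g)$ is generated by bounding pair maps together with twists about separating curves, and then to carry out two successive reductions: first eliminating the separating twists, and second showing that the finitely many bounding pair maps listed already capture all the remaining generators. Throughout, the hypothesis $g \ge 3$ is what gives enough room for both reductions to go through.

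For the first reduction I would invoke the lantern relation. For $g \ge 3$ a twist about a separating curve can be realized as one boundary twist of an embedded four-holed sphere whose three remaining boundary curves are nonseparating; the lantern relation then expresses that separating twist as a product of twists which group, in homologous pairs, into bounding pair maps. Hence $\I(S_g)$ is generated by bounding pair maps alone. Since every bounding pair map equals $ch(C)$ for some odd-length chain $C$ (a chain of length $2h+1$ having a genus-$h$ regular neighborhood with two boundary curves), the remaining problem becomes purely one of reducing the set of admissible chains $C$ down to the subchains of $C_g$ and $(\beta,\alpha_5,\dots,\alpha_{2g+1})$.

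The substantial part is this second reduction: showing that the subgroup $\mathcal{B}$ generated by the $ch$-maps of the ordered subcollections of the fixed chain $C_g=(\alpha_1,\dots,\alpha_{2g+1})$ and of $(\beta,\alpha_5,\dots,\alpha_{2g+1})$ contains \emph{every} bounding pair map. My approach would be a change-of-coordinates argument internal to the Torelli group. I would organize all chains carrying a bounding pair into a graph whose edges are elementary moves — transposing two adjacent curves of a chain, sliding one curve over a handle, and replacing a subchain by a homologous one — and verify that each elementary move is realized by multiplication by an element that \emph{already lies in} $\mathcal{B}$, using the chain relation and the commutation/conjugation identities among the listed generators (precisely the kind of identities the authors record when they write one $M_i$ as a conjugate of another). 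Granting this, it remains to show the graph is connected, so that every bounding pair map is $\mathcal{B}$-equivalent to one supported on a subchain of $C_g$ and therefore lies in $\mathcal{B}$.

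The hard part will be exactly this connectivity-together-with-realizability step, which is the combinatorial heart of Johnson's original argument in \cite{John1}: one must exhibit enough moves to connect arbitrary chains and then check, by explicit relation chasing, that none of them ever leaves $\mathcal{B}$. This is also where the extra chain $(\beta,\alpha_5,\dots,\alpha_{2g+1})$ is indispensable — it supplies the one bounding pair outside the subcollections of $C_g$ that is needed to join configurations the moves supported on $C_g$ alone cannot connect, so that without it the candidate subgroup would have infinite index in $\I(S_g)$. A cleaner but heavier alternative I would keep in reserve is to induct on $g$: let $\I(S_g)$ act on an appropriate highly connected complex built from chains, use a Birman-type exact sequence to identify the vertex stabilizers with Torelli groups of lower genus, deduce finite generation from the action, and then match the abstract generators to the stated list by direct computation in the base case $g=3$.
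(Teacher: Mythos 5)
Note first that the paper contains no proof of this statement to compare against: Theorem~\ref{thm:johnson} is Johnson's finite-generation theorem, imported verbatim from \cite{John1} and used as a black box, so the expected treatment is a citation. Judged on its own merits, your proposal gets the classical preliminary reductions right: Birman--Powell generation of $\I(S_g)$ by bounding pair maps and separating twists, Johnson's lantern-relation trick writing each separating twist as a product of bounding pair maps when $g \ge 3$ (and this is indeed where $g\ge 3$ enters; in genus $2$ the reduction fails, as $\I(S_2)$ is generated by separating twists and is infinitely generated), and the change-of-coordinates fact that every bounding pair map of genus $h$ is the chain map of a chain of length $2h+1$.

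The gap is that the second reduction --- the actual content of the theorem --- is not supplied but restated. Organizing chains into a graph of ``elementary moves'' and asserting (a) each move is realized by multiplication by an element of $\mathcal{B}$ and (b) the graph is connected, is a reformulation of the claim as two unproved assertions, and you concede that this ``connectivity-together-with-realizability step'' is the heart of the matter; a proof proposal cannot defer precisely that. Your moves are also not well defined as stated: transposing two adjacent curves of a chain generally destroys the chain condition, and ``replacing a subchain by a homologous one'' is exactly the kind of step whose realizability in $\mathcal{B}$ needs pages of relation computations. For the record, Johnson's actual architecture is different and cleaner than a connectivity argument: he first reduces (again via the lantern relation) to bounding pair maps of \emph{genus one}, which form a single $\Mod(S_g)$-conjugacy class; he then shows the subgroup $\mathcal{B}$ generated by the listed chain maps is \emph{normal} in $\Mod(S_g)$ by explicitly conjugating each generator by the Humphries twist generators and rewriting the result inside $\mathcal{B}$, so that normality plus one genus-one bounding pair map yields $\I(S_g) \le \mathcal{B}$. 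Your graph-of-chains plan is closer in spirit to the much later Hatcher--Margalit proof, where the connectivity input (for the complex of handles) is itself a substantial theorem, not a verification. Finally, your aside that dropping the chain $(\beta,\alpha_5,\ldots,\alpha_{2g+1})$ would leave a subgroup of \emph{infinite index} in $\I(S_g)$ is unsupported; what is true is only that Johnson's computations require that extra generator.
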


Let the \textit{genus} of a bounding pair map associated with a chain $C$ of length $2k+1$ is $k$. Following the notation from Section~\ref{sec:intro}, we now establish the main result in this subsection. 
\begin{theorem}
\label{thm:gen_kernel}
For $k \geq 2$ and $g \geq 3$, 
$$ \ker (\Psi\vert_{\Mod_{p_k}(S_g,e_1)}) \subset \langle \S_{g,k} \cup \{T_{a_1},T_{a_2},T_{b_2}, \ldots,T_{a_g},T_{b_g},T_{c_1},\ldots,T_{c_{g-1}}\} \rangle,$$
where $\S_{g,k} = \{F_1,\ldots,F_{g-2}\}$ is a collection of bounding pair maps such that for each $i$, $F_i$ is a bounding pair map associated with an ordered subcollection of $C_g$ of genus $i$ that intersect $a_1$ nontrivially (as shown in Figure~\ref{fig:torelli_gens}).
\end{theorem}

\begin{proof}
By construction, a bounding pair map $F$ associated with any ordered subcollection of $(\beta, \alpha_5, \ldots, \alpha_{2g+1})$ or an ordered subcollection of $C_g$ that is disjoint from $a_1$ (as stated in Theorem~\ref{thm:johnson}),  will preserve $a_1$ with orientation. Thus, by Lemma~\ref{lem:staba1}, it follows that $F \in \langle T_{a_1}, T_{c_1}, \dots , T_{c_{g-1}}, T_{a_2}, T_{b_2}, \dots , T_{a_g},T_{b_g}\rangle$. 

For $i = 1,2$,  let $P_i = (\alpha_{i1},\ldots,\alpha_{i\ell})$ be an ordered subcollection of $C_g$ that intersects $a_1$. Then there exists a $G \in \Stab_{\Mod(S_g)}(\overrightarrow{a_1})$ that maps the chain induced by the ordered subcollection $(\alpha_{11},\ldots,\alpha_{1\ell})$ to the chain induced by the ordered subcollection $(\alpha_{21},\ldots,\alpha_{2\ell})$. Consequently, $\widehat{ch}(P_1)$ is conjugate to $\widehat{ch}(P_2)$ via $G$.

Hence, it follows that any two bounding pair maps associated with ordered subcollections of $C_g$ of the same length which intersect $a_1$ are conjugate by an element in $$\Stab_{\Mod(S_g)}(\overrightarrow{a_1}) = \langle T_{a_1}, T_{c_1}, \dots , T_{c_{g-1}}, T_{a_2}, T_{b_2}, \dots , T_{a_g},T_{b_g}\rangle,$$ and our assertion follows.
\end{proof}

\subsection{Generating $\LMod_{p_k}(S_g)$}
Following the notation from Section~\ref{sec:intro}, we obtain the next result as a direct consequence of Theorems~\ref{thm1} and~\ref{thm:gen_kernel}.
\begin{corollary}
\label{cor:gen_modsge1}
For $k \geq 2$ and $g \geq 3$, 
$$\Mod_{p_k}(S_g,e_1) = \langle \ \tilde{\S}_k \cup \S_{g,k} \cup \{T_{a_2},T_{b_2}, \ldots,T_{a_g},T_{b_g},T_{c_1},\ldots,T_{c_{g-1}}\}\rangle.$$
\end{corollary} 

\noindent Furthermore, from Theorem~\ref{thm:norm_series_gen} and Corollary~\ref{cor:gen_modsge1}, we obtain our main result. 

\begin{theorem}[Main theorem]
\label{thm:gen_modsge1}
For $k \geq 2$ and $g \geq 3$, 
$$\LMod_{p_k}(S_g) = \langle \ \S_k'' \cup \tilde{\S}_k \cup \S_{g,k} \cup \{T_{a_2},T_{b_2}, \ldots,T_{a_g},T_{b_g},T_{c_1},\ldots,T_{c_{g-1}}\}\rangle.$$
\end{theorem} 

\subsection{Generating $\SMod_{p_k}(S_{g_k})$} 
\label{sec:central}
Recalling the notation from Section~\ref{sec:intro}, let $R_{g,k} \in \Mod(S_{g_k})$ be represented by the free $2\pi/k$ rotation of $S_{g_k}$ that induces the cover $p_k$. Let $\SMod_{p_k}(S_{g_k})$ be the \textit{symmetric mapping class group} of $p_k$, which is the normalizer of $\langle R_{g,k} \rangle$ in $\Mod(S_{g_k})$. By Birman-Hilden theory, we have the following short exact sequence for $g \geq 2$:
$$1 \to \langle R_{g,k} \rangle \cong \Z_k \to \SMod_{p_k}(S_{g_k}) \to \LMod_{p_k}(S_g) \to 1.$$

By pulling back the generators of $\LMod_{p_k}(S_g)$ in Theorem~\ref{thm:gen_modsge1}, we obtain an explicit generating set $G_{g,k}$ for the normalizer $N_{\Mod(S_{g_k})}(\langle R_{g,k} \rangle)$ of $\langle R_{g,k} \rangle$ in $\Mod(S_{g_k})$. Moreover, as the centralizer $C_{\Mod(S_{g_k})}(R_{g,k})$ of $R_{g,k}$ is a finite-index subgroup of  $N_{\Mod(S_{g_k})}(\langle R_{g,k} \rangle)$, one can derive a finite generating set for $C_{\Mod(S_{g_k})}(R_{g,k})$ using $G_{g,k}$, which is in general cumbersome. In this subsection, we derive an explicit finite generating set for the centralizer $C_{\Mod(S_{g_k})}(\langle R_{g,k} \rangle)$ of $\langle R_{g,k} \rangle$ in $\Mod(S_{g_k})$ when $k=2$. 

\begin{corollary}
\label{thm:cent_gen_set}
For $g \geq 3$, consider the two-sheeted regular cover $p_2 : S_{2g-1} \to S_g$. Then: 
\begin{enumerate}[(i)]
\item $\LMod_{p_2}(S_g) = \langle \{ T_{a_1}, T_{b_1}^2, T_{a_2},T_{b_2}, \ldots,T_{a_g},T_{b_g},T_{c_1},\ldots,T_{c_{g-1}}\}\rangle$ and 
\item $\SMod_{p_2}(S_{2g-1}) = C_{\Mod(S_{2g-1})}(\langle R_{g,2}\rangle)=\langle \{R_{g,2}, T_{a_{1,1}}T_{a_{2,1}}, T_{b_{1,1}}, T_{a_{1,2}}T_{a_{2,2}}, \\ T_{b_{1,2}}T_{b_{2,2}}, \ldots,T_{a_{1,g}}T_{a_{2,g}}, T_{b_{1,g}}T_{b_{2,g}}, T_{c_{1,1}}T_{c_{2,1}}, \ldots, T_{c_{1,g-1}}T_{c_{2,g-1}} \}\rangle,$ where the Dehn twists are about the curves shown in Figure~\ref{fig:2sheetedcover} below. 
\begin{figure}[H]
		\includegraphics[width=85ex]{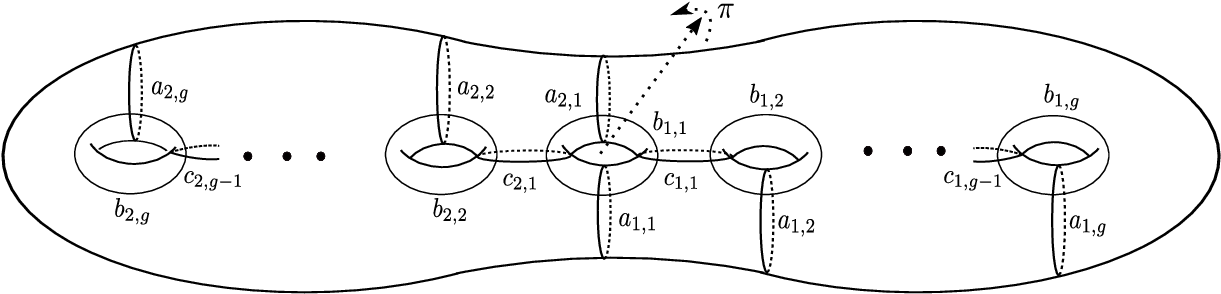}
		\caption{A rotation of $S_{2g-1}$ that induces the cover $p_2$. .}
		\label{fig:2sheetedcover}
	\end{figure}
\end{enumerate}
\end{corollary}

\begin{proof}
For the case $g \geq 3$ and $k=2$, Theorem~\ref{thm:gen_modsge1} would imply that $\LMod_{p_2}(S_g)=\Mod_{p_2}(S_g,e_1)$. So, we have
$$\LMod_{p_2}(S_g) = \langle  \tilde{\S}_2 \cup \S_{g,2} \cup \{T_{a_2},T_{b_2}, \ldots,T_{a_g},T_{b_g},T_{c_1},\ldots,T_{c_{g-1}}\}\rangle.$$ We will now follow the notation from Sections~\ref{sec:intro} and~\ref{sec:gen_lmod}. It is a well known fact that $\S_2=\langle \Psi(T_{a_1}), \Psi(T_{b_1}^2) \rangle$ (see \cite{WS}). Thus, we have $\tilde{\S}_2=\langle T_{a_1}, T_{b_1}^2 \rangle$. By applying the chain relations in $\Mod(S_g)$, we see that $F_i \in \S_{g,2}$ has the form $F_i= T_{d_{i1}}T_{d_{i2}}^{-1}$, where
$$T_{d_{i1}}T_{d_{i2}} = (T_{b_1}T_{c_1}T_{b_2}T_{c_2}\cdots T_{b_i}T_{c_i}T_{b_{i+1}})^{2i+2}=(T_{b_1}^2T_{c_1}T_{b_2}T_{c_2}\cdots T_{b_i}T_{c_i}T_{b_{i+1}})^{2i+1}.$$ Since $T_{d_{i2}}$ is conjugate to $T_{b_1}$ by an element in $\Stab_{\Mod(S_g)}(\overrightarrow{a_1})$, the assertion in (i) follows. 

To show (ii), we first observe that $\SMod_{p_2}(S_{2g-1}) = C_{\Mod(S_{2g-1})}(\langle R_{g,2}\rangle)$. Moreover, the collection of curves $\{a_1,a_2,b_2, \ldots, a_g,b_g,c_1,\ldots,c_{g-1}\}$ in $S_g$ (shown in Figure~\ref{fig:regular_cover}) pulls back under $p_2$ to the collection $\C \setminus \{b_{1,1}\}$, where $\C$ is the collection of curves in $S_{2g-1}$ shown in Figure~\ref{fig:2sheetedcover}. Thus, by pulling back the generators of $\LMod_{p_2}(S_g)$ under the epimorphism $\SMod_{p_2}(S_{2g-1}) \to \LMod_{p_2}(S_g)$, the assertion follows. 
\end{proof}
	
\section{Lifting mapping classes to the infinite ladder surface}
\label{sec:ladder}
	Let $\mathcal{L}$ be the infinite ladder surface, that is, the surface of infinite type with two ends accumulated by genus. In this section, we examine the liftable mapping classes of the regular infinite-sheeted cyclic cover $q_g :  \mathcal{L} \to S_g$  induced by an $\langle h^{g-1} \rangle$-action on $\mathcal{L}$, where $h$ is the standard handle shift map on $\mathcal{L}$. The covering map $q_g$ is illustrated in Figure~\ref{fig:ladder} below (where the `$\approx$' symbol means `homeomorphic to'). As in finite-sheeted case, $(q_g)_*(\pi_1(\L)) \lhd \pi_1(S_g)$ and is identified with the kernel of the map $\phi:\pi_1(S_g)\to \mathbb{Z}$ given by $c\xmapsto{\phi} \hat{i}(a_1,c)$. Consider the induced homomorphism $(q_g)_{\#} : H_1(\mathcal{L},\Z) \to H_1(S_g, \Z)$ given by 
\begin{eqnarray}
\label{eqn:inducedhom}
		\notag \tilde{a}_j & \xmapsto{(q_g)_{\#}} & a_{i},\text{ if } i \equiv (j-2)_{g-1}+2, \\
		\tilde{b}_j & \xmapsto{(q_g)_{\#}} & b_{i},\text{ if } i \equiv (j-2)_{g-1}+2, \\
	 \notag s_j & \xmapsto{(q_g)_{\#}} & a_{1}, \text{ if } j \equiv 0 \pmod{g-1},   
	\end{eqnarray}
	
	\noindent where for any $z,r \in \mathbb{Z}$, $r \ge 1$, $(z)_r$ denotes the unique integer in $\{0,1,\ldots, r-1\}$ such that $(z)_r \equiv z \pmod{r}$.
	
	\begin{figure}[H]
		\labellist
		
		\small
		\pinlabel $q_g$ at 171 145
		\pinlabel $\approx$ at 211 82
		\pinlabel $q_g$ at 244 145
		
		\pinlabel $a_1$ at 1 70
		\pinlabel $b_1$ at 95 94
		\pinlabel $a_2$ at 25 19
		\pinlabel $b_2$ at 70 44
		\pinlabel $a_3$ at 108 5
		\pinlabel $b_3$ at 115 51
		\pinlabel $a_{g}$ at 32 131
		\pinlabel $b_{g}$ at 71 125
		
		\pinlabel $a_1$ at 304 46
		\pinlabel $b_1$ at 326 94
		\pinlabel $a_2$ at 352 46
		\pinlabel $b_2$ at 374 94
		\pinlabel $a_3$ at 402 46
		\pinlabel $b_3$ at 426 93
		\pinlabel $a_{g}$ at 507 47
		\pinlabel $b_{g}$ at 530 94

		\pinlabel $s_{0}$ at 128 166
		\pinlabel $s_{1}$ at 194 166
		\pinlabel $s_2$ at 261 166
		\pinlabel $s_{g-1}$ at 383 166
		
		\pinlabel $h$ at 162 260
		
		\pinlabel $\tilde{a}_{2}$ at 151 181
		\pinlabel $\tilde{b}_{2}$ at 169 234
		\pinlabel $\tilde{a}_{3}$ at 218 181
		\pinlabel $\tilde{b}_{3}$ at 236 234
		\pinlabel $\tilde{a}_{g}$ at 329 181
		\pinlabel $\tilde{b}_{g}$ at 349 234
		
		\endlabellist
		\centering
		\includegraphics[width=80ex]{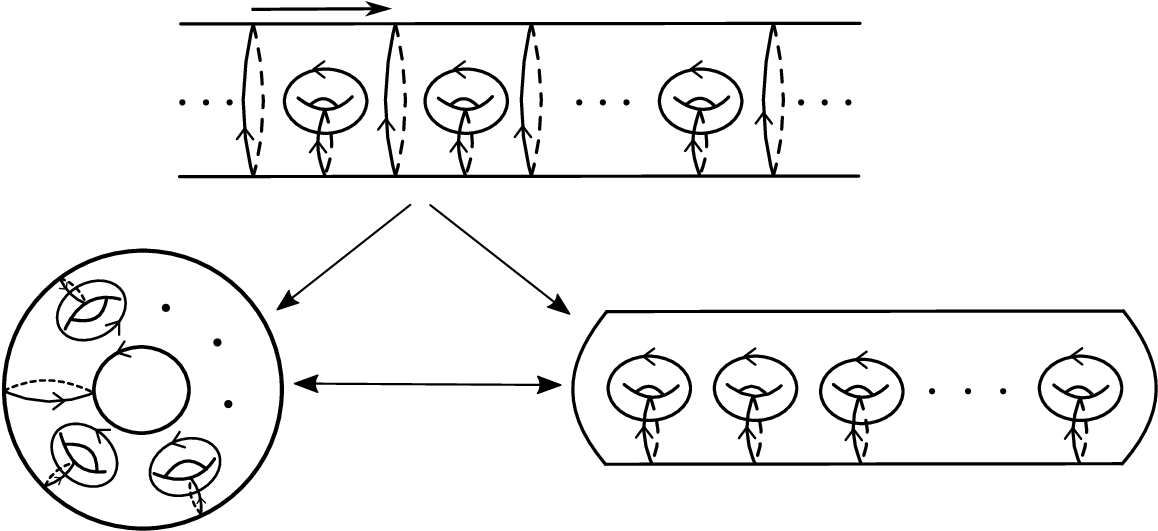}
		\caption{The homomorphism induced $q_g$.}
		\label{fig:ladder}
	\end{figure}
\noindent	Note that for $k  \geq 2$, we have $p_k \circ q_{g_k} = q_g$. This is illustrated for the case when $g=2$ and $k=3$ in Figure~\ref{example}. 
\begin{figure}[H]
		\labellist
		
		\small
		\pinlabel $q_2$ at 237 159
		\pinlabel $q_4$ at 142 162
		\pinlabel $p_3$ at 186 81
		
		\pinlabel $a_1$ at 1 71
		\pinlabel $b_1$ at 76 100
		\pinlabel $a_2$ at 37 10
		\pinlabel $b_2$ at 75 43
		\pinlabel $a_3$ at 150 68
		\pinlabel $b_3$ at 106 94
		\pinlabel $a_{4}$ at 36 133
		\pinlabel $b_{4}$ at 69 128
		
		\pinlabel $a_1$ at 262 58
		\pinlabel $b_1$ at 300 100
		\pinlabel $a_2$ at 379 70
		\pinlabel $b_2$ at 339 95
		
		\pinlabel $s_{0}$ at 83 188
		\pinlabel $s_{1}$ at 152 188
		\pinlabel $s_2$ at 218 188
		\pinlabel $s_{3}$ at 288 188
		
		\pinlabel $h^3$ at 180 284
		\pinlabel $2\pi/3$ at 77 153
		
		\pinlabel $\tilde{a}_{2}$ at 119 187
		\pinlabel $\tilde{b}_{2}$ at 116 257
		\pinlabel $\tilde{a}_{3}$ at 185 187
		\pinlabel $\tilde{b}_{3}$ at 185 257
		\pinlabel $\tilde{a}_{4}$ at 255 187
		\pinlabel $\tilde{b}_{4}$ at 252 257
		
		\endlabellist
		\centering
		\includegraphics[width=55ex]{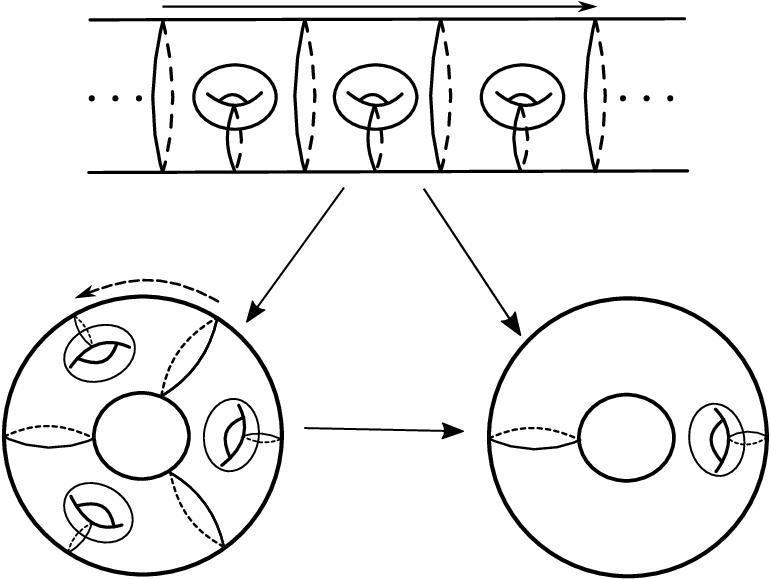}
		\caption{An illustration of $p_3 \circ q_{4} = q_2$.}
		\label{example}
	\end{figure}

\noindent We will use `$\sim$' to denote `isotopic to' and $id$ to denote the identity map. We will now show that the cover $q_g$ satisfies the Birman-Hilden property~\cite{MW}. 		
	\begin{lemma}
	\label{lem:inf_BH_cover}
Suppose that a homeomorphism $\tilde{f} : \mathcal{L} \to \mathcal{L}$ is a lift of a homeomorphism $f:S_g\to S_g$ under $q_g$. If $\tilde{f} \sim id$, then $f \sim id$. 
\end{lemma}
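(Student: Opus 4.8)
The plan is to translate the statement into a question about fundamental groups and settle it with a short centralizer computation. Since $g \geq 2$, the Dehn--Nielsen--Baer theorem identifies $\Mod^{\pm}(S_g)$ with $\mathrm{Out}(\pi_1(S_g))$, and homotopic homeomorphisms of a closed surface are isotopic; thus it suffices to show that $f_\ast \in \mathrm{Aut}(\pi_1(S_g))$ represents the trivial class in $\mathrm{Out}(\pi_1(S_g))$. I would write $G = \pi_1(S_g)$, let $\phi : G \twoheadrightarrow \Z$ be the surjection defining the regular cover $q_g$, and set $N = \ker\phi$, identified via $(q_g)_{\#}$ with $\pi_1(\mathcal{L})$; thus $G/N \cong \Z$ is the deck group $\langle h^{g-1}\rangle$. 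Because $\tilde f$ is a lift of $f$, the automorphism $f_\ast$ preserves $N$, and with compatible basepoints we have $(q_g)_{\#} \circ \tilde f_\ast = f_\ast \circ (q_g)_{\#}$.

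First I would use the hypothesis $\tilde f \sim \mathrm{id}$. An isotopy to the identity is in particular a homotopy, so $\tilde f_\ast$ is an inner automorphism of $N$; after replacing $f_\ast$ by an inner-equivalent representative of its class in $\mathrm{Out}(G)$ (conjugation by a suitable element of $N$), the intertwining relation above lets me assume $f_\ast\vert_N = \mathrm{id}_N$. The whole problem is now the purely algebraic claim: an automorphism $\alpha$ of $G$ with $\alpha\vert_N = \mathrm{id}$ must be the identity.

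To prove this claim I would first record that the centralizer $C_G(N)$ is trivial: if $h \neq 1$ commuted with every element of $N$, then the nonabelian free group $N$ would lie in $C_G(h)$, which is infinite cyclic since $G$ is a torsion-free hyperbolic group (as $g \geq 2$), a contradiction. Now $\alpha(N) = N$, so $\alpha$ descends to $\bar\alpha \in \mathrm{Aut}(\Z) = \{\pm 1\}$. Fixing $t \in G$ with $\phi(t) = 1$, so that $G = \langle N, t\rangle$, I split into cases. If $\bar\alpha = 1$ then $\alpha(t) = wt$ with $w \in N$, and comparing $\alpha(t n t^{-1}) = t n t^{-1}$ with $\alpha(t)\alpha(n)\alpha(t)^{-1} = w (t n t^{-1}) w^{-1}$ for all $n \in N$ forces $w \in C_G(N) = 1$; hence $\alpha(t) = t$ and $\alpha = \mathrm{id}$. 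If $\bar\alpha = -1$ then $\alpha(t) = w t^{-1}$ with $w \in N$, and the same comparison yields $t^{-1} w t^{-1} \in C_G(N) = 1$, i.e.\ $w = t^2$, which is absurd since $\phi(w) = 0 \neq 2 = \phi(t^2)$. Thus $\bar\alpha = 1$ and $\alpha = \mathrm{id}$, so $[f_\ast]$ is trivial in $\mathrm{Out}(G)$ and $f \sim \mathrm{id}$.

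The main obstacle is the bookkeeping around basepoints and the passage between isotopy, homotopy, and the induced outer automorphism in the infinite-type setting: one must justify carefully that $\tilde f \sim \mathrm{id}$ on $\mathcal{L}$ really yields $\tilde f_\ast \in \mathrm{Inn}(N)$ and that the intertwining relation survives the required basepoint changes. A purely geometric alternative would instead fix the filling chain $C_g = (\alpha_1, \ldots, \alpha_{2g+1})$ downstairs, observe that $\tilde f\bigl(q_g^{-1}(\alpha_i)\bigr) = q_g^{-1}(f(\alpha_i))$ is isotopic to $q_g^{-1}(\alpha_i)$ because $\tilde f \sim \mathrm{id}$, descend this to $f(\alpha_i) \sim \alpha_i$, and then invoke the Alexander method; there the delicate point is the injectivity of taking full preimages on isotopy classes of curves, which again rests on the triviality of $C_G(N)$. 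I would present the fundamental-group argument as the main line, since it sidesteps the noncompactness issues that complicate intersection-number arguments on $\mathcal{L}$.
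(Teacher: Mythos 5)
Your argument is correct, but it takes a genuinely different route from the paper's. The paper argues geometrically, in the style of the Alexander method: the curves $\alpha_3,\ldots,\alpha_{2g+1}$ lift to a chain $(\tilde{\alpha}_3,\ldots,\tilde{\alpha}_{2g+1})$ in $\mathcal{L}$, so $\tilde{f} \sim id$ gives $\tilde{f}(\tilde{\alpha}_i) \sim \tilde{\alpha}_i$, and \emph{projecting} these homotopies down (the easy direction, which avoids the full-preimage injectivity issue you flagged in your geometric alternative) yields $f(\alpha_i) \sim \alpha_i$; since the complement of a regular neighborhood of this chain is an annulus containing $\alpha_1$, one gets $f \sim T_{\alpha_1}^k$, and then an explicit computation of all lifts, $\tilde{f} = h^m \prod_{i \equiv 0 \; (\mathrm{mod}\; g-1)} T_{s_i}^k$, forces $m = k = 0$. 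Your proof instead runs through Dehn--Nielsen--Baer and reduces everything to the purely algebraic statement that an automorphism of $G = \pi_1(S_g)$ restricting to the identity on the nonabelian normal subgroup $N = \pi_1(\mathcal{L})$ with $G/N \cong \Z$ must be trivial, settled by the centralizer computation $C_G(N) = 1$ (cyclic centralizers in the torsion-free hyperbolic group $G$) and the two-case analysis over $\mathrm{Aut}(\Z) = \{\pm 1\}$; the case $\bar{\alpha} = -1$ computation ($w = t^2$ contradicting $\phi(w) = 0$) checks out, and your reduction to $f_\ast\vert_N = \mathrm{id}_N$ via conjugation by an element of $N$ is legitimate since $N$ is normal, which makes the basepoint bookkeeping you worry about benign. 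What each approach buys: the paper's proof is short, self-contained within mapping class group technology, and explicitly exhibits the lifts of the residual twist $T_{\alpha_1}^k$ (information reused in spirit elsewhere in Section 3), whereas yours is more general --- it uses nothing about $\mathcal{L}$ or the specific curve configuration beyond $\pi_1(\mathcal{L})$ being nonabelian, so it would apply verbatim to any regular cover of $S_g$ with infinite cyclic deck group and nonabelian covering subgroup, and it adapts readily to other deck groups, at the cost of importing Dehn--Nielsen--Baer and homotopy-implies-isotopy rather than arguing directly with curves.
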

	\begin{proof}
The cover $q_g$ maps the chain of curves $(\tilde{\alpha}_3, \ldots,\tilde{\alpha}_{2g+1})$ in $\L$ to the chain $({\alpha}_3, \ldots,{\alpha}_{2g+1})$ in $S_g$ as shown in Figure~\ref{fig:ladder1}. Let $\tilde{f} \in \Homeo^+(\L)$ such that $\tilde{f} \sim id$. Then for $3 \leq i \leq 2g+1$, we have $\tilde{f}(\tilde{\alpha}_i) \sim \tilde{\alpha}_i$, and so it follows that $f(\alpha_i) \sim \alpha_i$. Since the complement of an open regular neighborhood of the chain $({\alpha}_3, \ldots,{\alpha}_{2g+1})$ in $S_g$ is an annulus containing $\alpha_1$, we have that $f  \sim T_{\alpha_1}^k$, for some $k \in \Z$. Consequently, we have
 $$\tilde{f} \ \sim \ h^m\prod_{\substack{i \in \Z \\ i \equiv 0\, (\text{mod}\,{g-1})}} T_{s_i}^k,$$ for some $m \in \Z$. Since $\tilde{f} \sim id$, we have that $m=0$ and $k=0$. Thus, it follows that $f \sim id$, as required. 
\end{proof}
		
		\begin{figure}[htbp]
			\labellist
			\small
			\pinlabel $q_g$ at 170 125
			
			\pinlabel $\alpha_1$ at 53 18
			\pinlabel $\alpha_2$ at 61 63
			\pinlabel $\alpha_3$ at 89 32
			\pinlabel $\alpha_4$ at 113 63
			
			\pinlabel $\alpha_{2g+1}$ at 282 18
			\pinlabel $\alpha_{2g}$ at 300 64
			
			\pinlabel $s_{0}$ at 60 167
			\pinlabel $s_{1}$ at 127 167
			\pinlabel $s_2$ at 190 167
			\pinlabel $s_{g-1}$ at 305 167
			
			\pinlabel $h$ at 90 260
			
			\pinlabel $\tilde{\alpha}_{3}$ at 81 235
			\pinlabel $\tilde{\alpha}_{4}$ at 93 187
			\pinlabel $\tilde{\alpha}_{5}$ at 137 198
			\pinlabel $\tilde{\alpha}_{6}$ at 158 230
			\pinlabel $\tilde{\alpha}_{2g}$ at 268 231
			\pinlabel $\tilde{\alpha}_{2g+1}$ at 250 183
			
			\endlabellist
			\centering
			\includegraphics[width=65ex]{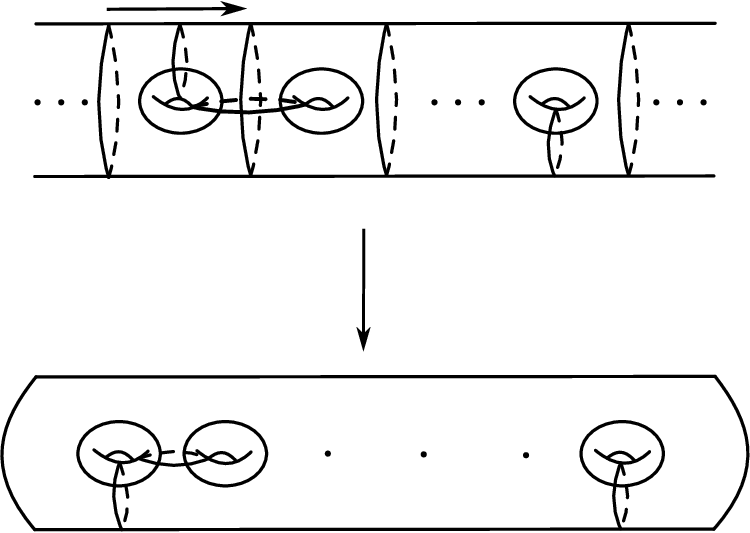}
			\caption{The image of the chain $(\tilde{\alpha}_3, \ldots,\tilde{\alpha}_{2g+1})$ under $q_g$.}
			\label{fig:ladder1}
		\end{figure}

The following is now an immediate consequence of Lemma~\ref{lem:inf_BH_cover}. 

\begin{corollary}
For $g \geq 2$, there exists a short exact sequence 
$$1 \to \langle h^{g-1} \rangle \to \SMod_{q_g}(\L) \to \LMod_{q_g}(S_g) \to 1.$$
\end{corollary}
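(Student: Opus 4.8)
The plan is to first realize the asserted sequence at the level of homeomorphisms and then push it down to mapping class groups, with Lemma~\ref{lem:inf_BH_cover} supplying the one nontrivial input. Let $\langle h^{g-1}\rangle$ denote the deck group of the regular cover $q_g$, and let $\Homeo^+_{q_g}(\L)$ be the group of orientation-preserving, fiber-preserving homeomorphisms of $\L$, so that by definition $\SMod_{q_g}(\L)$ is the image of $\Homeo^+_{q_g}(\L)$ in $\Mod(\L)$. Every $\tilde f \in \Homeo^+_{q_g}(\L)$ descends to a homeomorphism $f$ of $S_g$ with $q_g \circ \tilde f = f \circ q_g$, and this descent map is a surjection onto the group of liftable homeomorphisms of $S_g$ with kernel precisely the deck group $\langle h^{g-1}\rangle$.

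Next I would define $\rho \colon \SMod_{q_g}(\L) \to \LMod_{q_g}(S_g)$ by $\rho([\tilde f]) = [f]$, where $f$ is the descent of a fiber-preserving representative $\tilde f$. The crucial point is that $\rho$ is well defined: if $\tilde f \sim \tilde g$ are fiber-preserving with descents $f$ and $g$, then $\tilde f \tilde g^{-1}$ is fiber-preserving, isotopic to the identity, and a lift of $f g^{-1}$, so Lemma~\ref{lem:inf_BH_cover} forces $f g^{-1} \sim id$ and hence $[f] = [g]$. This is exactly where the Birman--Hilden property is indispensable. That $\rho$ is a homomorphism landing in $\LMod_{q_g}(S_g)$ is then immediate, and surjectivity follows because any liftable $f$ admits a lift $\tilde f$, which is fiber-preserving, with $\rho([\tilde f]) = [f]$.

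It remains to show $\ker \rho = \langle h^{g-1}\rangle$, where we regard $\langle h^{g-1}\rangle$ as a subgroup of $\Mod(\L)$. The inclusion $\langle h^{g-1}\rangle \subseteq \ker \rho$ holds because each deck transformation descends to the identity of $S_g$. For the reverse inclusion, suppose $\rho([\tilde f]) = id$, so the descent $f$ is isotopic to the identity; choosing an isotopy from $f$ to $id_{S_g}$ and lifting it through $q_g$ via the homotopy lifting property, starting at $\tilde f$, yields an isotopy from $\tilde f$ to a lift of $id_{S_g}$, namely a deck transformation $\sigma \in \langle h^{g-1}\rangle$. Hence $[\tilde f] = [\sigma] \in \langle h^{g-1}\rangle$, which gives exactness.

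I expect the kernel computation to be the main obstacle, for two reasons. First, one must justify lifting the isotopy along the infinite-sheeted cover $q_g$; this is legitimate, since $q_g$ is a genuine covering of manifolds and the homotopy lifting property applies verbatim, but it is the one place where the infinite-type setting warrants care. Second, one must know that distinct powers of $h^{g-1}$ represent distinct mapping classes, so that the kernel is honestly $\langle h^{g-1}\rangle \cong \Z$; this rests on the infinite order of handle shifts in $\Mod(\L)$. All remaining verifications are formal once Lemma~\ref{lem:inf_BH_cover} secures the well-definedness of $\rho$.
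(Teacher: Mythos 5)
Your proof is correct and is exactly the argument the paper intends: the paper states this corollary without proof, as an ``immediate consequence'' of Lemma~\ref{lem:inf_BH_cover}, and your write-up simply makes explicit the standard Birman--Hilden mechanism (well-definedness of the descent map via the lemma, surjectivity by definition of $\LMod_{q_g}(S_g)$, and the kernel computation by lifting an isotopy to a deck transformation in $\langle h^{g-1}\rangle$). Your two flagged points of care --- isotopy lifting along the infinite-sheeted cover and the infinite order of $h^{g-1}$ in $\Mod(\L)$ --- are both handled correctly.
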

	
\noindent Let $\displaystyle \UMod(S_g) := \bigcap_{k \geq 2} \LMod_{p_k}(S_g).$ In \cite{ADDR21}, the following  result was derived as an immediate consequence of Theorem~\ref{thm:symp_lift_crit}.
\begin{corollary}[{\cite[Corollary 2.5]{ADDR21}}]
\label{cor:umod_crit}
	Let $f \in \Mod(S_g)$, and let $\Psi(f) = (d_{ij})_{2g \times 2g}$. Then, $f \in \UMod(S_g)$ if and only if $d_{2i} = 0$, for $1\leq i\leq 2g$ and $i \neq 2$, and $d_{22}= \pm 1$.
\end{corollary}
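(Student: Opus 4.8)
The statement to prove is Corollary~\ref{cor:umod_crit}, which characterizes membership in $\UMod(S_g) = \bigcap_{k \geq 2} \LMod_{p_k}(S_g)$ purely in terms of the integral symplectic matrix $\Psi(f)$. The plan is to unwind the intersection over all $k \geq 2$ using the integral liftability criterion already established in Theorem~\ref{thm:symp_lift_crit}(ii), and then observe that an integer divisibility condition that holds for \emph{every} modulus forces the corresponding entry to vanish outright. First I would fix an arbitrary $f \in \Mod(S_g)$ and write $\Psi(f) = (d_{ij})_{2g \times 2g}$. By definition, $f \in \UMod(S_g)$ if and only if $f \in \LMod_{p_k}(S_g)$ for all $k \geq 2$ simultaneously.

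Next I would apply Theorem~\ref{thm:symp_lift_crit}, using the equivalence (i) $\Leftrightarrow$ (ii): for each fixed $k$, membership $f \in \LMod_{p_k}(S_g)$ is equivalent to the conditions that $k \mid d_{2i}$ for all $1 \leq i \leq 2g$ with $i \neq 2$, together with $\gcd(d_{22}, k) = 1$. The key step is then to quantify these conditions over all $k \geq 2$. For the divisibility part: the entries $d_{2i}$ (with $i \neq 2$) are fixed integers determined by $f$, and requiring $k \mid d_{2i}$ for \emph{every} $k \geq 2$ forces $d_{2i} = 0$, since a nonzero integer has only finitely many divisors. For the coprimality part: requiring $\gcd(d_{22}, k) = 1$ for every $k \geq 2$ rules out every prime $p$ dividing $d_{22}$ (take $k = p$), so $d_{22}$ can have no prime factors; since $\det \Psi(f) = 1$ forces $d_{22} \neq 0$, the only possibility is $d_{22} = \pm 1$. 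Conversely, if $d_{2i} = 0$ for all $i \neq 2$ and $d_{22} = \pm 1$, then the conditions of Theorem~\ref{thm:symp_lift_crit}(ii) are trivially satisfied for every $k$, giving $f \in \UMod(S_g)$.

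I do not expect a genuine obstacle here, as the argument is a clean quantifier manipulation on top of the already-proved criterion; the statement is labeled as an immediate consequence precisely because the heavy lifting lives in Theorem~\ref{thm:symp_lift_crit}. The only point requiring a moment of care is the coprimality direction: one must justify $d_{22} \neq 0$ before concluding $d_{22} = \pm 1$, and the cleanest justification is the symplectic constraint $\det \Psi(f) = 1$ (so $\Psi(f)$ is invertible and no column can be identically zero). Alternatively, one can note directly from $\gcd(d_{22}, k) = 1$ that $d_{22}$ is a unit modulo every $k$, which already excludes $d_{22} = 0$. With these observations assembled, the two implications close the biconditional and the corollary follows.
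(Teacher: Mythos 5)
Your proof is correct and matches the paper's (implicit) argument: the paper cites this as an immediate consequence of Theorem~\ref{thm:symp_lift_crit}, obtained exactly by intersecting criterion (ii) over all $k \geq 2$, which is precisely your quantifier argument. One small note: your determinant justification for $d_{22} \neq 0$ should refer to a zero \emph{row} (the second row, using the already-established vanishing of $d_{2i}$ for $i \neq 2$) rather than a zero column, but your alternative observation that $\gcd(0,k) = k \neq 1$ is the cleanest route and is airtight on its own.
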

	\noindent Using this criterion, we will now establish the following. 
	
\begin{proposition}
\label{prop:lmod_qg}
For $g \geq 2$, $\UMod(S_g) = \LMod_{q_g}(S_g)$.
\end{proposition}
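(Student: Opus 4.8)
The goal is the set equality $\UMod(S_g) = \LMod_{q_g}(S_g)$, so I would prove the two inclusions separately, and my main tool on the $\UMod$ side will be the homological criterion of Corollary~\ref{cor:umod_crit}. The first step is to understand $\LMod_{q_g}(S_g)$ homologically in a way that matches that criterion. The cover $q_g$ is the regular cyclic cover induced by $\langle h^{g-1}\rangle$, and the relation $p_k \circ q_{g_k} = q_g$ noted in the text is the bridge: a mapping class lifts through the infinite cover $q_g$ essentially when it lifts compatibly through every finite quotient cover $p_k$. So I expect $f \in \LMod_{q_g}(S_g)$ to be governed by the vanishing of the same off-diagonal homological data ($d_{2i}=0$ for $i\neq 2$, with $d_{22}=\pm1$) that Corollary~\ref{cor:umod_crit} pins down for $\UMod(S_g)$, which is what makes the two groups coincide.

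\textbf{The inclusion $\LMod_{q_g}(S_g) \subseteq \UMod(S_g)$.}
For this direction I would argue directly from the factorization $q_g = p_k \circ q_{g_k}$. If $f \in \LMod_{q_g}(S_g)$ admits a lift $\tilde f : \L \to \L$, then because $q_{g_k}:\L \to S_{g_k}$ is itself a (infinite-sheeted, but $\Z_k$-equivariant) cover fitting into the tower, one should be able to descend $\tilde f$ to a lift of $f$ under the finite cover $p_k$ for each $k \geq 2$; that is, $f \in \LMod_{p_k}(S_g)$ for all $k$, hence $f \in \bigcap_{k\geq2}\LMod_{p_k}(S_g) = \UMod(S_g)$. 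The Birman--Hilden property just established (Lemma~\ref{lem:inf_BH_cover} and its corollary) ensures the lift $\tilde f$ is well-defined up to the deck group $\langle h^{g-1}\rangle$, so the descent is unambiguous at the level of mapping classes. The care needed here is to check that the deck transformations of $q_g$ map correctly onto deck transformations of $p_k$ under the intermediate quotient, so that a lift through $q_g$ genuinely yields a lift through $p_k$; this is where the equivariance of $h$ with respect to the $\Z_k$-actions does the work.

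\textbf{The inclusion $\UMod(S_g) \subseteq \LMod_{q_g}(S_g)$.}
This is the substantive direction and where I expect the main obstacle. Here I would take $f \in \UMod(S_g)$, so by Corollary~\ref{cor:umod_crit} its symplectic image $\Psi(f)=(d_{ij})$ satisfies $d_{2i}=0$ for $i\neq 2$ and $d_{22}=\pm1$, and I must manufacture an actual lift $\tilde f:\L\to\L$. The natural strategy mirrors the proof of Lemma~\ref{lem:inf_BH_cover}: the curves $\alpha_3,\ldots,\alpha_{2g+1}$ generate the relevant homological data, and the cover $q_g$ is determined by the index-$(g-1)$ subgroup corresponding to $\langle h^{g-1}\rangle$. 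The homological condition $d_{2i}=0,\ d_{22}=\pm1$ is exactly the statement that $f_*$ preserves the subgroup of $H_1(S_g;\Z)$ defining the cover (namely the kernel of the map to $\Z$ recording intersection with the relevant dual class), which by the standard lifting criterion for covers guarantees that $f$ lifts to a homeomorphism of $\L$. The technical heart is translating the matrix condition of the corollary into the condition $f_*(q_g)_*\pi_1(\L) = (q_g)_*\pi_1(\L)$ (or its homological shadow), since $\L$ is of infinite type and $\pi_1$ is infinitely generated; I would instead phrase the lifting criterion at the level of the defining homomorphism $\pi_1(S_g)\to\Z$ and verify $f$ preserves its kernel. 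The delicate point is controlling the infinite-type behavior at the two ends accumulated by genus, but the handle-shift structure and the fact that $f$ is supported on a compact surface make the infinite product of twists appearing in Lemma~\ref{lem:inf_BH_cover} well-defined, so the lift exists and lands in $\Homeo^+(\L)$ as needed. Combining both inclusions yields the claimed equality.
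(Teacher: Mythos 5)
Your proposal is correct, and it is a hybrid: one of your two inclusions reproduces the paper's argument, while the other takes a genuinely different, purely topological route. The paper proves the entire equality homologically in one stroke: it computes $(q_g)_{\#}$ on generators to obtain $V_g := (q_g)_{\#}(H_1(\mathcal{L},\Z)) = \langle a_1,\ldots,a_g,b_2,\ldots,b_g\rangle$, identifies $\LMod_{q_g}(S_g)$ with $\Stab_{\Mod(S_g)}(V_g)$ (legitimate because the deck group is abelian, so lifting under $q_g$ is detected on $H_1$), and then verifies $\Psi(\UMod(S_g)) = \Stab_{\Sp(2g,\Z)}(V_g)$ via Corollary~\ref{cor:umod_crit}, which settles both inclusions simultaneously. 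Your second inclusion ($\UMod(S_g) \subseteq \LMod_{q_g}(S_g)$) is essentially this argument: the condition $d_{2i}=0$ for $i\neq 2$, $d_{22}=\pm 1$ says exactly that $f$ preserves $\ker(\pi_1(S_g)\to\Z)$ (the map recording the $b_1$-coefficient, i.e.\ algebraic intersection with $a_1$), and your move of phrasing the lifting criterion through this defining homomorphism is precisely the right way to sidestep the infinitely generated $\pi_1(\mathcal{L})$; do note, though, that your worries about the ends and the appeal to the infinite twist product of Lemma~\ref{lem:inf_BH_cover} are red herrings --- the covering-space lifting criterion needs only that $\mathcal{L}$ be connected and locally path-connected, and Lemma~\ref{lem:inf_BH_cover} plays no role in producing the lift (it is needed only for the Birman--Hilden exact sequence, not here). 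Your first inclusion is the genuinely different content: descending a lift $\tilde f$ through the tower $q_g = p_k \circ q_{g_k}$ to conclude $f \in \LMod_{p_k}(S_g)$ for every $k$. This works, but the justification you gesture at (``equivariance of $h$'') should be made precise: any lift $\tilde f$ under the regular cover $q_g$ conjugates the deck group $\langle h^{g-1}\rangle \cong \Z$ to itself, and the deck group $\langle h^{k(g-1)}\rangle$ of $q_{g_k}$ is the unique index-$k$ subgroup of $\Z$, hence characteristic, so $\tilde f$ descends to $S_{g_k} = \mathcal{L}/\langle h^{k(g-1)}\rangle$ and covers $f$ under $p_k$. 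What each approach buys: your tower argument proves one inclusion with no homology at all and shows the more general fact that liftability under an infinite cyclic cover implies liftability under every intermediate finite quotient, whereas the paper's single stabilizer computation is shorter and uniform; indeed your first inclusion could even be absorbed into your second, since preservation of $V_g$ over $\Z$ with $d_{22}=\pm 1$ immediately yields the mod-$k$ criterion of Theorem~\ref{thm:symp_lift_crit}(iii) for all $k$ at once.
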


\begin{proof}
From the definition of $(q_g)_{\#}$ in Equation \ref{eqn:inducedhom}, we have
$$V_g := (q_g)_{\#}(H_1(\mathcal{L},\Z)) = \langle a_1,\ldots,a_g, b_2, \ldots, b_g \rangle.$$ 
It suffices to show that $\Stab_{\Mod(S_g)}(V_g) = \UMod(S_g)$, which is equivalent to showing that 
$$\Psi(\UMod(S_g)) = \Psi(\Stab_{\Mod(S_g)}(V_g)) = \Stab_{\Sp(2g,\Z)}(V_g),$$  since $\ker \Psi = \I(S_g) < \UMod(S_g)$. But it follows immediately from Corollary~\ref{cor:umod_crit} that $\Psi(\UMod(S_g)) <  \Stab_{\Sp(2g,\Z)}(V_g)$. Now our assertion follows from the fact that an $A \in \Stab_{\Sp(2g,\Z)}(V_g)$ has to be of the form described in Corollary~\ref{cor:umod_crit}.

\end{proof}

\noindent Thus, by applying Theorem~\ref{thm1}, Proposition~\ref{prop:lmod_qg}, and \cite[Corollary 3.8]{ADDR21} we have the following corollary. 
\begin{corollary} For $g \geq 3$, 
$$\UMod(S_g) = \langle \S_{g,k}  \cup \{\iota, T_{a_1},T_{a_2},T_{b_2}, \ldots,T_{a_g},T_{b_g},T_{c_1},\ldots,T_{c_{g-1}}\} \rangle.$$
\end{corollary}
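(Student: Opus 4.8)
The plan is to realize $\UMod(S_g)$ as an extension assembled from the Torelli group, the integral stabilizer $\Mod(S_g,e_1)$, and the hyperelliptic involution, and to generate each piece in turn. By Proposition~\ref{prop:lmod_qg} we have $\UMod(S_g)=\LMod_{q_g}(S_g)=\Stab_{\Mod(S_g)}(V_g)$, and since $\ker\Psi=\I(S_g)<\UMod(S_g)$, generating $\UMod(S_g)$ reduces to (a) generating the kernel $\I(S_g)$ of $\Psi\vert_{\UMod(S_g)}$ and (b) lifting a generating set of the image $H:=\Psi(\UMod(S_g))=\Stab_{\Sp(2g,\Z)}(V_g)$. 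By Corollary~\ref{cor:umod_crit}, $H$ consists exactly of those $A=(d_{ij})\in\Sp(2g,\Z)$ whose second row is $(0,\pm1,0,\dots,0)$.

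First I would analyse $H$ via the sign of its second row. The assignment $A\mapsto d_{22}$ is a homomorphism $H\to\{\pm1\}$, since the second row of a product $AB$ equals $d_{22}(A)$ times the second row of $B$, so $d_{22}(AB)=d_{22}(A)d_{22}(B)$. Its kernel $H^{+}$ consists of the matrices with second row exactly $(0,1,0,\dots,0)$, and a short symplectic computation identifies $H^{+}=\Stab_{\Sp(2g,\Z)}(e_1)=\Psi(\Mod(S_g,e_1))$: for symplectic $A$ the condition ``second row $=e_2^{\mathsf{T}}$'' is equivalent, via $\omega(e_1,Av)=\omega(A^{-1}e_1,v)$, to $Ae_1=e_1$. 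Since $\Psi(\iota)=-I$ has second row $(0,-1,0,\dots,0)$, the class $\iota$ represents the nontrivial coset, whence $H=\langle H^{+},\Psi(\iota)\rangle$. Using $\I(S_g)\subset\Mod(S_g,e_1)$ and the surjectivity of $\Psi$, this lifts to $\UMod(S_g)=\langle\Mod(S_g,e_1),\iota\rangle$; the resulting coset description is exactly \cite[Corollary 3.8]{ADDR21}. Thus it remains to produce a generating set for $\Mod(S_g,e_1)$.

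To generate $\Mod(S_g,e_1)$ I would use the restricted symplectic sequence $1\to\I(S_g)\to\Mod(S_g,e_1)\xrightarrow{\Psi}H^{+}\to1$. The kernel is handled by Theorem~\ref{thm:gen_kernel}, which gives $\I(S_g)\subset\langle\S_{g,k}\cup\{T_{a_1},T_{a_2},T_{b_2},\dots,T_{a_g},T_{b_g},T_{c_1},\dots,T_{c_{g-1}}\}\rangle$, all of whose twists fix $a_1$ and hence lie in $\Mod(S_g,e_1)$. For the image I claim $H^{+}=\Stab_{\Sp(2g,\Z)}(e_1)$ is generated by the $\Psi$-images of these same twists. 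This is the integral analogue of Theorem~\ref{thm1}, proved by the same column-reduction, with the role of $\eta_g(\Gamma_1(k))$ now played by $\langle\Psi(T_{a_1})\rangle=\eta_g(\Stab_{\SL(2,\Z)}(e_1))=\{\eta_g\bigl(\begin{smallmatrix}1&n\\0&1\end{smallmatrix}\bigr):n\in\Z\}$. Because an element of $\Stab_{\Sp(2g,\Z)}(e_1)$ already has first column $e_1$, the difficult initial step of that proof (Lemmas~\ref{lemma1} and~\ref{lemma2}) is unnecessary, and the remaining reduction to $I$ proceeds using $\Psi(T_{a_1})$ together with the $\Psi(T_{a_i}),\Psi(T_{b_i}),\Psi(T_{c_i})$ exactly as before. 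Feeding the kernel generators and these lifts into the extension yields $\Mod(S_g,e_1)=\langle\S_{g,k}\cup\{T_{a_1},T_{a_2},T_{b_2},\dots,T_{a_g},T_{b_g},T_{c_1},\dots,T_{c_{g-1}}\}\rangle$, and adjoining $\iota$ gives the asserted generating set for $\UMod(S_g)$.

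The main obstacle is the image computation $H^{+}=\langle\Psi(T_{a_1}),\Psi(T_{a_i}),\Psi(T_{b_i}),\Psi(T_{c_i})\rangle$: one must verify that the explicit reduction used for Theorem~\ref{thm1} still terminates at $I$ once the congruence conditions are dropped and $\Gamma_1(k)$ is replaced by the single parabolic $\langle\Psi(T_{a_1})\rangle$. I expect this to be routine after the first column is normalized to $e_1$, but it requires care to confirm that clearing the remaining columns never forces a move outside $\Stab_{\Sp(2g,\Z)}(e_1)$; the absence of $\pmod{k}$ bookkeeping should in fact make the verification cleaner than in Theorem~\ref{thm1}.
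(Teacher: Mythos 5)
Your proposal is correct and takes essentially the same route as the paper, whose proof is a one-line assembly of Proposition~\ref{prop:lmod_qg}, the generating machinery behind Theorem~\ref{thm1} (together with Theorem~\ref{thm:gen_kernel} for the Torelli kernel), and the coset decomposition $\UMod(S_g)=\langle \Mod(S_g,e_1),\iota\rangle$ from \cite[Corollary 3.8]{ADDR21}. The only difference is that you explicitly verify what the paper leaves implicit --- the sign homomorphism $A\mapsto d_{22}$, the identification $H^{+}=\Stab_{\Sp(2g,\Z)}(e_1)=\Psi(\Mod(S_g,e_1))$, and the fact that the column reduction of Theorem~\ref{thm1} goes through integrally with $\eta_g(\Gamma_1(k))$ replaced by $\langle\Psi(T_{a_1})\rangle$ (indeed more easily, since the first column is already $e_1$ and the second row is then automatically $(0,1,0,\dots,0)$ by the symplectic condition, so the moves $M_4,M_5,M_6$ that leave the stabilizer are never needed) --- and these verifications are accurate.
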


The following corollary \cite[Corollary 2]{ADDR21}, which draws inspiration from \cite[Theorem 5.2.2]{TG}, is an immediate consequence of Theorem~\ref{thm:symp_lift_crit}.

\begin{corollary}
\label{cor:lcm_of_covers}
For $g \geq 1$ and $k,\ell \geq 2$, we have
$$\LMod_{p_k}(S_g) \cap \LMod_{p_{\ell}}(S_g) = \LMod_{p_d}(S_g),$$ where
$d = \text{lcm}(k,\ell)$.
\end{corollary}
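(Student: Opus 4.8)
The plan is to use the symplectic criterion of Theorem~\ref{thm:symp_lift_crit} as the sole engine, reducing the claimed identity about liftable mapping class groups to an elementary statement about divisibility of matrix entries. First I would observe that membership of $f \in \Mod(S_g)$ in $\LMod_{p_k}(S_g)$ is, by part (ii) of Theorem~\ref{thm:symp_lift_crit}, a condition imposed entirely on the entries of the integer symplectic matrix $\Psi(f) = (d_{ij})_{2g \times 2g}$: namely that $k \mid d_{2i}$ for all $1 \le i \le 2g$ with $i \neq 2$, together with $\gcd(d_{22},k) = 1$. Crucially, this condition involves only the second \emph{row} of $\Psi(f)$ and depends on $k$ only through the congruences $d_{2i} \equiv 0 \pmod{k}$ and the coprimality $\gcd(d_{22},k)=1$. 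Thus the whole problem is transported into a purely arithmetic question about a single fixed row of integers.

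Next I would translate the intersection $\LMod_{p_k}(S_g) \cap \LMod_{p_\ell}(S_g)$ into the conjunction of the two criteria. Fix $f \in \Mod(S_g)$ and write $\Psi(f) = (d_{ij})$. Then $f$ lies in the intersection if and only if, simultaneously, $k \mid d_{2i}$ and $\ell \mid d_{2i}$ for every $i \neq 2$, and both $\gcd(d_{22},k)=1$ and $\gcd(d_{22},\ell)=1$. The key elementary facts are the following two equivalences, which I would state and invoke: for an integer $n$, $(k \mid n \text{ and } \ell \mid n) \iff \mathrm{lcm}(k,\ell) \mid n$, and $(\gcd(n,k)=1 \text{ and } \gcd(n,\ell)=1) \iff \gcd(n,\mathrm{lcm}(k,\ell))=1$. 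The first is the standard characterization of the least common multiple as the generator of $k\Z \cap \ell\Z$; the second follows because a prime divides $\mathrm{lcm}(k,\ell)$ exactly when it divides $k$ or $\ell$, so $\gcd(n,\mathrm{lcm}(k,\ell))=1$ precisely when $n$ shares no prime factor with either $k$ or $\ell$.

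Applying these two equivalences entry-by-entry to the second row, with $d = \mathrm{lcm}(k,\ell)$, I would conclude that $f$ satisfies the $k$- and $\ell$-criteria together if and only if $d \mid d_{2i}$ for all $i \neq 2$ and $\gcd(d_{22},d)=1$. By Theorem~\ref{thm:symp_lift_crit}(ii) read in the reverse direction with $k$ replaced by $d$, this last condition is exactly the statement that $f \in \LMod_{p_d}(S_g)$. Hence $\LMod_{p_k}(S_g) \cap \LMod_{p_\ell}(S_g) = \LMod_{p_d}(S_g)$, as required, and the chain of equivalences is valid for all $g \ge 1$ since Theorem~\ref{thm:symp_lift_crit} holds in that range.

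I do not anticipate a genuine obstacle here, as the argument is a direct dictionary between the covering-theoretic statement and two textbook lemmas about $\mathrm{lcm}$ and coprimality. The only point requiring a modicum of care is ensuring that the equivalences are applied uniformly across all relevant entries $d_{2i}$ with $i \neq 2$ and to the coprimality of $d_{22}$ separately, and confirming that the divisibility formulation in part (ii)---rather than the mod-$k$ formulation in part (iii), which is tied to a specific modulus---is the appropriate one to use, since it is the version that composes cleanly when passing to $d = \mathrm{lcm}(k,\ell)$.
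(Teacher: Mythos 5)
Your proof is correct and follows exactly the route the paper intends: the paper states this corollary as ``an immediate consequence of Theorem~\ref{thm:symp_lift_crit}'' without spelling out the details, and your argument supplies precisely those details, namely the entry-wise application of the symplectic criterion together with the two standard equivalences $(k \mid n \text{ and } \ell \mid n) \iff \mathrm{lcm}(k,\ell) \mid n$ and $(\gcd(n,k)=\gcd(n,\ell)=1) \iff \gcd(n,\mathrm{lcm}(k,\ell))=1$. Your closing remark about preferring the divisibility formulation in part (ii) over the fixed-modulus formulation in part (iii) is a sound point of care, and nothing further is needed.
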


By Theorem~\ref{thm:norm_series_gen} and Corollary~\ref{cor:lcm_of_covers}, it follows that both $\{\LMod_{p_k}(S_g): k \in \mathbb{N}\}$ and $\{\Mod_{p_k}(S_g,e_1): k \in \mathbb{N}\}$ form directed systems of groups. Furthermore, by Corollary~\ref{cor:umod_crit}, it follows that $\varinjlim \LMod_{p_k}(S_g) = \UMod(S_g)$ and it follows by definition that $\varinjlim \Mod_{p_{k}}(S_g,e_1)$ is the stabilizer in $\Mod(S_g)$ of the vector $e_1 \in H_1(S_g,\Z)$, which we denote by $\Mod(S_g,e_1)$. Thus, by considering the fact that $\iota \in \UMod(S_g) \setminus \Mod(S_g,e_1)$, we obtain the following normal series. 

\begin{corollary}
For $g \geq 2$, there exists a normal series of $\UMod(S_g)$ in $\Mod(S_g)$ given by 
$$1 \lhd \I(S_g) \lhd \Mod(S_g,e_1) \lhd \UMod(S_g),$$ 
where the two distinct cosets of $\UMod(S_g)/\Mod(S_g,e_1)$ are represented by the elements in $\langle \iota \rangle$.
\end{corollary}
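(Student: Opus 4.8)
The plan is to verify the three consecutive normality relations in turn, the only nontrivial one being $\Mod(S_g,e_1) \lhd \UMod(S_g)$. The bottom relation $1 \lhd \I(S_g)$ is immediate, and $\I(S_g) \lhd \Mod(S_g,e_1)$ holds because $\I(S_g) = \ker \Psi$ is normal in all of $\Mod(S_g)$, hence in any subgroup containing it, such as $\Mod(S_g,e_1)$.

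For the top relation I would first record the elementary symplectic duality that underlies everything. Writing $J$ for the standard symplectic form in the basis $a_1,b_1,\ldots,a_g,b_g$, one has $Je_1 = -e_2$, $Je_2 = e_1$, and $J^{-1} = -J$, so for any $A \in \Sp(2g,\Z)$ the relation $A^{T} J A = J$ forces, for $\epsilon \in \{\pm 1\}$, the equivalence: the first column of $A$ equals $\epsilon e_1$ if and only if the second row of $A$ equals $\epsilon e_2^{T}$. Indeed, if $Ae_1 = \epsilon e_1$ then $A^{-1}e_1 = \epsilon e_1$, and $A^{T}J = J A^{-1}$ together with $Je_1 = -e_2$ gives $A^{T}e_2 = \epsilon e_2$, i.e.\ the second row is $\epsilon e_2^{T}$; the converse is symmetric.

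With this in hand I would identify the images under $\Psi$. Since $\ker \Psi = \I(S_g)$ lies in both groups and $\Psi$ is surjective, $\Psi(\Mod(S_g,e_1)) = \Stab_{\Sp(2g,\Z)}(e_1) = \{A : \text{second row} = e_2^{T}\}$, while Corollary~\ref{cor:umod_crit} gives $\Psi(\UMod(S_g)) = \{A : \text{second row} = (0,\pm 1,0,\ldots,0)\}$; in particular $\Mod(S_g,e_1) \subset \UMod(S_g)$. I would then define $\rho : \UMod(S_g) \to \{\pm 1\}$ by $\rho(f) = (\Psi(f))_{22}$, which is well defined by Corollary~\ref{cor:umod_crit} and is a homomorphism because the second row of a product of two such matrices is the product of their $(2,2)$-signs times $e_2^{T}$. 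By the duality above, $\ker \rho$ consists of exactly those $f \in \UMod(S_g)$ fixing $e_1$, so $\ker \rho = \Mod(S_g,e_1)$; and since $\Psi(\iota) = -I$ gives $\rho(\iota) = -1$, the map $\rho$ is surjective. Hence $\Mod(S_g,e_1) \lhd \UMod(S_g)$ with quotient $\Z_2$, the two cosets being $\Mod(S_g,e_1)$ and $\iota\,\Mod(S_g,e_1)$, represented by the elements $1,\iota$ of $\langle \iota \rangle$ (note $\iota^2 = 1$). Assembling the three steps yields the asserted normal series.

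I expect the main obstacle, really the only point requiring care, to be the symplectic duality above: checking cleanly that fixing the homology vector $e_1$ (a condition on the \emph{first column} of $\Psi(f)$) is equivalent to the \emph{second-row} condition $(0,\pm 1,0,\ldots,0)$ appearing in the liftability criterion of Corollary~\ref{cor:umod_crit}. Once this column/row translation is pinned down, the remaining assertions are formal consequences of the first isomorphism theorem applied to $\rho$.
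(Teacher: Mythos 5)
Your proof is correct, but it takes a genuinely different route from the paper's. The paper obtains the series as a limit of its finite-level counterpart: by Theorem~\ref{thm:norm_series_gen} there is, for each $k$, a normal series $1 \lhd \Mod(S_g)[k] \lhd \Mod_{p_k}(S_g,e_1) \lhd \LMod_{p_k}(S_g)$ with $\LMod_{p_k}(S_g)/\Mod_{p_k}(S_g,e_1) \cong \Z_k^{\times}$, and by Corollary~\ref{cor:lcm_of_covers} these groups form directed systems whose limits (intersections over all $k$) are $\I(S_g)$, $\Mod(S_g,e_1)$, and $\UMod(S_g)$, respectively; the index-two statement then drops out because the only integral units are $\pm 1$ and $\iota \in \UMod(S_g) \setminus \Mod(S_g,e_1)$. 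You instead argue directly at the integral symplectic level: your column/row duality ($Ae_1 = \epsilon e_1 \Leftrightarrow A^{T}e_2 = \epsilon e_2$, derived from $A^{T}JA = J$) translates the second-row condition of Corollary~\ref{cor:umod_crit} into the statement $f_{\#}(e_1) = \pm e_1$, and your sign homomorphism $\rho(f) = (\Psi(f))_{22}$ then delivers normality of $\Mod(S_g,e_1)$ in $\UMod(S_g)$, the $\Z_2$ quotient, and the coset representatives $1, \iota$ in one stroke via the first isomorphism theorem; the lower inclusions are handled trivially, as you note. Both arguments are sound. The paper's is shorter given the machinery already in place and exhibits $\UMod(S_g)$ structurally as the limit of the tower of liftable groups, whereas yours is self-contained, avoids the direct-limit formalism entirely, and makes the clean identification $\UMod(S_g) = \Stab_{\Mod(S_g)}(\{\pm e_1\})$ explicit; your duality lemma is, in effect, the integral analogue of the equivalence (ii)$\Leftrightarrow$(iv) in Theorem~\ref{thm:symp_lift_crit}, which you reprove from scratch rather than cite.
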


\section{Generating $\LMod_{p_2}(S_2)$}
\label{sec:lmod_p2_s2}
Let  $S_{g,n}$ denote the surface of genus $g$ with $n$ punctures (or marked points). The techniques used in Section~\ref{sec:gen_lmod} do not apply to $\LMod_{p_k}(S_2)$ since it is known~\cite{MM} that $\I(S_2)$ is not finitely generated. Let $\delta : S_g \to S_{0,2g+2}$ be the branched cover induced by the action of $\langle \iota \rangle$ on $S_g$. Let $\bar{\delta} : S_{g,2g+2} \to S_{0,2g+2}$ be the covering map obtained from $\delta$ by removing the branch points and their preimages under $\delta$, as shown in Figure~\ref{fig:B5} (for $g = 2$). 
\begin{figure}[H]
		\labellist
		\small
		\pinlabel $\beta_1$ at 514 178
		\pinlabel $\beta_2$ at 543 179
		\pinlabel $\beta_3$ at 575 179
		\pinlabel $\beta_4$ at 606 179
		\pinlabel $\beta_5$ at 638 179
		\pinlabel $\beta_6$ at 667 179
		\pinlabel $\bar{\delta}$ at 385 136
		\pinlabel $\pi$ at 310 145
		\pinlabel $\overline{a_1}$ at 82 103
		\endlabellist
		\centering
		\includegraphics[width=70ex]{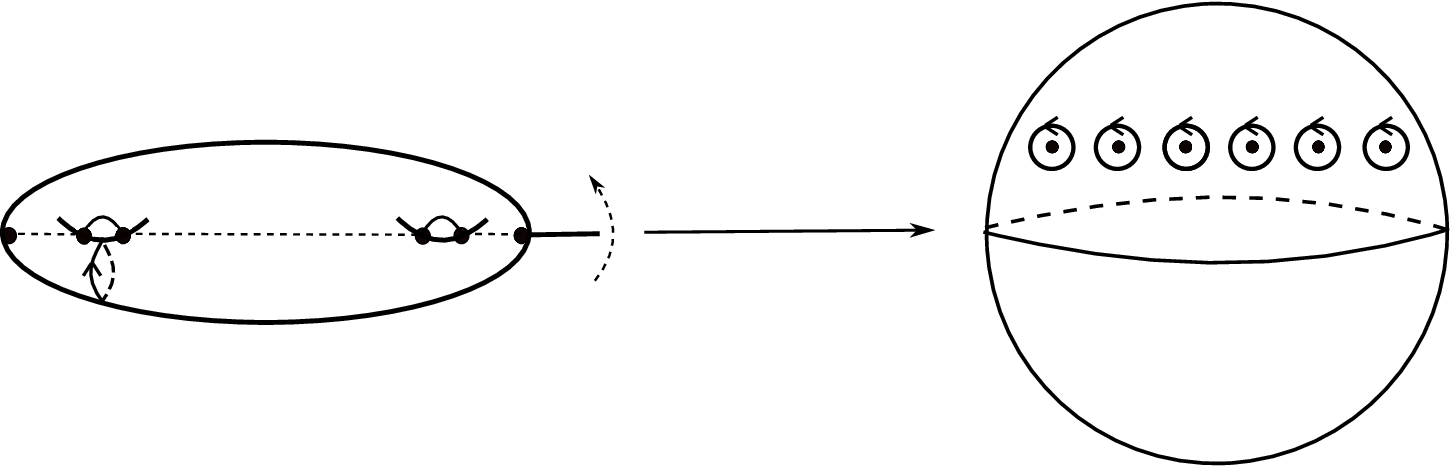}
		\caption{The hyperelliptic cover $\bar{\delta}:S_{2,6} \to S_{0,6}$.}
		\label{fig:B5}
	\end{figure}
\noindent We label the marked (branched) points of $S_{0,2g+2}$ with the numbers $1,2,\ldots,2g+2$. Let $\bar{\delta}_{\#}: H_1(S_{g,2g+2}, \Z_2) \to H_1(S_{0,2g+2},\Z_2)$ be the induced map on homology. For $1 \leq i \leq 2g+2$, let $\beta_i \in H_1(S_{0,2g+2},\Z_2)$ be represented by the curve (oriented counterclockwise) enclosing the marked point $i$ in $S_{0,2g+2}$. We will employ an alternative approach to derive a finite generating set for $\LMod_{p_2}(S_2)$ using the following classical result from Birman-Hilden theory~\cite{BH3}. 

\begin{theorem}
\label{thm:BH}
For $g \geq 2$, let $\iota \in \Mod(S_g)$ denote the hyperelliptic involution. Then following sequence is exact: 
$$1 \to \langle \iota \rangle \to C_{\Mod(S_g)}(\iota) \xrightarrow{\hat{\delta}} \Mod(S_{0,2g+2}) \to 1,$$ where $\hat{\delta}$ is induced by the hyerelliptic cover $\delta : S_g \to S_{0,2g+2}$. In particular, when $g = 2$, this sequence takes the form 
$$1 \to \langle \iota \rangle \to \Mod(S_2) \xrightarrow{\hat{\delta}} \Mod(S_{0,6}) \to 1.$$
\end{theorem}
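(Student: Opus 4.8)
The plan is to prove Theorem~\ref{thm:BH} by directly invoking the Birman–Hilden theory for the hyperelliptic cover $\delta : S_g \to S_{0,2g+2}$. The key structural fact is that $\delta$ is a two-sheeted branched cover whose deck transformation group is exactly $\langle \iota \rangle$, where $\iota$ is the hyperelliptic involution fixing the $2g+2$ Weierstrass points, which map bijectively to the marked points of $S_{0,2g+2}$. First I would recall the Birman–Hilden correspondence: for this cover the group of \emph{fiber-preserving} (symmetric) mapping classes $\SMod_\delta(S_g)$ coincides with $C_{\Mod(S_g)}(\iota)$, since a mapping class commutes with the deck transformation $\iota$ precisely when it admits a representative that descends to the quotient. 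This equality is the heart of the statement and is where I would cite~\cite{BH3}.

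With that identification in hand, the exact sequence is assembled from two pieces. The kernel of $\hat\delta$ consists of symmetric mapping classes that descend to the identity on $S_{0,2g+2}$; by the Birman–Hilden property (the branched analogue of Lemma~\ref{lem:inf_BH_cover}), any such lift must itself be isotopic to a deck transformation, so $\ker\hat\delta = \langle \iota \rangle$. Surjectivity of $\hat\delta$ onto $\Mod(S_{0,2g+2})$ follows because every homeomorphism of the sphere permuting the $2g+2$ branch points lifts to a fiber-preserving homeomorphism of $S_g$ — the obstruction to lifting lives in the cohomology controlling the branched cover, and for this specific hyperelliptic cover it vanishes, so every element of $\Mod(S_{0,2g+2})$ is realized. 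Combining exactness at the three terms yields the sequence
$$1 \to \langle \iota \rangle \to C_{\Mod(S_g)}(\iota) \xrightarrow{\hat{\delta}} \Mod(S_{0,2g+2}) \to 1.$$

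The specialization to $g = 2$ is then immediate from the fact that in genus $2$ the hyperelliptic involution is central: it is classical that $\iota$ lies in the center of $\Mod(S_2)$, so $C_{\Mod(S_2)}(\iota) = \Mod(S_2)$, and substituting this into the general sequence gives
$$1 \to \langle \iota \rangle \to \Mod(S_2) \xrightarrow{\hat{\delta}} \Mod(S_{0,6}) \to 1.$$

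I expect the main obstacle to be the careful verification of surjectivity together with the precise identification of $\SMod_\delta(S_g)$ with the centralizer $C_{\Mod(S_g)}(\iota)$; both rely on the nontrivial Birman–Hilden machinery for branched covers, which is exactly why I would quote the foundational result of~\cite{BH3} rather than reprove it here. The genus-$2$ centrality of $\iota$, while standard, is the one extra input needed for the second displayed sequence and should be stated explicitly.
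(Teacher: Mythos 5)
Your proposal is correct and takes essentially the same route as the paper, which simply quotes this as the classical Birman--Hilden result from \cite{BH3} without reproving it: the identification of the fiber-preserving classes with $C_{\Mod(S_g)}(\iota)$, the kernel being the deck group $\langle \iota \rangle$, and surjectivity via the liftability of all homeomorphisms of the marked sphere are exactly the content of the cited theorem. Your added observations --- that the normalizer of $\langle \iota \rangle$ equals the centralizer since $\iota$ has order two, and that centrality of $\iota$ in $\Mod(S_2)$ gives the $g=2$ specialization --- are accurate and consistent with how the paper uses the result.
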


\noindent Note that for $1 \leq i \leq 2g+1$, we have $\hat{\delta}(T_{\alpha_i}) = \sigma_i$, where $\sigma_i$ is the (left-handed) half-twist about the arc connecting the point $i$ with $i+1$ and the $\alpha_i$ are the chain curves shown in Figure~\ref{fig:ladder1}. 

\begin{remark}
\label{rem:hyper_ellip}
Let $I_{2g} \in \Sp(2g,\Z_k)$ be the identity matrix. Since $\Psi_k(\iota) = -I_{2g}$, it follows by definition of $\Mod_{p_k}(S_g, e_1)$ that $\iota \in \Mod_{p_k}(S_g, e_1)$ if and only if $k=2$. Hence, for $k \geq 3$, we have $C_{\Mod(S_g)}(\iota) \cap \Mod_{p_k}(S_g, e_1) \xhookrightarrow{\hat{\delta}} \Mod(S_{0,2g+2}).$ 
\end{remark} 

 \noindent Observing from Figure~\ref{fig:B5} that  $\bar{\delta}_{\#}(\overline{a_1}) = \beta_1 + \beta_2$, we make the following claim. 

\begin{proposition}
\label{lem:stab_vector}
For $g \geq 2$, we have: 
\begin{enumerate}[(i)]
\item $\hat{\delta}(\LMod_{p_2}(S_g) \cap C_{\Mod(S_g)}(\iota)) = \Stab_{\Mod(S_{0,2g+2})}(\beta_1+\beta_2),$ and
\item $\LMod_{\delta\circ p_2}(S_{0,2g+2}) = \hat{\delta}(\LMod_{p_2}(S_g) \cap C_{\Mod(S_g)}(\iota))$.
\end{enumerate}
\end{proposition}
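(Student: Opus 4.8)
I aim to prove both statements of Proposition~\ref{lem:stab_vector} by analyzing how $\hat\delta$ transports the liftability/stabilization condition for $p_2$ into the genus-zero mapping class group $\Mod(S_{0,2g+2})$. The key dictionary is the observation recorded just before the statement, namely $\delta_\#(\alpha_1) = \beta_1 + \beta_2$ in $H_1(S_{0,2g+2},\Z_2)$, together with Theorem~\ref{thm:symp_lift_crit}(iv), which identifies $\LMod_{p_2}(S_g)$ with the stabilizer in $\Mod(S_g)$ of the set $\{\ell e_1 : \ell \in \Z_2^\times\} = \{e_1\}$ (since $\Z_2^\times = \{1\}$). Thus, for $k=2$, $\LMod_{p_2}(S_g) = \Stab_{\Mod(S_g)}(e_1)$ where $e_1 = a_1 \in H_1(S_g,\Z_2)$, and by Remark~\ref{rem:hyper_ellip} the whole of $\iota$ lies in this group, so the intersection with the centralizer $C_{\Mod(S_g)}(\iota)$ is exactly $\Mod_{p_2}(S_g,e_1) \cap C_{\Mod(S_g)}(\iota)$ up to the role of $\iota$.

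\textbf{First step: the homology equivariance of $\hat\delta$.}
I would begin by making precise how an element $f \in C_{\Mod(S_g)}(\iota)$ acting on $H_1(S_g,\Z_2)$ corresponds to the action of $\hat\delta(f)$ on $H_1(S_{0,2g+2},\Z_2)$. The branched cover $\delta$ induces $\delta_\# : H_1(S_{g,2g+2},\Z_2) \to H_1(S_{0,2g+2},\Z_2)$, and because $\iota$ acts as $-I = I$ over $\Z_2$, the map $\delta_\#$ intertwines the $f$-action upstairs with the $\hat\delta(f)$-action downstairs. The crucial point is that $f$ stabilizes $e_1 = a_1 \in H_1(S_g,\Z_2)$ if and only if $\hat\delta(f)$ stabilizes $\delta_\#(a_1) = \beta_1 + \beta_2$. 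I would verify that $\delta_\#$ is compatible with these two actions by checking it on the standard generators, using that $\hat\delta(T_{\alpha_i}) = \sigma_i$ (the half-twist) as noted in the text, and that the $\sigma_i$ act on the $\beta_j$ by the standard permutation-plus-sign formulas. This establishes the inclusion $\hat\delta(\LMod_{p_2}(S_g) \cap C_{\Mod(S_g)}(\iota)) \subseteq \Stab_{\Mod(S_{0,2g+2})}(\beta_1 + \beta_2)$.

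\textbf{Second step: surjectivity onto the stabilizer.}
For the reverse inclusion in part~(i), I would use that $\hat\delta$ is surjective by Theorem~\ref{thm:BH}. Given any $\bar g \in \Stab_{\Mod(S_{0,2g+2})}(\beta_1 + \beta_2)$, lift it to some $f \in C_{\Mod(S_g)}(\iota)$ with $\hat\delta(f) = \bar g$; the two lifts differ by $\iota$, which stabilizes $e_1$, so the liftability of $f$ under $p_2$ is detected purely by the downstairs stabilization condition via the equivariance established in the first step. Hence $f \in \LMod_{p_2}(S_g)$ and the inclusion follows, giving equality in~(i). For part~(ii), I would identify $\LMod_{\delta \circ p_2}(S_{0,2g+2})$ as the group of mapping classes of $S_{0,2g+2}$ that lift through the composite cover; by functoriality of liftability, a class lifts through $\delta \circ p_2$ precisely when it lifts through $\delta$ (which is automatic on the image of $\hat\delta$) and the resulting lift to $S_g$ further lifts through $p_2$. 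This matches the right-hand side of~(i) under $\hat\delta$, yielding~(ii).

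\textbf{Main obstacle.}
The delicate point I expect to be the genuine obstacle is the precise bookkeeping of the homology equivariance in the \emph{branched} setting: $\delta$ is a branched cover, so $\delta_\#$ is defined on $H_1(S_{g,2g+2},\Z_2)$ rather than on $H_1(S_g,\Z_2)$, and I must carefully track the relationship between the marked-point homology and the closed-surface homology, ensuring the class $a_1$ really does correspond to $\beta_1 + \beta_2$ and that this correspondence is natural under the $C_{\Mod(S_g)}(\iota)$-action. Establishing this naturality — essentially verifying that the Birman-Hilden correspondence is compatible with the two homological stabilization conditions — is where the care is required; once it is in place, both inclusions and the functoriality argument for~(ii) are formal.
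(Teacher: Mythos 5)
Your plan for part (i) is essentially the paper's own proof: use the intertwining $\delta_{\#}\circ \bar f_{\#} = \hat{\delta}(\bar f)_{\#}\circ \delta_{\#}$ to get the inclusion into $\Stab_{\Mod(S_{0,2g+2})}(\beta_1+\beta_2)$, and the surjectivity of $\hat{\delta}$ from Theorem~\ref{thm:BH} (together with the fact that the two lifts differ by $\iota$, which fixes $e_1$ mod $2$) for the reverse inclusion. The obstacle you flag at the end is indeed where the real work lies, and the paper resolves it exactly as you suggest: one passes through the marked surface, choosing $\bar f \in \Mod(S_{g,2g+2})$ with $j \circ \bar f = f \circ j$, observes $\bar f_{\#}(\overline{a_1}) = \overline{a_1} + x$ with $x \in \ker(j_{\#})$, and uses $\ker(j_{\#}) = \ker(\delta_{\#}) = \langle x_i \rangle$ (the classes of loops about the marked points, which die under $\delta_{\#}$ since $\delta_{\#}(x_i) = 2\beta_i = 0$ over $\Z_2$). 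One caveat: your proposal to ``verify the equivariance on the standard generators'' via $\hat{\delta}(T_{\alpha_i}) = \sigma_i$ is not quite well-posed, since $f_{\#}$ and $\hat{\delta}(f)_{\#}$ act on different homology groups and are only related through $H_1(S_{g,2g+2},\Z_2)$; the intertwining is automatic from $\delta \circ \bar f = \hat{\delta}(f)\circ \delta$ for a symmetric representative, and the genuine content is precisely the kernel bookkeeping you identify.

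Part (ii), however, has a genuine gap. You assert ``by functoriality of liftability, a class lifts through $\delta\circ p_2$ precisely when it lifts through $\delta$ and the resulting lift to $S_g$ further lifts through $p_2$.'' The ``if'' direction is indeed formal (compose the two lifts; this is the inclusion the paper dismisses as following by definition), but the ``only if'' direction is exactly the nontrivial content of (ii) and is false for general towers of covers: a lift $\tilde f$ of $f$ to $S_{g_2}$ need only normalize the full deck group $\langle \iota, R \rangle \cong \Z_2 \times \Z_2$ of $\delta \circ p_2$ (here $R$ is the class of the free rotation $r$ inducing $p_2$), and to descend through the intermediate cover one needs $\tilde f$ to preserve the specific subgroup $\langle R \rangle$; a priori $\tilde f^{-1} R \tilde f$ could be either of the other two nontrivial elements of the deck group. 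The paper closes this gap with a fixed-point argument: $\tilde f^{-1} \circ R \circ \tilde f$ is represented by a fixed-point-free homeomorphism $\varphi$ (being a conjugate of the free map $r$), while every nontrivial element of the deck group other than $R$ is a lift of the hyperelliptic involution and has fixed points, forcing $\varphi = r$ and hence $\tilde f \in C_{\Mod(S_{g_2})}(R)$; only then does $\tilde f$ descend to an element of $\LMod_{p_2}(S_g) \cap C_{\Mod(S_g)}(\iota)$ mapping to $f$ under $\hat{\delta}$. Without this step (or an equivalent argument showing the intermediate subgroup of $\pi_1$ is characteristic among the relevant subgroups), your ``precisely when'' assumes what must be proved, so (ii) is unproven as written.
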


\begin{proof}
For $ 1\leq i \leq 2g+2$, let $x_i \in H_1(S_{g,2g+2}, \Z_2)$ be represented by a loop (oriented counterclockwise) going around the lift of the point $i \in S_{0,2g+2}$ under $\bar{\delta}$ so that 
$\bar{\delta}_{\#}(x_i) = 2\beta_i = 0$. Now consider homomorphism $j_{\#}:H_1(S_{g,2g+2},\mathbb{Z}_2)\rightarrow H_1(S_{g},\mathbb{Z}_2)$ induced by the inclusion map $j : S_{g,2g+2} \to S_g$. Then it is apparent that 
$$\ker(j_{\#})= \ker(\bar{\delta}_{\#}) = \langle\{ x_i : 1 \leq i \leq 2g+2\} \rangle.$$

Let $\phi \in \Mod_{p_2}(S_g,e_1) \cap C_{\Mod(S_g)}(\iota)$, and let $f$ be a fiber-preserving representative of $\phi$. Consider the surjective homomorphism $Forget: \Mod(S_{g,2g+2}) \to \Mod(S_g)$ that forgets the $2g+2$ marked points, induced by the inclusion $j: S_{g,2g+2} \to S_g$, and let $Forget(\bar{\phi})=\phi$. Choose a representative $\bar{f}$ for $\bar{\phi}$ such that $j\circ \bar{f}=f\circ j$. 

Since $f_{\#}(a_1)=a_1$, we have $\bar{f}_{\#}(\overline{a_1})=\overline{a_1}+x$, where  $x\in \ker(j_{\#})$ and $j_{\#}(\overline{a_1}) = a_1$ (see Figure~\ref{fig:B5}). Thus, $\bar{\delta} \circ \bar{f}=\hat{\delta}(f)\circ \bar{\delta}$, which would imply that
$$\hat{\delta}(f)_{\#}\circ \bar{\delta}_{\#}(\overline{a_1})=\bar{\delta}_{\#}\circ \bar{f}_{\#}(\overline{a_1})=\bar{\delta}_{\#}(\overline{a_1}+x),$$
and so it follows that $$(\hat{\delta}(f))_{\#}(\beta_1+\beta_2)=\beta_1+\beta_2.$$ 
The converse is just a matter of reversing this argument. This concludes the argument for (i).  

For (ii), it suffices to show that $\LMod_{\delta\circ p_2}(S_{0,2g+2}) \subset \hat{\delta}(\LMod_{p_2}(S_g) \cap C_{\Mod(S_g)}(\iota))$, for the other inclusion follows easily by definition. Let $f \in \LMod_{\delta\circ p_2}(S_{0,2g+2})$ lift to an $\tilde{f} \in N_{\Mod(S_{g_2})}(\langle \iota, R \rangle)$, where $R$ is represented by the free rotation $r$ of $S_{g_2}$ by $\pi$ inducing the cover $p_2$. Then $\tilde{f}^{-1} \circ R \circ \tilde{f} \in \langle \iota, R \rangle$, and so $\tilde{f}^{-1} \circ R \circ \tilde{f}$ is represented by a $\varphi \in \Homeo^{+}(S_{g_2})$ that has no fixed points. Thus, $\varphi = r$, and so we have $\tilde{f} \in C_{\Mod(S_{g_2})}(R)$, from which our assertion follows.
\end{proof}

\begin{theorem}
\label{thm:symm_mod_gen}
For $g \geq 2$, 
$$\LMod_{p_2}(S_g) \cap C_{\Mod(S_g)}(\iota) = \langle \iota, T_{a_1},T_{a_g}, T_{b_1}^2, T_{b_2}. \ldots T_{b_g}, T_{c_1}, \ldots, T_{c_{g-1}}\rangle.$$
\end{theorem}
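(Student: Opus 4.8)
The plan is to exploit the Birman--Hilden correspondence from Theorem~\ref{thm:BH} together with the vector-stabilizer description established in Proposition~\ref{lem:stab_vector}. Write $G := \LMod_{p_2}(S_g) \cap C_{\Mod(S_g)}(\iota)$. Since $\iota$ generates the kernel of $\hat{\delta}$ and $\iota \in G$ (by Remark~\ref{rem:hyper_ellip}), the group $G$ is determined by the short exact sequence $1 \to \langle \iota \rangle \to G \xrightarrow{\hat{\delta}} \hat{\delta}(G) \to 1$, and by Proposition~\ref{lem:stab_vector}(i) the image is $\hat{\delta}(G) = \Stab_{\Mod(S_{0,2g+2})}(\beta_1 + \beta_2)$. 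Thus it suffices to produce a generating set for this stabilizer subgroup of the punctured-sphere mapping class group, lift each generator back through $\hat{\delta}$, and then throw in $\iota$ to account for the kernel. Concretely, I would first verify that the proposed elements lie in $G$: each $T_{a_i}, T_{c_j}, T_{b_i}$ (for $i \geq 2$) acts trivially enough on $H_1(S_g,\Z_2)$ to satisfy the criterion of Theorem~\ref{thm:symp_lift_crit}(iii), while $T_{b_1}^2$ (rather than $T_{b_1}$) is forced by the $k=2$ liftability condition on the $b_1$-direction, and $\iota$ centralizes itself; this gives the inclusion ``$\supseteq$''.

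For the reverse inclusion I would compute the $\hat{\delta}$-images of the proposed generators in $\Mod(S_{0,2g+2})$ using the stated rule $\hat{\delta}(T_{\alpha_i}) = \sigma_i$ (the half-twist swapping marked points $i$ and $i+1$). Under this correspondence, the curves $a_1, c_1, a_2, c_2, \ldots$ form a chain whose twists map to the standard half-twists $\sigma_1, \sigma_2, \ldots, \sigma_{2g}$ generating $\Mod(S_{0,2g+2})$ as a quotient of the braid group, while $T_{b_1}^2$ maps to the square $\sigma^2$ of a half-twist about the arc between points $1$ and $2$ (equivalently a Dehn twist about a curve enclosing exactly points $1,2$). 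The key point is that the half-twists $\sigma_2, \ldots, \sigma_{2g}$ fix the class $\beta_1+\beta_2$ outright, the ones touching points $1,2$ preserve it only through even powers, and $T_{b_1}^2$ supplies exactly the generator needed to stay inside the stabilizer. The plan is to invoke a known presentation of $\Stab_{\Mod(S_{0,2g+2})}(\beta_1+\beta_2)$ --- or equivalently to recognize this stabilizer as the preimage under the homology-action map $\Mod(S_{0,2g+2}) \to \Sp/\mathrm{O}$-type group of the stabilizer of $\beta_1+\beta_2$ --- and check that the computed images generate it. Since the forgetful/capping arguments of Proposition~\ref{lem:stab_vector} already pin down the image as a vector stabilizer, matching generators reduces to a finite linear-algebra check over $\Z_2$ together with the observation that half-twists about arcs disjoint from the support of $\beta_1+\beta_2$ already generate the pure part.

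The main obstacle I anticipate is establishing that the $\hat{\delta}$-images genuinely generate the full stabilizer $\Stab_{\Mod(S_{0,2g+2})}(\beta_1+\beta_2)$ and not merely a proper subgroup. This requires a clean generating set for the stabilizer of a homology class in a punctured-sphere mapping class group; the natural route is to use the action of $\Mod(S_{0,2g+2})$ on the set of $\Z_2$-homology classes (equivalently on partitions of the marked points into an even subset and its complement), show that the stabilizer of $\beta_1+\beta_2$ is generated by (a) all half-twists $\sigma_i$ with $i \geq 3$ and the ``far'' twists that fix points $1,2$, (b) the element swapping $1 \leftrightarrow 2$ which fixes $\beta_1+\beta_2$, and (c) the squared half-twist $\sigma_1^2$ that moves within the stabilizer. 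Verifying that these lift to the listed Dehn twists and that no further generator is needed is the crux; everything preceding it is bookkeeping via the exact sequence and the homological liftability criterion. Once the stabilizer is generated and each generator is lifted, combining with $\iota$ closes the kernel and yields the stated equality.
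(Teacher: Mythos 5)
Your skeleton is the same as the paper's: both pass through the Birman--Hilden sequence of Theorem~\ref{thm:BH}, use Proposition~\ref{lem:stab_vector}(i) to identify the image of $\LMod_{p_2}(S_g) \cap C_{\Mod(S_g)}(\iota)$ with $\Stab_{\Mod(S_{0,2g+2})}(\beta_1+\beta_2)$, and then try to generate that stabilizer and lift generators, absorbing the kernel $\langle \iota \rangle$. But there is a genuine gap, and it sits exactly where you yourself flag ``the crux.'' You never prove that the images of the proposed generators exhaust the stabilizer; you only propose to ``invoke a known presentation'' of $\Stab_{\Mod(S_{0,2g+2})}(\beta_1+\beta_2)$, which is not a standard off-the-shelf object. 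The paper's actual work is precisely this step: it projects via $\rho : \Mod(S_{0,2g+2}) \to \Sigma_{2g+2}$, observes that $\rho(\Stab) = \langle (1,2), (3,4), \ldots, (2g+1,2g+2)\rangle$, reduces to showing $\ker \rho = \mathrm{PMod}(S_{0,2g+2})$ lies in $\langle \sigma_1, \sigma_2^2, \sigma_3, \ldots \rangle$, and verifies this on the standard generators $\tau_{ij}$ of $\mathrm{PMod}$, where the only problematic one, $\tau_{23} = \sigma_2 \sigma_1^2 \sigma_2^{-1}$, is handled by the braid-relation identity $\sigma_2 \sigma_1^2 \sigma_2^{-1} = \sigma_1^{-1} \sigma_2^2 \sigma_1$. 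Without some such computation your argument establishes only that the proposed generators lie \emph{in} the stabilizer, not that they generate it, since $\langle \sigma_1, \sigma_3, \ldots, \sigma_{2g+1} \rangle$ together with twists disjoint from the points $1,2$ is visibly a proper subgroup missing, e.g., the twist $\tau_{23}$ about a curve enclosing points $2$ and $3$.

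Moreover, your bookkeeping of the correspondence $\hat{\delta}(T_{\alpha_i}) = \sigma_i$ is factually wrong in a way that would derail the verification if carried out literally. The relevant chain is $C_g = (a_1, b_1, c_1, b_2, \ldots, b_g, a_g)$, not $(a_1, c_1, a_2, c_2, \ldots)$ --- consecutive curves in your list are disjoint, so they form no chain at all. Consequently $T_{b_1} \mapsto \sigma_2$ (the half-twist at points $2,3$), not the half-twist at points $1,2$; so $T_{b_1}^2 \mapsto \sigma_2^2$, whereas $\sigma_1^2$, which you list as the needed squared generator, is already redundant given $T_{a_1} \mapsto \sigma_1$. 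Your parity claim is likewise inverted: $\sigma_1$ \emph{does} fix $\beta_1 + \beta_2$ outright (it swaps $\beta_1 \leftrightarrow \beta_2$), while $\sigma_2$ does \emph{not} (it sends $\beta_2 \mapsto \beta_3$) and enters only through its square --- which is exactly why $T_{b_1}^2$ rather than $T_{b_1}$ appears in the generating set, and why the $\tau_{23}$ computation above is unavoidable. With $\sigma_1^2$ in place of $\sigma_2^2$, your generator list for the stabilizer generates a proper subgroup, and the ``finite linear-algebra check over $\Z_2$'' you defer to would fail.
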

\begin{proof}
Let $\delta : S_g \to S_{0,2g+2}$ be the branched cover induced by the action of $\langle \iota \rangle$ on $S_g$, and let $$\bar{\delta}_{\#}: H_1(S_{g,2g+2}, \Z_2) \to H_1(S_{0,2g+2},\Z_2)$$ be the induced map on homology.  For $1 \leq i \leq 2g+2$, let $\beta_i \in H_1(S_{0,2g+2},\Z_2)$ be represented by the curve (oriented counterclockwise) enclosing the marked point $i$ in $S_{0,2g+2}$. Then $\bar{\delta}_{\#}(\overline{a_1}) = \beta_1 + \beta_2$, and by Proposition~\ref{lem:stab_vector}, we have
$$\hat{\delta}(\Mod_{p_2}(S_g, e_1) \cap C_{\Mod(S_g)}(\iota)) = \Stab_{\Mod(S_{0,2g+2})}(\beta_1+\beta_2).$$ 

Consider the natural epimorphism $\rho: \Mod(S_{0,2g+2}) \to \Sigma_{2g+2}$, where $\Sigma_{2g+2}$ denotes the group of permutations of the points $\{1,\ldots,2g+2\}$. It is apparent that 
$$\rho(\Stab_{\Mod(S_{0,2g+2})}(\beta_1+\beta_2)) = \langle \{(1,2)\} \cup \{(3,4),(4,5),\dots, (2g+1,2g+2)\} \rangle,$$
where $(i,j) \in \Sigma_{2g+2}$ denotes the transposition of the points $i$ and $j$. Consequently, we have
$$\Stab_{\Mod(S_{0,2g+2})}(\beta_1+\beta_2) = \langle \ker  \rho  \cup \{\sigma_1\} \cup \{\sigma_3, \sigma_4, \ldots, \sigma_{2g+1}\}  \rangle.$$
\noindent Moreover, it is well known that 
$$\ker \rho = \mathrm{PMod}(S_{0,2g+2}) = \langle \{\tau_{ij} : 1\leq i < j\leq 2g+2\} \rangle,$$ where
$\tau_{ij} = (\sigma_{j-1}\dots \sigma_{i+1})\sigma_i^2(\sigma_{j-1}\dots \sigma_{i+1})^{-1}$. So, we have that
$$\tau_{ij} \in \langle \{\sigma_1,\sigma_2^2\} \cup \{\sigma_3,\sigma_4, \ldots, \sigma_{2g+1}\}\rangle,$$ whenever $\tau_{ij}$ does not contain the subword $\tau_{13} = \sigma_2\sigma_1^2 \sigma_2^{-1}$. But by repeated application of the braid relation, we can see that:
$$ \sigma_2 \sigma_1 \sigma_1 \sigma_2^{-1}=\sigma_{1}^{-1}\sigma_2 \sigma_1 \sigma_2 \sigma_1 \sigma_2^{-1} =\sigma_1^{-1}\sigma_{2} \sigma_{2} \sigma_{1} \sigma_{2} \sigma_{2}^{-1} =\sigma_{1}^{-1}\sigma_2^{2}\sigma_{1},$$ and so it follows that $\tau_{13} \in \langle \{\sigma_1,\sigma_2^2\} \cup \{\sigma_3,\sigma_4, \ldots, \sigma_{2g+1}\}\rangle.$

Since $\hat{\delta}(T_{\alpha_i}) = \sigma_i$ and $\S_2'' = \{1\}$, our assertion follows from Remark~\ref{rem:hyper_ellip}.
\end{proof}

\begin{corollary}
$\LMod_{p_2}(S_2) = \langle T_{a_1}, T_{b_1}^2, T_{c_1}, T_{a_2}, T_{b_2} \rangle$.
\end{corollary}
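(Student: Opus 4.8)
The goal is to derive the generating set for $\LMod_{p_2}(S_2)$ as an immediate specialization of Theorem~\ref{thm:symm_mod_gen} to the case $g=2$. The plan is to exploit the fact that in genus $2$ the hyperelliptic involution $\iota$ is central in $\Mod(S_2)$, so that $C_{\Mod(S_2)}(\iota) = \Mod(S_2)$, and moreover $\iota \in \Mod_{p_2}(S_2,e_1)$ by Remark~\ref{rem:hyper_ellip} (since $k=2$). First I would observe that these two facts together force $\LMod_{p_2}(S_2) \cap C_{\Mod(S_2)}(\iota) = \LMod_{p_2}(S_2)$, so Theorem~\ref{thm:symm_mod_gen} applies directly and with no loss.

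Next I would substitute $g=2$ into the generating set produced by Theorem~\ref{thm:symm_mod_gen}. The set $\{\iota, T_{a_1}, T_{a_g}, T_{b_1}^2, T_{b_2}, \ldots, T_{b_g}, T_{c_1}, \ldots, T_{c_{g-1}}\}$ becomes $\{\iota, T_{a_1}, T_{a_2}, T_{b_1}^2, T_{b_2}, T_{c_1}\}$ when $g=2$. The one piece of genuine content is then to eliminate the generator $\iota$. I would argue that $\iota$ is redundant by recalling the standard chain relation in $\Mod(S_2)$, namely that $\iota$ is expressible as a product of Dehn twists about the curves $\alpha_1,\ldots,\alpha_5$ of a chain of length $5$ (equivalently $(T_{\alpha_1}T_{\alpha_2}T_{\alpha_3}T_{\alpha_4}T_{\alpha_5})^6 = 1$ and $\iota$ arises as $(T_{\alpha_1}\cdots T_{\alpha_5})^3$). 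Since in our labeling $\{a_1, b_1, c_1, b_2, a_2\}$ is such a chain, $\iota$ lies in the subgroup generated by $T_{a_1}, T_{b_1}, T_{c_1}, T_{b_2}, T_{a_2}$; however, the available generator is $T_{b_1}^2$ rather than $T_{b_1}$, so the redundancy must be checked against the group actually generated by $\langle T_{a_1}, T_{b_1}^2, T_{c_1}, T_{a_2}, T_{b_2}\rangle$. The cleanest route is to note that $\iota$ acts as $-I_4$ on homology, hence already lies in the Torelli-coset structure controlled by the previous results, and to verify directly that $\iota$ is a product of the five listed generators using a chain-type identity that only uses an even power of $T_{b_1}$.

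The main obstacle I expect is precisely this last elimination of $\iota$: one must confirm that $\iota \in \langle T_{a_1}, T_{b_1}^2, T_{c_1}, T_{a_2}, T_{b_2}\rangle$ and not merely in the larger group $\langle T_{a_1}, T_{b_1}, T_{c_1}, T_{a_2}, T_{b_2}\rangle$ where the full chain relation lives. Here the key observation is that $T_{b_1}^2 = T_{b_1}T_{\iota(b_1)}$ is a genuine element of $\Mod_{p_2}(S_2,e_1)$ while $T_{b_1}$ is not, and that the symmetric chain relation realizing $\iota$ can be arranged to use $b_1$ only in even total degree because $\iota$ commutes with every twist and squares the relevant factors. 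Once $\iota$ is shown to be expressible in the candidate set, the generating set collapses to $\{T_{a_1}, T_{b_1}^2, T_{c_1}, T_{a_2}, T_{b_2}\}$ and the corollary follows. I would therefore organize the proof as: (1) reduce to $\LMod_{p_2}(S_2) = \LMod_{p_2}(S_2)\cap C_{\Mod(S_2)}(\iota)$ via centrality of $\iota$; (2) apply Theorem~\ref{thm:symm_mod_gen} at $g=2$; (3) delete $\iota$ using the chain relation, this being the step requiring care.
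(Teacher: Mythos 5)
Your steps (1) and (2) are exactly the paper's: centrality of $\iota$ in $\Mod(S_2)$ gives $\LMod_{p_2}(S_2)\cap C_{\Mod(S_2)}(\iota)=\LMod_{p_2}(S_2)$, and Theorem~\ref{thm:symm_mod_gen} at $g=2$ reduces everything to showing $\iota\in\langle T_{a_1},T_{b_1}^2,T_{c_1},T_{a_2},T_{b_2}\rangle$. You correctly isolate this as the crux, but you do not resolve it, and the justification you offer is not an argument. The claim that ``the symmetric chain relation realizing $\iota$ can be arranged to use $b_1$ only in even total degree because $\iota$ commutes with every twist and squares the relevant factors'' does not follow from anything: commutativity of $\iota$ with Dehn twists says nothing about the exponents of letters in a word representing it, and ``even total degree in $T_{b_1}$'' is not even a well-defined invariant in $\Mod(S_2)$ (all twists about nonseparating curves are conjugate, and $H_1(\Mod(S_2))\cong\Z_{10}$ does not distinguish $T_{b_1}$ from the other twist generators), so there is no abstract parity to appeal to --- one must exhibit a concrete word in the five candidate generators.

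The gap is fillable, and the paper fills it with an explicit identity rather than the length-five chain relation you invoke. By \cite[Lemma 3.1]{NN} one has $\iota=(T_{a_1}T_{b_1})^{3}(T_{a_2}T_{b_2})^{-3}$, and a single application of the braid relation $T_{a_1}T_{b_1}T_{a_1}=T_{b_1}T_{a_1}T_{b_1}$ to the middle of the first factor gives $(T_{a_1}T_{b_1})^{3}=T_{a_1}T_{b_1}(T_{b_1}T_{a_1}T_{b_1})T_{b_1}=(T_{a_1}T_{b_1}^{2})^{2}$, whence $\iota=(T_{a_1}T_{b_1}^{2})^{2}(T_{a_2}T_{b_2})^{-3}\in\langle T_{a_1},T_{b_1}^2,T_{a_2},T_{b_2}\rangle$, and the corollary follows. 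Note that the factor $(T_{a_2}T_{b_2})^{-3}$ poses no problem since both $T_{a_2}$ and $T_{b_2}$ are available at full (not squared) twists; the rewriting is only needed on the $a_1,b_1$ side, which is precisely why the naive expression $\iota=(T_{a_1}T_{b_1}T_{c_1}T_{b_2}T_{a_2})^{3}$ from the chain relation is the wrong starting point --- its occurrences of $T_{b_1}$ are isolated, and no analogous one-step braid rewriting is apparent. As written, your step (3) is an unproven assertion standing in for the only nontrivial content of the proof.
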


\begin{proof}
In view of Theorems~\ref{thm:BH} and~\ref{thm:symm_mod_gen}, it suffices to show that $\iota \in  \langle T_{a_1}, T_{b_1}^2, T_{c_1}, T_{a_2}, T_{b_2} \rangle$. It is known~\cite[Lemma 3.1]{NN} that $\iota \in \Mod(S_2)$ has a representation of the form $(T_{a_1}T_{b_1})^{3}(T_{a_2}T_{b_2})^{-3}$. By the braid relation, we see that
\begin{eqnarray*}
\iota & = & (T_{a_1}T_{b_1}(T_{a_1}T_{b_1}T_{a_1})T_{b_1})(T_{a_2}T_{b_2})^{-3} \\
        &  = & (T_{a_1}T_{b_1}(T_{b_1}T_{a_1}T_{b_1})T_{b_1})(T_{a_2}T_{b_2})^{-3} \\
        & = & (T_{a_1}T_{b_1}^{2})^{2}(T_{a_2}T_{b_2})^{-3} \in \langle T_{a_1}, T_{b_1}^2, T_{c_1}, T_{a_2}, T_{b_2} \rangle,
 \end{eqnarray*}       
from which our assertion follows.
\end{proof}

\begin{corollary}
For $g \geq 2$, we have $$\LMod_{\delta \circ p_2}(S_{0,2g+2}) =  \langle \hat{\delta} (\{\iota, T_{a_1},T_{a_g}, T_{b_1}^2, T_{b_2}. \ldots T_{b_g}, T_{c_1}, \ldots, T_{c_{g-1}}\}) \rangle.$$
\end{corollary}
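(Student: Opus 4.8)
The plan is to deduce this final corollary directly from the structural results already in place, treating it as a translation of Theorem~\ref{thm:symm_mod_gen} across the Birman--Hilden correspondence. By Proposition~\ref{lem:stab_vector}(ii), we have the identity $\LMod_{\delta \circ p_2}(S_{0,2g+2}) = \hat{\delta}(\LMod_{p_2}(S_g) \cap C_{\Mod(S_g)}(\iota))$, so it suffices to apply $\hat{\delta}$ to a generating set for $\LMod_{p_2}(S_g) \cap C_{\Mod(S_g)}(\iota)$ and invoke the fact that an epimorphism carries a generating set of its domain to a generating set of its image.

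First I would recall the generating set established in Theorem~\ref{thm:symm_mod_gen}, namely
$$\LMod_{p_2}(S_g) \cap C_{\Mod(S_g)}(\iota) = \langle \iota, T_{a_1},T_{a_g}, T_{b_1}^2, T_{b_2}, \ldots, T_{b_g}, T_{c_1}, \ldots, T_{c_{g-1}}\rangle.$$
Since $\hat{\delta}: C_{\Mod(S_g)}(\iota) \to \Mod(S_{0,2g+2})$ is the epimorphism from Theorem~\ref{thm:BH} and $\LMod_{p_2}(S_g) \cap C_{\Mod(S_g)}(\iota)$ sits inside its domain, the restriction of $\hat{\delta}$ to this subgroup is a surjection onto its image $\LMod_{\delta \circ p_2}(S_{0,2g+2})$. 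Applying $\hat{\delta}$ to the generators then yields
$$\LMod_{\delta \circ p_2}(S_{0,2g+2}) = \langle \hat{\delta}(\{\iota, T_{a_1},T_{a_g}, T_{b_1}^2, T_{b_2}, \ldots, T_{b_g}, T_{c_1}, \ldots, T_{c_{g-1}}\}) \rangle,$$
which is exactly the claimed statement.

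It is worth noting, for concreteness, that one can identify the images of these generators explicitly using the relation $\hat{\delta}(T_{\alpha_i}) = \sigma_i$ recorded before Remark~\ref{rem:hyper_ellip}: the hyperelliptic involution $\iota$ lies in $\ker \hat{\delta} = \langle \iota \rangle$ and so maps to the identity, while the Dehn twists about the chain curves map to half-twists or their powers. Thus the image generating set collapses to the half-twist generators $\sigma_1, \sigma_3^{\,\cdot}, \ldots$ together with $\sigma_2^2$, recovering precisely the presentation of $\Stab_{\Mod(S_{0,2g+2})}(\beta_1+\beta_2)$ analyzed in the proof of Theorem~\ref{thm:symm_mod_gen}. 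However, this simplification is not strictly necessary for the corollary as stated, since the statement only asserts that the image of the given generating set generates the target.

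The proof presents no genuine obstacle, as it is a formal consequence of surjectivity combined with the already-established Proposition~\ref{lem:stab_vector}(ii) and Theorem~\ref{thm:symm_mod_gen}; the only point requiring care is confirming that the restriction $\hat{\delta}\vert_{\LMod_{p_2}(S_g) \cap C_{\Mod(S_g)}(\iota)}$ is indeed surjective onto $\LMod_{\delta \circ p_2}(S_{0,2g+2})$, which is guaranteed precisely by the equality in Proposition~\ref{lem:stab_vector}(ii).
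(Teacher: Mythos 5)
Your proof is correct and follows exactly the route the paper intends: the corollary is an immediate consequence of Proposition~\ref{lem:stab_vector}(ii) combined with Theorem~\ref{thm:symm_mod_gen}, since a surjective homomorphism carries a generating set of its domain to a generating set of its image. Your closing observation that $\iota \in \ker\hat{\delta}$ and that the twist generators map to half-twists (so the image generating set collapses to $\sigma_1$, $\sigma_2^2$, $\sigma_3, \ldots, \sigma_{2g+1}$, matching the description of $\Stab_{\Mod(S_{0,2g+2})}(\beta_1+\beta_2)$ in the proof of Theorem~\ref{thm:symm_mod_gen}) is a correct and harmless refinement beyond what the statement requires.
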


\section*{Acknowledgments}
The authors would like to thank Prof. Dan Margalit for motivating the results in this paper and also providing some of the key ideas behind the results in Section~\ref{sec:lmod_p2_s2}. The second author was supported by the NBHM Postdoctoral Fellowship and the fourth author is supported by the SERB MATRICS grant of the Govt. of India.

\bibliographystyle{plain}
\bibliography{LMod_Gen}
	
\end{document}